\documentclass[12pt,A4,oneside,english]{amsart}
	\usepackage[hmargin=2cm,bottom=3cm,textwidth=17.3cm]{geometry}
\usepackage{amsthm}
\usepackage[hidelinks]{hyperref}
\usepackage{bbm}
\usepackage{cancel}
\newtheorem*{theorem*}{Theorem}
\newtheorem*{lemma*}{Lemma}
\newtheorem*{definition*}{Definition}
\numberwithin{equation}{section}

\usepackage{wrapfig}
\usepackage{amsmath,amsxtra}
\usepackage{amssymb}
\usepackage{xy}
\xyoption{all}
\usepackage{epsfig} \usepackage{epstopdf} 
\usepackage{paralist}

\oddsidemargin -.5cm
\evensidemargin -.5cm
\textwidth 17.3cm

\newcommand{\bra}{\langle}
\newcommand{\ket}{\rangle}
\newcommand{\quot}[1]{``#1''}

\newtheorem{fact}{Fact}[section]

\newtheorem{theorem}[fact]{Theorem}
\newtheorem{defi}[fact]{Definition}
\newtheorem{exer}[fact]{Exercise}
\newtheorem{exa}[fact]{Example}
\newtheorem{exas}[fact]{Examples}
\newtheorem{ob}[fact]{Observation}
\newtheorem{rremark}[fact]{Remark}
\newtheorem{proposition}[fact]{Proposition}
\newtheorem{corollary}[fact]{Corollary}
\newenvironment{remark}{\begin{rremark}\small \rm}{\end{rremark}}

\newenvironment{definition}{\begin{defi} \rm}{\end{defi}}
\newenvironment{example}{\begin{exa} \rm}{\renewcommand\qedsymbol{$\diamond$}\hfill\qedsymbol\end{exa}}

\newtheorem{conjecture}[fact]{Conjecture}

\newenvironment{introtheorem}[1]{\hphantom{}
	\\[.5\baselineskip]
	\noindent
	\textbf{Theorem #1.}\em
}{\smallskip}

\newcommand{\R}{{\mathbb R}}
\newcommand{\C}{{\mathbb C}}

\newcommand{\Z}{{\mathbb Z}}

\newcommand{\F}{{\mathbb F}}

\renewcommand{\P}{{\mathbb P}}

\newcommand{\al}{\alpha}
\newcommand{\ka}{\kappa}
\newcommand{\si}{\sigma}
\newcommand{\se}{\subseteq}
\newcommand{\sub}{\backslash}

\DeclareMathOperator{\Hom}{Hom}

\DeclareMathOperator{\Ker}{Ker}

\DeclareMathOperator{\codim}{codim}

\DeclareMathOperator{\GL}{GL}

\DeclareMathOperator{\Sq}{Sq}

\DeclareMathOperator{\Gr}{Gr}

\DeclareMathOperator{\gr}{Gr}

\DeclareMathOperator{\tp}{tp}

\DeclareMathOperator{\cl}{\mathcal{C}\!\ell}

\newcommand{\csm}{{\rm c}^{\rm SM}}
\newcommand{\su}{\operatorname{w}}
\newcommand{\ssm}{{\rm s}^{\rm SM}}
\newcommand{\ssu}{{\rm s}^{\rm sw}}

\renewcommand{\epsilon}{\varepsilon}

	\newcommand*\clos[1]{\overline{#1}}
	\newcommand{\PP}{\mathbb{P}}
	\newcommand{\RP}{\mathbb{R}\mathbb{P}}
	
	\newcommand{\ga}{\gamma}

	\newcommand{\la}{\lambda}
	
	\newcommand{\Si}{\Sigma}

	\newcommand{\A}{\mathcal{A}}
	\newcommand{\stb}{,\ldots,}

	\newcommand{\CP}{\C \PP}
	
	\def\iso{\cong}
	
	\def\qed{{\hfill{$\Box$}}}


	\usepackage{tikz}

\renewcommand{\cl}{\operatorname{cl}}
\newcommand{\www}{w_1,w_2,\ldots}
\newcommand{\ttt}{t_1,t_2,\ldots}
\newcommand{\tatata}{\tau_1,\tau_2,\ldots}
\newcommand{\ccc}{c_1,c_2,\ldots}
\newcommand{\ssmeta}{\ssm_\eta}
\newcommand{\bw}{\bar{w}}

\makeatletter
\renewcommand*\l@subsection{\@tocline{2}{0pt}{2.5pc}{5pc}{}}
\makeatother

\address{\'Akos K.\ Matszangosz, HUN-REN Alfr\'ed R\'enyi Institute of Mathematics, Re\'altanoda utca 13-15, 1053 Budapest, Hungary}
\email{matszangosz.akos@gmail.com}

\address{L\'aszl\'o M. Feh\'er, E\"otv\"os Lor\'and University, Budapest, Hungary}
\email{lfeher63@gmail.com}

\thanks{\'A. K. M. is supported by the Hungarian National Research, Development and Innovation Office, NKFIH PD 145995 and NKFIH K 138828.}

\title[Obstructions for Morin and fold maps]{Obstructions for Morin and fold maps: Stiefel-Whitney classes and Euler characteristics of singularity loci}
\author{L\'aszl\'o M. Feh\'er, \'Akos K. Matszangosz}
\begin{document}
\begin{abstract} 
	For a singularity type $\eta$, let the \emph{$\eta$-avoiding number} of an $n$-dimensional manifold $M$ be the lowest $k$ for which there is a map $M\to\R^{n+k}$ without $\eta$ type singular points. For instance, the case of $\eta=\Si^1$ is the case of immersions, which has been extensively studied in the case of real projective spaces. In this paper we study the $\eta$-avoiding number for other singularity types. Our results come in two levels: first we give an abstract reasoning that a non-zero cohomology class is supported on the singularity locus $\eta(f)$, proving that $\eta(f)$ cannot be empty. Second, we interpret this obstruction as a non-zero invariant of the singularity locus $\eta(f)$ for \emph{generic} $f$. The main technique that we employ is Sullivan's Stiefel-Whitney classes, which are mod 2, real analogues of the Chern-Schwartz-MacPherson (CSM) classes. We introduce the \emph{Segre-Stiefel-Whitney classes of a singularity} $\ssu_\eta$ whose lowest degree term is the mod 2 Thom polynomial of $\eta$. Using these techniques we compute some universal formulas for the Euler characteristic of a singularity locus.
\end{abstract}
\subjclass[2010]{32S20, 14E15, 57R45, 58K30, 14P25}
\keywords{Stiefel-Whitney class, Thom polynomial, singularity theory, Euler characteristic of degeneracy loci, fold map, Morin map, avoiding ideal}
\maketitle
\setcounter{tocdepth}{2}

\tableofcontents
\section{Introduction}

The following question has been studied extensively \cite{Davis}: Let $M$ be an $n$-dimensional compact smooth manifold. For what $k$ does there exist an immersion of $M$ into $\R^{n+k}$? For future reference we will call the smallest such $k$ the \emph{immersion number} of $M$. For real and complex projective spaces there are several deep results, giving lower and upper bounds for the immersion number.

There is a natural generalization of this question: Let $\eta$ be a contact singularity. What is the smallest $k$ such that there is a map of $M$ into $\R^{n+k}$ with no $\eta$-points? We call this number the \emph{$\eta$-avoiding number} of $M$. For example for $\eta=\Sigma^1$, the $\eta$-avoiding number is the immersion number. 

In this paper we will study in detail two other examples: the case of \emph{cusps} $\eta=A_2$ and $\eta=\Sigma^2$. These are singularities with prototypes:
\[
A_2:(x,y)\mapsto (x^3+xy,y),\qquad \Si^2:(x,y)\mapsto (x^2,y^2).
\]
For $\eta=A_2$, maps with no $\eta$-points are called \emph{fold maps}, and for $\eta=\Sigma^2$ the maps with no $\eta$-points are called \emph{Morin maps}. We will call the corresponding $\eta$-avoiding number the \emph{fold number} of $M$, and the \emph{Morin number} of $M$, respectively.

We will give lower bounds for the fold and Morin numbers of real projective spaces using cohomological considerations. These arguments give trivial results for the immersion number, but even then they provide information on the geometry of the singularity locus. 
Using Stiefel-Whitney classes of singular spaces we will also give geometric interpretations of these lower bounds.

Our results come in two levels: First we give an abstract reasoning that a non-zero cohomology class is supported on the singularity locus $\eta(f)$, proving that $\eta(f)$ cannot be empty. Second, we interpret this obstruction as a non-zero invariant (e.g.\ odd Euler characteristic)  of the singularity locus $\eta(f)$ for \emph{generic} $f$. \\

The first level of our results is based on the notion of the \emph{stable avoiding ideal} $\mathcal{A}_\eta$ in $\F_2[w_1,w_2,\ldots]$, which is the collection of characteristic classes which are $\eta$-obstructions.
For $\eta=\Si^2$ and $\eta=A_2$, we find elements in this ideal which when evaluated on $\RP^n$ do not vanish, implying that there cannot exist an $\eta$-avoiding map in a given codimension.
 
The second level of our results is based on the Stiefel-Whitney class associated to a real algebraic variety (and more generally to certain stratified submanifolds), which is an extension of the mod 2 fundamental class. It contains higher degree terms, which can be interpreted --analogously to the Ohmoto-Aluffi theorem in the complex case---as the Euler characteristic of generic linear slices for subvarieties of $\RP^n$ (Theorem \ref{thm:real-aluffi}).

We define and compute \emph{Segre-Stiefel-Whitney} (Segre-SW) \emph{classes of contact singularities}, which is the real analogue of Ohmoto's Segre-Schwartz-MacPherson (Segre-SM) Thom polynomials. The lowest degree term of the Segre-SW class is the (mod 2) Thom polynomial of the singularity, which is classical and computes the fundamental class of the locus. Since both the Thom polynomial and the Segre-SW classes are elements in the avoiding ideal, these provide obstructions for the existence of $\eta$-maps.

Segre-SW classes of singularities can be explicitly computed using a Borel-Haefliger type theorem, which connects Segre-SM and Segre-SW classes, see Theorem \ref{thm:ssw}, \eqref{item:bh}. For instance, by results of Parusi\'nski-Pragacz \cite{ParusinskiPragacz} and Rim\'anyi \cite{tpp} on Segre-SM classes, we get explicit formulas for the Segre-SW classes of the $\Si^i$ and contact singularity types.

In particular, by computing such Segre-SW classes of singularity loci for maps $\RP^n\to \R^N$, we give formulas which express the Euler characteristic of the singularity locus. A typical result is the following:
\begin{introtheorem}{\ref{thm:euler-of-sigma2}}
		Assume that the binary expansion of $n$ contains no consecutive 1's. Let $p$ be the largest number, such that $2^p$ divides $n$, and assume that $p\geq 1$. Then for a generic smooth map $f:\RP^n\to \R^{n+l}$, where $l=n/2-2^{p-1}-1$ the locus $\Si^2(f)$ is a smooth manifold of dimension $d=2^{p}-2$ of odd Euler characteristic. In particular it is nonempty, and for $p\geq 2$ unorientable.
	\end{introtheorem}		

Our results suggest that there is a strong connection between Morin and fold maps. We show that the avoiding ideals of $\Si^2(l)$ and $A_2(l+1)$ are equal (Corollary \ref{cor:a2-si2}). In fact, we observe that their Segre-SW classes are equal in the range where we can compute both. Based on these observations we formulate a conjecture on a relationship between these two singularities, see Conjecture \ref{conj:fold_Morin}.

Finally, using these computations, it is possible to give universal formulas for the Euler characteristic of a singularity locus $\eta$, which we call its \emph{characteristic series} $\chi_\eta$. In \cite{SaekiSakuma1999}, Saeki and Sakuma show that the mod 2 Euler characteristic of the cusp points of a Morin map $M^4\to \R^4$ is equal to $\chi(M)$, the mod 2 Euler characteristic of $M$:
\begin{equation}\label{eq:chiA2_intro}
\chi(\clos{A_2}(M^4\to \R^4))=\tau_4[M]=\int_M w_4(M).
\end{equation}
 Using Segre-SW classes of singularity loci, we obtain similar results for generic maps. For instance, a typical result is the following (see Theorem \ref{thm:closure-cusp}):
	\[	\chi_{\clos{A_2}(0)}=\tau_2+0+\tau_4+0+(\tau_{4}\tau_2+\tau_3\tau_2\tau_1+\tau_6)+0+(\tau_4\tau_2^2 + \tau_6\tau_2 + \tau_4^2 + \tau_8)+\ldots\]
	Here the term $\tau_4$ corresponds to \eqref{eq:chiA2_intro}. For the precise formalism, see Section \ref{sec:eulerchar}.

These methods are useful for finding obstructions for $\eta$-maps between general source and target manifolds, we give some examples in Section \ref{sec:singularity}.

\subsection{Results and structure of the paper}

In Sections \ref{sec:SegreSullivan} to \ref{sec:avoiding} we establish the tools that we use to give bounds on the fold and Morin numbers of real projective spaces in the second half of the paper.

In Section \ref{sec:SegreSullivan}, we review the main properties of Stiefel-Whitney and Segre-Stiefel-Whitney classes of real algebraic varieties, and give a topological  interpretation of the coefficients of Stiefel-Whitney classes of a real subvariety of $\RP^n$ (Theorem \ref{thm:real-aluffi}). 

In Section \ref{sec:degloci} we define our main tool, the Segre-SW class of a real contact singularity and prove its main properties. We also give an effective method to calculate them. 

In Section \ref{sec:avoiding} we discuss a family of cohomological obstructions associated to a contact singularity, called its stable avoiding ideal.

In Section \ref{sec:obstructions_fold_morin} we  calculate lower bounds to the fold and Morin numbers of $\RP^n$ using their Thom polynomials and using  the corresponding avoiding ideals. In Theorem \ref{k=t+1} we show that the lower bound coming from the avoiding ideal is better (by 1) if the binary expansion of n contains no consecutive 1’s.

In Section \ref{sec:Sigmai} we interpret the lower bounds of Section \ref{sec:obstructions_fold_morin} as the Euler characteristic of the $\Si^2$-locus, see Theorem \ref{thm:euler-of-sigma2}. 

In Section \ref{sec:fold_vs_morin} we study the Euler characteristic of $A_2$-locus and formulate a conjectural relationship between fold and Morin maps. 

In Section \ref{sec:singularity} we compute Segre-SW classes of contact singularities $\eta$ other than $A_2$, and give universal formulas for the Euler characteristic of the $\eta$-locus of generic maps to Euclidean space. 

Finally, in Section \ref{sec:hierarchy} we study the relationship between the different obstructions considered in this paper and conclude with some further examples.

\textbf{Acknowledgements.} We are grateful to Rich\'ard Rim\'anyi for several discussions and for sharing with us his results \cite{tpp}, as well as informing us about the results of Brandyn Lee. We would like to thank J\"org Sch\"urmann for carefully explaining to us the proof of the transversal pullback property of Theorem \ref{thm:generic-pullback}. We thank Tam\'as Terpai for explaining his results to us, as well as for suggesting Example \ref{ex:DoldWu}. We thank Toru Ohmoto and Andrzej Weber for several discussions about the topics of this paper.

\section{Stiefel-Whitney classes of real algebraic varieties}\label{sec:SegreSullivan}
\subsection{Stiefel-Whitney classes and Euler characteristics}
Sullivan showed in \cite{sullivan} that the Stiefel-Whitney classes of a smooth manifold can be extended to real algebraic varieties. In particular, given a real algebraic subvariety $Z$ of the algebraic manifold $X$, there are classes $\su(Z)\in H^*(X;\F_2)$ with the property that
\begin{equation}\label{eq:su}
	\su(Z\se X)=[Z]+\ldots +\chi_2(Z)\cdot [pt].
\end{equation}
The main property that we will use in this paper is that the top degree term is the modulo 2 Euler characteristic $\chi_2(Z)$. 

Stiefel-Whitney classes can be also regarded as transformations from constructible functions to cohomology, which satisfy the Deligne-Grothendieck axioms, see \cite{fu-mccrory}.

\begin{remark}
	Sullivan's Stiefel-Whitney classes were defined as homology classes of real algebraic varieties, dual to the Stiefel-Whitney classes in the smooth case. Whitney \cite{Whitney1940} in his thesis defined these homology classes combinatorially for smooth manifolds, which were later rediscovered and developed by Cheeger \cite{Cheeger1969}. Sullivan and Akin \cite{sullivan}, \cite{Akin} gave a more general setting where these classes exist: a space $X$ is called \emph{Euler} if $\chi_2(X,X\sub x)=1$ for all $x\in X$. In particular, real algebraic varieties are Euler spaces as Sullivan showed \cite{sullivan}.
	
	Later, Fu and McCrory \cite{fu-mccrory} showed that these Stiefel-Whitney classes define transformations from subanalytic constructible functions to homology, and that for complex analytic varieties they are the mod 2 reductions of the Chern-Schwartz-MacPherson classes.
	
	Stiefel-Whitney homology classes are classically defined as homology classes, we will work in cohomology: via Borel-Moore homology this corresponds to the transformation $\cdot \cap [X]$.
	
	The most general spaces which admit Stiefel-Whitney classes 
	used in this paper are Whitney stratified, Euler spaces -- see \cite{sullivan} for the PL case and \cite{Trotman2020} for a survey of triangulability of Whitney stratifications. For further sources, see also \cite{McCrory}, \cite{fu-mccrory}, \cite{Parusinski}, \cite{KashiwaraSchapira}, \cite{Schurmann03}, \cite{bsy}.
\end{remark}
\begin{remark}\label{rmk:notation}
	We make a few remarks about terminology and notation. The classical terminology is \emph{Stiefel-Whitney homology classes} for the element in $H_*(Z)$ of a real algebraic variety $Z$, \cite{McCrory}, \cite{Parusinski}. We will call \emph{Stiefel-Whitney class} the corresponding cohomology class $\su(Z)\in H^*(X)$ when $Z$ is a subvariety (or, more generally, a stratified submanifold, which is also an Euler space)  of a smooth variety $X$. In notation, the Stiefel-Whitney class is $\su(Z\se X)\in H^*(X)$, and we will use the notation $w(Z)$ for the Stiefel-Whitney class for the total Stiefel-Whitney class of the tangent bundle $w(TZ)$. Notice that $\su(Z\se Z)=w(Z)$ whenever $Z$ is smooth.
\end{remark}
\begin{remark}
	We collect a few elementary facts about the modulo 2 Euler characteristic \[\chi_2(Z)=\sum_{i} \dim_{\F_2}H^i(Z;\F_2)\in \F_2\] that we will use without further mention. First, $\chi_2$ is the modulo 2 reduction of the Euler characteristic $\chi$, so the value of $\chi_2$ determines the parity of $\chi$. For an orientable compact manifold $M$ of dimension $4k+2$, $\chi_2(M)=0$ -- the parity of the Euler characteristic (by Poincar\'e duality) is the rank of $H^{2k+1}(M;\Z)$, which is even since the intersection form is symplectic.
\end{remark}

\subsection{Calculating Stiefel-Whitney classes: relation with CSM classes}
There is a wide variety of methods to calculate CSM classes. A Borel-Haefliger type theorem of Brasselet, Sch\"urmann and Yokura allows us to use these results to calculate Stiefel-Whitney classes in the algebraic case.

\begin{theorem}\cite[(0.10)]{bsy} \label{thm:bsy}
	Let $X$ be a smooth complexified variety, and let $\cl_\R$ denote Borel and Haefliger's real cycle class map \cite[5.12]{BorelHaefliger1961} from $A_\R(X)$, the Chow ring of real cycles of $X$ to the modulo 2 cohomology of the real points $X(\R)$:
	\begin{equation}\label{eq:clR}
		\cl_\R:A_\R(X)\to H^*(X(\R);\F_2),
	\end{equation}
	which maps a complexified cycle to the fundamental class of its real part. Then if $Z\se X$ is a complexified subvariety of $X$, then
	\begin{equation}\label{eq:BHw}
		\cl_\R(\csm(Z\se X))=\su(Z(\R)\se X(\R)).
	\end{equation}	
\end{theorem}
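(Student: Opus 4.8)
The plan is to recognise both sides of \eqref{eq:BHw} as the image of a characteristic function under a natural transformation from constructible functions to (mod $2$) Borel--Moore homology, and to use that such transformations are pinned down by additivity, compatibility with proper pushforward, and their values on smooth varieties. Write $c_*\colon\mathcal{F}(-)\to A_*(-)$ for MacPherson's transformation in its refinement to Chow groups, so that $\csm(Z\se X)$ is the pushforward to $X$ of $c_*(\mathbbm{1}_Z)$ (pushforward of constructible functions being Euler characteristic of complex fibres), a class lying in $A_\R(X)$ since $Z$ is defined over $\R$; and write $w_*\colon\mathcal{F}(-)\to H^{\mathrm{BM}}_*(-;\F_2)$ for the Fu--McCrory transformation (pushforward of constructible functions being compactly supported Euler characteristic of real fibres), so that $\su(Z(\R)\se X(\R))$ is the pushforward to $X(\R)$ of $w_*(\mathbbm{1}_{Z(\R)})$. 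Both transformations are additive, commute with proper pushforward, and on a smooth $M$ take the value $c_*(\mathbbm{1}_M)=c(TM)\cap[M]$, resp.\ $w_*(\mathbbm{1}_M)=w(TM)\cap[M]$. The goal is to show that $Z\mapsto\cl_\R(\csm(Z\se X))$ has the three properties characterising $Z(\R)\mapsto\su(Z(\R)\se X(\R))$.

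Next I would assemble the properties of Borel and Haefliger's real cycle map \cite{BorelHaefliger1961} that drive the argument: $\cl_\R$ is a codimension-preserving ring homomorphism compatible with cap products, it sends $[X]$ to $[X(\R)]$, it commutes with proper pushforward along morphisms of complexified varieties (here one also uses that $f(\R)$ is proper whenever $f$ is), and $\cl_\R(c_i(E))=w_i(E(\R))$ for every algebraic vector bundle $E$. The last two facts give the normalisation directly: for smooth $X$,
\[
\cl_\R\bigl(\csm(X\se X)\bigr)=\cl_\R\bigl(c(TX)\cap[X]\bigr)=w(TX(\R))\cap[X(\R)]=\su(X(\R)\se X(\R)).
\]
Additivity of $\cl_\R\circ\csm$ in $Z$, in the sense of $\mathbbm{1}_{Z_1\cup Z_2}=\mathbbm{1}_{Z_1}+\mathbbm{1}_{Z_2}-\mathbbm{1}_{Z_1\cap Z_2}$, follows from additivity of $c_*$ and $\F_2$-linearity of $\cl_\R$, and compatibility with proper pushforward follows from the corresponding properties of $c_*$ and $\cl_\R$ --- modulo the one point taken up in the next paragraph.

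That point is the mismatch between the complex pushforward, which records the Euler characteristic of complex fibres, and the real one, which records the compactly supported Euler characteristic of real fibres: for a proper $f\colon Y\to X$ over $\R$ and $x\in X(\R)$ one needs $\chi\bigl(f^{-1}(x)(\C)\bigr)\equiv\chi_c\bigl(f(\R)^{-1}(x)\bigr)\pmod 2$. This holds because for any complexified variety $V$ the set $V(\C)\setminus V(\R)$ carries a free action of complex conjugation, hence has even $\chi_c$, so $\chi(V(\C))=\chi_c(V(\C))\equiv\chi_c(V(\R))\pmod 2$; applying this to the fibres $V=f^{-1}(x)$ reconciles the two pushforwards. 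Granting this and the normalisation, the proof finishes by noetherian induction on $Z$: by Hironaka's resolution of singularities, which can be chosen functorially and therefore $\operatorname{Gal}(\C/\R)$-equivariantly, one writes $\mathbbm{1}_Z=\sum_j\epsilon_j\,(f_j)_*\mathbbm{1}_{M_j}$ with the $M_j$ smooth complexified varieties, $f_j\colon M_j\to X$ proper over $\R$ and $\epsilon_j=\pm1$ (the signs irrelevant mod $2$); restricting to real points and applying additivity, proper-pushforward compatibility and the smooth normalisation of $\cl_\R\circ\csm$, and then reading the same three properties of $w_*$ backwards, gives $\cl_\R(\csm(Z\se X))=\su(Z(\R)\se X(\R))$.

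I expect the main obstacle to be exactly this reconciliation of the complex and real pushforwards under $\cl_\R$, together with the attendant bookkeeping of degrees --- matching the non-doubling real cycle map against Chow codimension and Borel--Moore degree, and checking that $\cl_\R$ really does intertwine MacPherson's transformation with the Fu--McCrory transformation rather than merely agreeing on algebraic cycles. Once those compatibilities are established --- they are essentially built into Borel--Haefliger's construction together with the free-conjugation Euler characteristic identity above --- the remainder is the standard ``agree on generators via resolution of singularities'' scheme.
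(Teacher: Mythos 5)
This theorem is not proved in the paper at all: it is imported verbatim from Brasselet--Sch\"urmann--Yokura \cite[(0.10)]{bsy}, so there is no internal proof to compare yours against. Your reconstruction follows what is essentially the standard (and intended) argument and looks sound: reduce by resolution over $\R$ to classes of the form $(f_j)_*(c(TM_j)\cap[M_j])$ with $M_j$ smooth and $f_j$ proper and defined over $\R$, use the Borel--Haefliger compatibilities of $\cl_\R$ (proper pushforward by the usual mod~$2$ degree argument, Chern classes of real algebraic bundles going to Stiefel--Whitney classes, $[X]\mapsto[X(\R)]$), the smooth normalizations of the MacPherson and Fu--McCrory transformations, and --- the genuinely essential point, which you correctly isolate and prove --- the congruence $\chi(V(\C))\equiv\chi_c(V(\R))\bmod 2$ coming from the free conjugation action on $V(\C)\setminus V(\R)$, which reconciles the complex and real pushforwards of constructible functions. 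The only things worth stating explicitly are that MacPherson's transformation must be taken over the base field $\R$ so that $\csm(Z\se X)$ really lies in $A_\R(X)$ (your resolution decomposition in fact exhibits this, since it writes the class as a sum of manifestly real cycles), and that since the $f_j$ are proper the fibre Euler characteristics on both sides are of compact sets, so no $\chi$ versus $\chi_c$ discrepancy survives.
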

\begin{remark} Chern-Schwartz-MacPherson and Stiefel-Whitney classes are \emph{motivic invariants}, therefore they extend to \emph{constructible sets} (where constructible is meant in the appropriate category - complex algebraic \cite{bsy}, subanalytic \cite{fu-mccrory}, etc.). On constructible set we simply mean the difference $Z\setminus Y$, where $Y$ is a subvariety of $Z$.  Then $\csm((Z\setminus Y)\se X)=\csm(Z\se X)-\csm(Y\se X)$ for the complex case and $\su((Z\setminus Y)\se X)=\su(Z\se X)-\su(Y\se X)$.
	
\end{remark}

\subsection{Segre-Stiefel-Whitney classes: the transversal pullback property}

A strongly related notion is \emph{the Segre-Stiefel-Whitney class $\ssu(Z)\in H^*(X)$} - this is related to the Stiefel-Whitney class via the relation
\begin{equation}\label{eq:ssu_su}
	\su(Z\se X)=\ssu(Z\se X)\cdot w(X).
\end{equation}

Segre-Stiefel-Whitney classes satisfy the \emph{transversal pullback property}, see \cite{Matsui1988} and \cite{Schurmann2017} for the PL and the algebraic category respectively.
This property will be crucial in our geometric applications. We will be working in the smooth category, namely we are mainly interested in obstructions for the existence of smooth maps $f:\RP^n\to \R^m$ avoiding given singularities.

\begin{theorem}\label{thm:generic-pullback}
	Let $N$ be a real smooth algebraic variety and $Z\subset N$ be a subvariety. Let $f:M\to N$ be a generic smooth map from a smooth manifold $M$. Then $f^{-1}(Z)$ has Stiefel-Whitney classes and the \emph{Segre}-Stiefel-Whitney classes satisfy
	\begin{equation}\label{eq:pullback}
		f^*\ssu(Z\subset N)=\ssu(f^{-1}(Z)\subset M).
	\end{equation}
\end{theorem}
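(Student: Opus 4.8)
The plan is to reduce the smooth statement to the known algebraic (or PL) transversal pullback property by a genericity/transversality argument, following the approach of Matsui and Sch\"urmann but in the smooth category. First I would fix a Whitney stratification $\mathcal{S}$ of $N$ compatible with $Z$, so that each stratum is a smooth submanifold and $Z$ is a union of strata; since $N$ is a real algebraic variety this can be taken to be a semialgebraic Whitney stratification. The key point is that ``generic'' should mean: $f$ is transverse to every stratum of $\mathcal{S}$ (and, where needed, to the strata of relevant normal slices). By Thom transversality this is an open dense condition on $C^\infty(M,N)$, so the statement is about such $f$. Under this transversality, $f^{-1}(Z)$ inherits a Whitney stratification with strata $f^{-1}(S\cap Z)$, and -- crucially -- it is again an Euler space: the local Euler condition $\chi_2(f^{-1}Z, f^{-1}Z\setminus x)=1$ follows because transversality makes a neighborhood of $x$ in $(M, f^{-1}Z)$ locally a product of a disk with a normal slice of $(N,Z)$ at $f(x)$, and $\chi_2$ is multiplicative under products while the normal-slice Euler condition holds since $Z$ is Euler. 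Hence $\ssu(f^{-1}(Z)\subset M)$ is defined.

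Next I would verify the pullback formula \eqref{eq:pullback} itself. The clean way is to express both sides as pullbacks of Stiefel-Whitney transformations applied to constructible functions: write $\su$ (equivalently $\ssu$, via \eqref{eq:ssu_su}) as the Deligne--Grothendieck transformation $w_*$ on constructible functions, and recall the normalization $w_*(\charfcn_Z) = \su(Z\subset N)\cap[N]$. The content is a \emph{transversal base-change} identity: for a generic $f$, one has $f^*\big(\text{(Segre form of }w_*(\charfcn_Z))\big)=\text{(Segre form of }w_* (\charfcn_{f^{-1}Z}))$. This is exactly the non-characteristic / transversal pullback property of the $w_*$-transformation. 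In the algebraic category this is Sch\"urmann's result (\cite{Schurmann2017}); in the PL category it is Matsui's (\cite{Matsui1988}). To deport it to a smooth $f:M\to N$ with $N$ algebraic, I would approximate: by the openness of the transversality condition and the density of (semi)algebraic -- or more safely, Nash -- maps in $C^\infty(M,N)$ in the strong topology (after embedding $N$ algebraically and $M$ smoothly, then perturbing and retracting), one finds a Nash, hence essentially algebraic, map $g$ that is still transverse to $\mathcal{S}$ and $C^1$-close enough to $f$ that $f^{-1}(Z)$ and $g^{-1}(Z)$ are ambient isotopic in $M$ (Thom's isotopy lemma applied to the family $f_t$ interpolating $f$ and $g$, all transverse to the stratification). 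Then $\ssu$ of the two preimages agree, $f^*=g^*$ on cohomology, and the algebraic case gives the identity for $g$, hence for $f$.

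An alternative, perhaps cleaner, route avoids approximation entirely and works directly in the smooth category using the functoriality of Stiefel-Whitney transformations on \emph{subanalytic} (indeed $C^\infty$-definable, or just Whitney-stratified Euler) spaces, as developed by Fu--McCrory \cite{fu-mccrory}: once $f$ is transverse to $\mathcal{S}$, the pullback $f^{-1}(Z)$ is a Whitney-stratified Euler space, the map $f$ restricted near $f^{-1}(Z)$ is a stratified submersion onto a neighborhood, and the non-characteristic pullback compatibility of $w_*$ under stratified submersions -- which is formal from the product structure along fibers plus the Verdier-type specialization for the Stiefel-Whitney transformation -- yields $f^*\su(Z\subset N)\cdot\text{(correction)}=\su(f^{-1}Z\subset M)$; dividing by $f^*w(N)=w(M)\cdot$(nothing, since $f$ is not a submersion) is the delicate bookkeeping. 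This is precisely why the \emph{Segre} class is the right object: the factor $w(X)$ in \eqref{eq:ssu_su} is what absorbs the discrepancy between $f^*w(N)$ and $w(M)$, so that the Segre classes transform by honest pullback even when $\dim M\neq\dim N$.

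The main obstacle I expect is the passage from the algebraic/PL transversal pullback theorems to a \emph{smooth} source map: one must carefully justify that ``generic smooth'' can be arranged to be stratified-transverse and that such an $f$ can be approximated by (or is stratified-isotopic to) an algebraic/Nash map without changing $f^{-1}(Z)$ up to homeomorphism over $M$, so that the cohomological identity transfers. Making the Euler-space property of $f^{-1}(Z)$ and the triangulability/Whitney-stratifiability precise in the smooth setting -- and checking that the $w_*$-transformation's non-characteristic pullback really is available at this level of generality -- is where the technical weight lies; everything else is formal manipulation with \eqref{eq:ssu_su} and the Deligne--Grothendieck axioms.
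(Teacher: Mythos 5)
The paper does not actually prove this theorem: it states explicitly that the proof is beyond its scope and will appear separately in \cite{FMS}, so there is no in-paper argument to compare yours against. Judged on its own terms, your proposal is a sensible outline of the expected strategy (stratify, take $f$ transverse to the strata, check the Euler property of $f^{-1}(Z)$ locally via the product structure, then invoke the Matsui/Sch\"urmann transversal pullback property), but the places you flag as ``technical weight'' are genuine gaps, not routine verifications -- they are presumably the content of the forthcoming paper. Concretely: (1) your approximation step is not available as stated. Regular (algebraic) maps $M\to N$ are not dense in $C^\infty(M,N)$, and a Nash approximation requires first putting a Nash/algebraic structure on $M$ (fine for $M$ compact via Nash--Tognoli, but not assumed here) and then knowing that Sch\"urmann's transversality theorem applies to Nash or subanalytic maps in the form you need; you cite it only in the algebraic category. (2) Even granting an approximating map $g$ with $g^{-1}(Z)$ ambient homeomorphic to $f^{-1}(Z)$ via Thom's isotopy lemma, you still need that $\su(\,\cdot\subset M)$ is unchanged under such a comparison. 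For a merely smooth $f$, the preimage $f^{-1}(Z)$ is a Whitney stratified Euler space but need not be subanalytic or carry any definable structure, and the isotopy-lemma homeomorphisms are only topological; since Stiefel--Whitney classes of singular spaces are constructed in the PL/subanalytic settings (Sullivan--Akin, Fu--McCrory), their invariance under this passage is exactly the delicate point and cannot be waved through.

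Your ``alternative, cleaner route'' has a more direct error: transversality of $f$ to the strata does not make $f$ a stratified submersion near $f^{-1}(Z)$ (the differential need not be surjective on ambient tangent spaces, only transverse to the stratum tangents), so the claimed ``product structure along fibers'' is not there, and the subsequent bookkeeping with $f^*w(N)$ versus $w(M)$ is, as you yourself half-admit, not an argument. The correct mechanism by which the Segre normalization absorbs the discrepancy is precisely what a proof must supply; asserting that the Segre class ``is the right object'' does not establish \eqref{eq:pullback}. In short: the skeleton is plausible and consistent with how the authors use the result elsewhere (e.g.\ Proposition \ref{slice}), but both routes leave the decisive steps -- the category-transfer from algebraic/PL to generic smooth maps, and the well-definedness and comparison of $\ssu$ for the non-definable preimage -- unproven.
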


The proof of this theorem is beyond the scope of this paper, we plan to publish it separately in \cite{FMS}.

\subsection{A geometric meaning of Stiefel-Whitney classes of projective varieties}
The lowest degree CSM and Stiefel-Whitney class of a variety describes its fundamental class, and its top degree term describes its Euler characteristic (modulo 2 for Stiefel-Whitney). Sullivan already asked about the rest of the Stiefel-Whitney classes \cite{sullivan}: ``\emph{One now wonders at the significance of the lower Stiefel Whitney homology classes of these analytic spaces}''. In this section we give a partial answer to the question of Sullivan, based on an analogous result of Ohmoto \cite{Ohmoto2003} and Aluffi \cite{Aluffi2013} for the CSM classes.

\begin{theorem}[Ohmoto-Aluffi] For any complex subvariety $Z$ of the complex projective space $\CP^n$ assign two vectors of integers:
	\begin{itemize}
		\item the coefficients of $[\CP^{i}]$ in the class $\csm(Z\subset \CP^n)$,
		\item the list of Euler characteristics of generic \emph{slices} $Z\cap \CP^i$.
	\end{itemize}
	Then there is a linear transformation---independent of $Z$---of determinant one, which maps the first vector to the second. (The value of $\chi(Z\cap \CP^i)$ is independent of the linear subspace $\CP^i$, assuming that it is in generic position, i.e.\ transversal to a Whitney stratification of $Z$.)
\end{theorem}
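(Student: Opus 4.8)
The plan is to combine the two structural properties of Chern--Schwartz--MacPherson classes recalled above --- additivity on constructible functions together with functoriality under proper pushforward, and the normalization that the point-class coefficient of $\csm(W\subset X)$ equals $\chi(W)$ for a subvariety $W$ of a smooth complete $X$ --- with one geometric input, a \emph{generic hyperplane section formula}. Writing $h\in H^2(\CP^n;\Z)$ for the hyperplane class, the claim is that for a hyperplane $H$ transverse to a Whitney stratification of $Z$,
\[
\csm(Z\cap H\subset\CP^n)=\frac{h}{1+h}\cap\csm(Z\subset\CP^n)
\]
(the right-hand side has vanishing top component, so it lies on $H\cong\CP^{n-1}$). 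Granting this, the theorem becomes bookkeeping with binomial coefficients.

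To establish the section formula I would first take $Z$ smooth: then $Z\cap H$ is smooth of codimension one in $Z$ with normal bundle $\mathcal{O}(1)|_{Z\cap H}$, so the adjunction sequence gives $c(T(Z\cap H))=c(TZ)|_{Z\cap H}/(1+h)$; combining this with the transversality identity $[Z\cap H]=h\cap[Z]$ and the projection formula (legitimate since $h$ is pulled back from $\CP^n$), and pushing forward to $\CP^n$, yields the displayed equality. For arbitrary $Z$ --- indeed for any constructible function in place of $\charfcn_Z$, both sides being additive --- I would induct on dimension, so it suffices to treat $\charfcn_V$ for $V\subseteq\CP^n$ closed and irreducible. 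Choosing a resolution $\rho\colon\widetilde{V}\to V$, the function $\rho_*\charfcn_{\widetilde{V}}-\charfcn_V$ is supported on a subvariety of dimension $<\dim V$, hence obeys the formula by induction; and for $\rho_*\charfcn_{\widetilde{V}}$ one uses that a generic $H$ pulls back to a generic --- hence, by Bertini, smooth --- member of the base-point-free system $|\rho^*\mathcal{O}(1)|$ on $\widetilde{V}$, that restriction of constructible functions commutes with the proper pushforward by $\rho$, and then the smooth case on $\widetilde{V}$ together with $\rho^{*}h=c_1(\rho^*\mathcal{O}(1))$, the projection formula and functoriality. (Alternatively, this generic hyperplane section behaviour of CSM classes may be quoted from the literature.)

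For the bookkeeping: a generic $\CP^{n-k}$ is a transverse intersection of $k$ generic hyperplanes, and $\chi(Z\cap\CP^{n-k})$ is independent of the choice by the Thom--Mather isotopy lemma (this is the parenthetical assertion in the statement), so iterating the section formula gives $\csm(Z\cap\CP^{n-k}\subset\CP^n)=(h/(1+h))^{k}\cap\csm(Z\subset\CP^n)$. Write $\csm(Z\subset\CP^n)=\sum_{i=0}^{d}a_i[\CP^i]$ with $d=\dim Z$; the first vector is $(a_0,\dots,a_d)$, the generically nonempty slices are $Z\cap\CP^{n-k}$ for $k=0,\dots,d$, and the second vector is $(\chi(Z\cap\CP^{n-k}))_{k=0}^{d}$. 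Expanding $(h/(1+h))^{k}=\sum_{m\ge0}(-1)^m\binom{k+m-1}{m}h^{k+m}$ and using $h^{\ell}\cap[\CP^i]=[\CP^{i-\ell}]$, the coefficient of $[\mathrm{pt}]=[\CP^0]$ on the right-hand side is $\sum_{i\ge k}(-1)^{i-k}\binom{i-1}{k-1}a_i$, and by the normalization property it equals $\chi(Z\cap\CP^{n-k})$. Thus $\chi(Z\cap\CP^{n-k})=\sum_{i\ge k}(-1)^{i-k}\binom{i-1}{k-1}a_i$ for $k\ge 1$, and $\chi(Z)=a_0$. The matrix with $(k,i)$-entry $(-1)^{i-k}\binom{i-1}{k-1}$ for $k\ge1$ and $\delta_{0i}$ for $k=0$ is independent of $Z$, vanishes for $i<k$, and is $1$ on the diagonal; hence it is unipotent upper triangular, of determinant one (with integral inverse), and carries the first vector to the second.

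The step I expect to be the main obstacle is the generic hyperplane section formula for singular $Z$. The smooth case is a one-line adjunction computation, but the extension to all constructible functions must control how CSM classes meet hyperplane sections passing through the singular locus: one needs Bertini-type genericity so that the pulled-back hyperplane remains a smooth divisor transverse to the relevant stratification on the resolution, and one needs the lower-dimensional correction terms to be governed by the \emph{same} operator $h/(1+h)$ so that additivity closes the induction. A secondary technical point, already flagged in the statement, is the well-definedness of $\chi(Z\cap\CP^i)$ for generic $\CP^i$, which rests on the stratification and isotopy theory underlying Whitney stratifications.
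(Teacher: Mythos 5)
Your proposal is correct, but it takes a genuinely different route from the one summarized in the paper. The paper deduces the theorem from the transversal pullback property of Segre--SM classes (Ohmoto's Proposition 3.8, quoted as Proposition \ref{prop:ssm-transversality}): this gives the integral formula $\chi(Z\cap Y)=\int_N \csm(Z\subset N)\,\csm(Y\subset N)/c(TN)$, which applied to $Y=\CP^i$ produces the linear transformation, and invertibility with determinant one is settled by specializing to $Z=\CP^j$. You instead prove directly the one special case you need, the generic hyperplane-section formula $\csm(Z\cap H\subset \CP^n)=\frac{h}{1+h}\cap\csm(Z\subset\CP^n)$ --- which is precisely the transversal pullback property for the linear embedding $\CP^{n-1}\hookrightarrow\CP^n$ rewritten via the projection formula --- by adjunction in the smooth case and, in general, by resolution of singularities, additivity and proper functoriality of MacPherson's transformation, Bertini for the pulled-back linear system (strictly speaking the base-point-free subsystem of $|\rho^*\mathcal{O}(1)|$ spanned by pulled-back sections, a harmless rephrasing), and induction on the dimension of the support; you then iterate and extract the explicit matrix $(-1)^{i-k}\binom{i-1}{k-1}$, whose unipotent triangularity gives determinant one. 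Your computation checks out (e.g.\ it reproduces $\chi(\CP^{d-k})=d-k+1$ for $Z=\CP^d$), and your argument is more self-contained and explicit than the paper's, which treats the complex statement as known (Ohmoto, Aluffi) and only sketches it. What the paper's route buys is that the same quoted transversality input is exactly what carries over, via Sch\"urmann's theorem and the Borel--Haefliger comparison, to the real Stiefel--Whitney version (Theorem \ref{thm:real-aluffi}), which is the point of the section; your resolution-and-Bertini induction is specific to the complex algebraic setting and would not transfer verbatim to the mod 2 situation.
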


Therefore the CSM class of a subvariety $Z$ of $\CP^n$ carries the same information as the Euler characteristics of its intersections with generic projective subspaces. To obtain the Stiefel-Whitney version, we summarize the proof of the Ohmoto-Aluffi theorem. The proof is based on the following transversal pullback property of the Segre CSM classes (the complex analogue of \eqref{eq:pullback}):

\begin{proposition}\cite[Proposition 3.8]{Ohmoto2016} \label{prop:ssm-transversality}
	Let $N$ be a smooth complex algebraic variety and $Z\se N$ be an algebraic subset. Let $f:M\to N$ be an algebraic map to $N$ from a smooth complex algebraic variety $M$, which is transversal to a Whitney stratification of $Z$. Then
	\begin{equation*}
		f^*\ssm(Z\se N)=\ssm(f^{-1}(Z)\se M),
	\end{equation*}
	where
	\[ \ssm(Z\se N)=\frac{\csm(Z\se N)}{c(TN)}.\]
\end{proposition}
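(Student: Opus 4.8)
The plan is to factor $f$ through its graph and reduce the statement to two standard Verdier--Riemann--Roch (VRR) identities for Chern--Schwartz--MacPherson classes: one for a smooth projection and one for a closed regular embedding transversal to a Whitney stratification. Since $M$ and $N$ are smooth, Poincar\'e duality lets us regard $\csm(Z\se N)$ and $\csm(f^{-1}(Z)\se M)$ as elements of $A^*(N)$ and $A^*(M)$ respectively (so that $\csm(Y\se Y)=c(TY)$ for $Y$ smooth), and then $\ssm(Z\se N)=\csm(Z\se N)/c(TN)$ is a genuine element of $A^*(N)$, as in the statement. Because $f^*$ is a ring homomorphism and $f^*c(TN)=c(f^*TN)$, the desired equality $f^*\ssm(Z\se N)=\ssm(f^{-1}(Z)\se M)$ is equivalent to the cohomological identity
\begin{equation*}
	\csm(f^{-1}(Z)\se M)=\frac{c(TM)}{c(f^*TN)}\cdot f^*\csm(Z\se N).\tag{$\ast$}
\end{equation*}

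First I would set up the graph factorization $f=p\circ\gamma$, where $\gamma\colon M\to M\times N$, $\gamma(m)=(m,f(m))$, is the graph embedding and $p=\mathrm{pr}_N\colon M\times N\to N$ is the projection. Since $N$ is smooth, $\gamma$ is a closed regular embedding of codimension $\dim N$ with normal bundle $\nu_\gamma\cong f^*TN$, while $p$ is a smooth morphism with relative tangent bundle $\mathrm{pr}_M^*TM$; moreover $f^{-1}(Z)=\gamma^{-1}(M\times Z)$ and $p^{-1}(Z)=M\times Z$. The product formula for CSM classes (equivalently, VRR for the smooth morphism $p$) gives
\begin{equation*}
	\csm(M\times Z\se M\times N)=\mathrm{pr}_M^*c(TM)\cdot p^*\csm(Z\se N).
\end{equation*}

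Next I would bring in transversality for the embedding $\gamma$. If $\mathcal S$ is a Whitney stratification of $Z$ to which $f$ is transversal, then $\{M\times S:S\in\mathcal S\}$ is a Whitney stratification of $M\times Z$ inside $M\times N$, and a direct computation of tangent spaces shows that $\gamma\pitchfork(M\times S)$ at a point $(m,f(m))$ holds exactly because $f\pitchfork S$ at $m$; hence $\gamma$ is transversal to this stratification. Applying the transversality form of Verdier--Riemann--Roch for MacPherson's Chern class transformation (see \cite{Schurmann2017}) to the regular embedding $\gamma$ and the constructible set $M\times Z$ yields
\begin{equation*}
	\csm(f^{-1}(Z)\se M)=c(\nu_\gamma)^{-1}\cdot\gamma^*\csm(M\times Z\se M\times N).
\end{equation*}
Since $\mathrm{pr}_M\circ\gamma=\id_M$ and $p\circ\gamma=f$, pulling the product formula back along $\gamma$ gives $\gamma^*\csm(M\times Z\se M\times N)=c(TM)\cdot f^*\csm(Z\se N)$; substituting this together with $\nu_\gamma\cong f^*TN$ into the last display produces exactly identity $(\ast)$, which proves the proposition.

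The main obstacle is the ingredient invoked in the previous paragraph: for a closed regular embedding $i\colon X\hookrightarrow Y$ of smooth complex varieties transversal to a Whitney stratification of a constructible set $W\se Y$, one has $\csm(i^{-1}(W)\se X)=c(\nu_i)^{-1}\cdot i^*\csm(W\se Y)$ (the smooth check $W=Y$ fixes the sign via $i^*c(TY)=c(TX)c(\nu_i)$). This is the substantive point; I would either cite it as the standard transversality form of Verdier--Riemann--Roch for $c_*$, or sketch its proof by deformation to the normal cone --- using additivity of $c_*$ to reduce to a single smooth stratum, where transversality identifies the relevant normal data and the normalization axiom concludes. Everything here takes place in the complex algebraic category, using Chow groups (or Borel--Moore homology) and only the existence of Whitney stratifications of algebraic sets together with the stated transversality of $f$; no further genericity is required.
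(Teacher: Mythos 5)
The paper itself offers no proof of this proposition---it simply cites Ohmoto \cite[Proposition~3.8]{Ohmoto2016}---so there is no ``paper route'' to compare you against.

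Your graph factorization argument is correct and is indeed the standard way to prove statements of this type. The tangent-space check that $f\pitchfork S$ implies $\gamma\pitchfork (M\times S)$ is right (the $M$-component of $d\gamma$ already spans $T_mM$, and the $N$-component reduces to $df(T_mM)+T_{f(m)}S=T_{f(m)}N$), the product formula converts correctly from the homological cross-product statement to the cohomological form $\mathrm{pr}_M^*c(TM)\cdot p^*\csm(Z\se N)$ under Poincar\'e duality on the smooth ambient $M\times N$, and the final substitution with $\nu_\gamma\cong f^*TN$ does give $(\ast)$. You also correctly identify where all the weight lies: the transversality Verdier--Riemann--Roch formula $\csm(i^{-1}(W)\se X)=c(\nu_i)^{-1}\cdot i^*\csm(W\se Y)$ for a regular embedding $i$ transversal to a Whitney stratification of $W$. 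This is exactly the input that Ohmoto and Sch\"urmann establish (via Verdier specialization / deformation to the normal cone, using additivity of $c_*$ to reduce to a single stratum and the normalization axiom on the smooth locus); flagging it as the substantive citation with a sketch is the honest and correct thing to do. One small point worth being explicit about, which you gesture at but do not state: the proposition is formulated with $Z$ an algebraic subset, but the reduction via the graph genuinely needs the VRR statement for constructible sets $W=M\times Z$ (not just closed subvarieties), since the motivic/constructible extension is what makes the additivity argument in the sketch run. Since $\csm$ is defined on constructible functions this is not an obstacle, but it should be said.
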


Proposition \ref{prop:ssm-transversality} implies that if $N$ is a smooth, compact, complex algebraic variety, $Z\subset N$ is a subvariety and $Y\subset N$ is a smooth subvariety and transversal to a Whitney stratification of $Z$, then
\begin{equation}\label{eq:euler-of-slice}
	\chi(Z\cap Y)=\int\limits_N\frac{\csm(Z \subset N)\csm(Y \subset N)}{c(TN)}.
\end{equation}

Specializing to $Z\subset \C\P^n$ and $Y=\C\P^i$ transversal to some Whitney stratification of $Z$ (which is a generic condition), we can find a reasonably simple formula calculating the Euler characteristics of the slices $Z\cap \C\P^i$  of $Z$. By formula \eqref{eq:euler-of-slice}, there is a linear \emph{Ohmoto-Aluffi transformation} which maps the vector $\csm(Z\se \CP^n)$ to the vector of $\chi(Z\cap \CP^i)$. This transformation is invertible\footnote{This can be shown by specializing to $Z=\CP^j$.}: if we know all the Euler characteristics $\chi(Z\cap \C\P^i)$ then we can calculate $\csm(Z\subset \C\P^n)$.

The real version is the following:

\begin{theorem}\label{thm:real-aluffi} For any real subvariety $Z$ of the real projective space $\RP^n$ assign two vectors of integers:
	\begin{itemize}
		\item the coefficients of $[\RP^i]$ in the class $\su(Z\subset \RP^n)$,
		\item the list of Euler characteristics of the \emph{slices} $Z\cap \RP^i$, for projective subspaces $\RP^i\se \RP^n$ in generic position.
	\end{itemize}
	Then there is a linear transformation---independent of $Z$---of determinant one, which maps the first vector to the second. 
\end{theorem}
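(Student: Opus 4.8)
The plan is to mimic the proof of the Ohmoto-Aluffi theorem, replacing the complex ingredients with their real analogues, which have all been collected in the excerpt. The key substitutions are: $\csm \rightsquigarrow \su$, $\ssm \rightsquigarrow \ssu$, $c(TN) \rightsquigarrow w(TN)$, $\chi \rightsquigarrow \chi_2$, and Proposition \ref{prop:ssm-transversality} $\rightsquigarrow$ Theorem \ref{thm:generic-pullback}. The only genuinely new point is that the determinant-one claim must be checked over $\mathbb{Z}$, not merely mod $2$, even though the Stiefel-Whitney classes themselves live mod $2$; I address this below.

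First I would establish the real analogue of \eqref{eq:euler-of-slice}. Let $Y = \RP^i \subseteq \RP^n$ be a projective subspace in generic position, i.e.\ transversal to a Whitney stratification of $Z$. Because $Y$ is an algebraic subvariety of the smooth algebraic variety $\RP^n$ and the inclusion $\iota_Y: Y \hookrightarrow \RP^n$ can be perturbed within the class of generic smooth maps without changing the (oriented or not) cobordism-irrelevant data, Theorem \ref{thm:generic-pullback} applies to give $\iota_Y^* \ssu(Z \subseteq \RP^n) = \ssu(Z \cap Y \subseteq Y)$. Pushing forward to $\RP^n$, using \eqref{eq:ssu_su}, the projection formula, and the fact that the top-degree coefficient of $\su$ is $\chi_2$ (equation \eqref{eq:su}), I obtain
\begin{equation*}
	\chi_2(Z \cap \RP^i) = \int_{\RP^n} \frac{\su(Z \subseteq \RP^n)\, \su(\RP^i \subseteq \RP^n)}{w(T\RP^n)} \pmod 2.
\end{equation*}
Here $\su(\RP^i \subseteq \RP^n) = [\RP^i] = a^{n-i}$ where $a \in H^1(\RP^n;\F_2)$ is the generator, and $w(T\RP^n) = (1+a)^{n+1}$. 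Expanding $\su(Z \subseteq \RP^n) = \sum_j s_j\, a^{n-j}$ with $s_j \in \F_2$, the integral computes $\chi_2(Z \cap \RP^i)$ as an explicit $\F_2$-linear combination $\sum_j T_{ij} s_j$, where $T_{ij}$ is the coefficient of $a^j$ in $(1+a)^{-(n+1)} \cdot a^i$ read off in $H^*(\RP^n;\F_2)$ — a fixed, $Z$-independent matrix.

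The remaining point is to upgrade this to an \emph{integral} statement of determinant one. The honest Euler characteristics $\chi(Z \cap \RP^i) \in \Z$ and the honest integer coefficients of $[\RP^i]$ in $\su$ (which exist because Sullivan's classes, via the Stiefel-Whitney homology cycles, have integer-valued coefficient vectors on the preferred basis $\{[\RP^i]\}$ — or, failing a clean integral lift, one runs the argument formally with the universal binomial-coefficient matrix over $\Z$) are related by the $\Z$-matrix whose reduction mod $2$ is $(T_{ij})$. That universal matrix is $\big(\binom{-(n+1)}{\,j-i\,}(-1)^{j-i}\big)_{i,j}$ up to the obvious reindexing; it is unitriangular — diagonal entries equal $\binom{0}{0}=1$, and $T_{ij}=0$ for $j<i$ — hence has determinant $1$ over $\Z$, and its mod $2$ reduction is unitriangular as well, proving the claim in both the integral and the $\F_2$ formulation. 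The main obstacle, and the step I would be most careful about, is the justification that Theorem \ref{thm:generic-pullback} — stated for generic \emph{smooth} maps $f:M\to N$ — legitimately applies to the inclusion of a fixed linear subspace $\RP^i$: one needs that a generic position linear slice realizes the transversal pullback, which follows because the generic translates of $\RP^i$ form a family of such maps and transversality to the Whitney stratification of $Z$ is the relevant genericity (exactly as in the complex Ohmoto-Aluffi argument); I would spell this out by a Bertini-type / parametric transversality remark referencing \eqref{eq:pullback}.
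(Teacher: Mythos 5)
Your overall strategy is the same as the paper's: establish the real slice formula
\begin{equation*}
	\chi_2(Z\cap Y)=\int\limits_{\RP^n}\frac{\su(Z \subset \RP^n)\,\su(Y \subset \RP^n)}{w(T\RP^n)},
\end{equation*}
and then read off a $Z$-independent unitriangular matrix. (The paper gets this formula from Kleiman--Bertini plus Sch\"urmann's published transversality theorem rather than from Theorem \ref{thm:generic-pullback}, whose proof is deferred to \cite{FMS}; your worry about applying the smooth-generic statement to a fixed linear inclusion is legitimate, and the cleaner fix is exactly the algebraic transversality result the paper cites. The paper also settles the determinant-one claim differently, by identifying the mod $2$ transformation with the mod $2$ reduction of the complex Ohmoto--Aluffi transformation via the Borel--Haefliger map, rather than by your direct triangularity computation.)

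However, there is a genuine error in your explicit computation: you substitute $\su(\RP^i\se\RP^n)=[\RP^i]=a^{n-i}$. This is only the lowest-order term of the Stiefel--Whitney class; by \eqref{eq:su} (and since $\RP^i$ is smooth, $\su(\RP^i\se\RP^n)$ is the Gysin pushforward of $w(T\RP^i)$) one has $\su(\RP^i\se\RP^n)=a^{n-i}(1+a)^{i+1}$ truncated in degree $n$, not $a^{n-i}$. Consequently your matrix $T_{ij}$, and the closed form $\binom{-(n+1)}{j-i}(-1)^{j-i}$, is not the Ohmoto--Aluffi transformation, and the identity $\chi_2(Z\cap\RP^i)=\sum_j T_{ij}s_j$ as you state it is false. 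A quick test: take $Z=\RP^n$, so $\su(Z\se\RP^n)=(1+a)^{n+1}$; the correct formula gives $\chi_2(Z\cap\RP^i)=\int a^{n-i}(1+a)^{i+1}=\binom{i+1}{1}\equiv i+1$, i.e.\ $1$ for $i$ even, matching $\chi_2(\RP^i)$, whereas your version gives $\int a^{n-i}=0$ for all $i>0$, contradicting $\chi_2(\RP^2)=1$. The good news is that the error is local: with the corrected class the matrix entries become $\binom{i-n}{\,i+j-n\,}$ (mod $2$), which vanish for $i+j<n$ and equal $1$ for $i+j=n$, so the transformation is still unitriangular in the appropriate ordering and your determinant-one conclusion survives; your discussion of an integral lift should likewise be tightened (the paper does this via the comparison with the complex case), since Sullivan classes have $\F_2$ coefficients and "integer-valued coefficient vectors" is not something you can simply assert.
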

\begin{proof}
    A generic projective subspace is transversal to $Z$ by the Kleiman-Bertini theorem. Therefore we can use Sch\"urmann's transversality theorem \cite[(2.17)]{Schurmann2017}: this implies that
	
	\begin{equation}\label{eq:mod2chi}
		\chi_2(Z\cap Y)=\int\limits_N\frac{\su(Z \subset N)\su(Y \subset N)}{w(TN)}.
	\end{equation}
	where $N=\RP^n$ in our case. Using this transversality theorem, the proof of Aluffi \cite{Aluffi2013} can be also applied in the real context of Stiefel-Whitney classes. 

 Alternatively, the Ohmoto-Aluffi transformation on $H^*(\RP^n;\F_2)$ is the same as the Ohmoto-Aluffi transformation on $H^*(\CP^n;\Z)$ mod 2 (with the degrees halved). This follows from \eqref{eq:mod2chi} and $cl_{\R}(c(T\CP^n))=w(T\RP^n)$, where $cl_{\R}$ is Borel and Haefliger's real cycle class map \eqref{eq:clR}, \cite[5.12]{BorelHaefliger1961}. Since the Ohmoto-Aluffi transformation is invertible over $\Z$, it is also invertible over $\F_2$ (in fact it has determinant one). This implies the analogous result for the real projective space and the modulo 2 Euler characteristics. (A similar argument can be carried out for all partial flag varieties.)
\end{proof}

Therefore a partial answer to Sullivan's question is that the  Stiefel-Whitney  classes of a subvariety $Z$ of a projective space contain the same information as the  modulo 2 Euler characteristics of the generic slices of $Z$.
\bigskip

We generalize Theorem \ref{thm:real-aluffi} to study singularity loci of generic smooth maps, which are no longer algebraic.
\begin{proposition} \label{slice}
	Let  $f:\RP^n\to N$ be a generic smooth map, where $N$ is a smooth real algebraic variety, $Z\subset N$ is a subvariety,  and $Y=f^{-1}(Z)$. Then for a generic projective subspace $\RP^i$ with inclusion map $j:\RP^i\to \RP^n$:
	\[\ssu(Y\cap \RP^i\subset \RP^i)=j^*\ssu(Y\subset \RP^n).\]
\end{proposition}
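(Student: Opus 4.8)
The plan is to combine the transversal pullback property of Segre-Stiefel-Whitney classes (Theorem \ref{thm:generic-pullback}) with functoriality of Segre-SW classes under transversal (generic) intersections, used twice in succession: once for the map $f:\RP^n\to N$ and once for the inclusion $j:\RP^i\hookrightarrow \RP^n$. Concretely, I would first argue that $Y=f^{-1}(Z)$ is a Whitney-stratified Euler subspace of $\RP^n$ carrying Stiefel-Whitney classes, and that by Theorem \ref{thm:generic-pullback} its Segre-SW class is $\ssu(Y\subset \RP^n)=f^*\ssu(Z\subset N)$. Next I would note that for a generic linear subspace $\RP^i$, the inclusion $j$ is transversal to a Whitney stratification of $Y$: this is a standard general-position statement (a Bertini–Sard type argument, or Kleiman transversality for the $\PGL$-action on $\RP^n$ applied to the — finitely many — strata of $Y$), using that $Y$ has a finite Whitney stratification with strata that are smooth submanifolds of $\RP^n$.

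The heart of the argument is then: transversality of $j$ to the stratification of $Y$ lets us pull back the Segre-SW class. Here I would invoke the transversal pullback property in the form needed for the inclusion of a generic projective subspace. One clean way is to present $j$ itself as the restriction of a generic smooth map and apply Theorem \ref{thm:generic-pullback} again — but since $\RP^i\subset \RP^n$ is algebraic and $Y$ is only stratified (not algebraic), the cleaner route is to use the stratified transversal pullback property of Segre-SW classes directly (Matsui \cite{Matsui1988} in the PL category, Schürmann \cite{Schurmann2017} in the algebraic category, and the smooth version underlying Theorem \ref{thm:generic-pullback}, proved in \cite{FMS}): transversality of $j:\RP^i\to\RP^n$ to the Whitney stratification of $Y$ gives
\[
j^*\ssu(Y\subset\RP^n)=\ssu(j^{-1}(Y)\subset\RP^i)=\ssu(Y\cap\RP^i\subset\RP^i),
\]
which is exactly the claim. (Alternatively, one can compose: $g=f\circ j:\RP^i\to N$ is again generic for generic $j$, so $g^{-1}(Z)=Y\cap\RP^i$ and Theorem \ref{thm:generic-pullback} applied to $g$ gives $\ssu(Y\cap\RP^i\subset\RP^i)=g^*\ssu(Z\subset N)=j^*f^*\ssu(Z\subset N)=j^*\ssu(Y\subset\RP^n)$.)

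The main obstacle — and the reason I would be careful about which formulation to cite — is the technical point that $Y$ is not an algebraic set but only a Whitney-stratified Euler space (the singularity locus of a generic smooth map), so one needs the transversal pullback property in the smooth/stratified generality rather than the algebraic one. This is precisely the content deferred to \cite{FMS}, which the excerpt allows us to assume; given that, the two-step composition argument makes the proof short. The only remaining care is to check that "generic $f$" together with "generic $\RP^i$" can be achieved simultaneously — i.e. that the composite $f\circ j$ is generic for a residual set of pairs — which follows because genericity is an open dense condition and composing with a fixed generic linear embedding preserves transversality to the (finitely many) strata of $Z$.
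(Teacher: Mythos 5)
Your argument is correct and follows the same route the paper itself takes: the paper derives Proposition \ref{slice} directly from the transversal pullback property of Theorem \ref{thm:generic-pullback} (deferring the technical details, precisely the stratified smooth version you flag, to \cite{FMS}). Your composition trick $g=f\circ j$ together with the Kleiman/Sard-type genericity of the linear slice is exactly the intended filling-in of those details, so no essential difference.
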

This statement is a consequence of Theorem \ref{thm:generic-pullback}. The proof will be published in \cite{FMS}.
\bigskip

Consequently we can generalize Theorem \ref{thm:real-aluffi}:
\begin{theorem}\label{thm:real-aluffi4almost}Let  $f:\RP^n\to N$ be a generic map, where $N$ is a smooth real variety, $Z\subset N$ is a subvariety,  and $Y=f^{-1}(Z)$. We can assign two vectors of integers to $Y$:
	\begin{itemize}
		\item the coefficients of $[\RP^i]$ in the class $\su(Y\subset \RP^n)$,
		\item the list of Euler characteristics of the \emph{slices} $Y\cap \RP^i$ for projective subspaces $\RP^i\se \RP^n$ in generic position.
	\end{itemize}
	Then there is a universal linear transformation of determinant one, which maps the first vector to the second. 
\end{theorem}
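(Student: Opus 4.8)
The plan is to reduce Theorem~\ref{thm:real-aluffi4almost} to Theorem~\ref{thm:real-aluffi} via the slicing compatibility of Segre-SW classes stated in Proposition~\ref{slice}. Recall that the content of Theorem~\ref{thm:real-aluffi} is that there is a fixed invertible integer matrix—the Ohmoto-Aluffi transformation—carrying the coefficient vector $(a_i)$ of $[\RP^i]$ in $\su(Y\se\RP^n)$ to the vector $(\chi_2(Y\cap\RP^i))$, and that this transformation depends only on $n$, not on the subvariety. The point is that the argument for Theorem~\ref{thm:real-aluffi} only used the transversal pullback property \eqref{eq:pullback} of Segre-SW classes, specialized to inclusions $j:\RP^i\hookrightarrow\RP^n$, together with the expansion \eqref{eq:su} identifying the top term of $\su$ with $\chi_2$. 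So I would isolate exactly that input and check it holds for $Y=f^{-1}(Z)$ even though $Y$ is no longer algebraic.

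First I would note that by Theorem~\ref{thm:generic-pullback}, $Y=f^{-1}(Z)$ is a Whitney stratified Euler space (the pullback of a Whitney stratification of $Z$ under the generic $f$) and hence carries Stiefel-Whitney classes $\su(Y\se\RP^n)$ and Segre-SW classes $\ssu(Y\se\RP^n)$ satisfying \eqref{eq:ssu_su}; in particular the expansion \eqref{eq:su}, $\su(Y\se\RP^n)=[Y]+\dots+\chi_2(Y)[pt]$, still holds, since it is a general feature of the SW class of a stratified Euler subspace. Second, for a generic projective subspace $j:\RP^i\hookrightarrow\RP^n$, Proposition~\ref{slice} gives $\ssu(Y\cap\RP^i\se\RP^i)=j^*\ssu(Y\se\RP^n)$; this is the exact analogue of Proposition~\ref{prop:ssm-transversality} used in the complex case, and it is what replaces algebraicity of $Y$ in the argument.

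Third, I would run the Aluffi-type bookkeeping. Writing $\ssu(Y\se\RP^n)=\sum_j s_j c^{\,j}$ with $c\in H^1(\RP^n;\F_2)$ the generator, \eqref{eq:ssu_su} and $w(T\RP^n)=(1+c)^{n+1}$ let one pass freely between the vector $(s_j)$ and the vector $(a_i)$ of $[\RP^i]$-coefficients of $\su$ by an invertible, $Z$-independent triangular change of basis (binomial coefficients of $(1+c)^{n+1}$). Applying $j^*$ to $\RP^i$ simply truncates to degrees $\le i$, and taking the top-degree coefficient of $\su(Y\cap\RP^i\se\RP^i)=j^*\ssu(Y\se\RP^n)\cdot w(T\RP^i)$ yields, by \eqref{eq:su}, the scalar $\chi_2(Y\cap\RP^i)$ as a fixed $\F_2$-linear combination of $s_0,\dots,s_i$—equivalently of $a_0,\dots,a_i$. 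Assembling these for $i=0,\dots,n$ gives a lower-triangular matrix with $1$'s on the diagonal, hence determinant one, and manifestly independent of $f$, $N$, $Z$; that is the asserted universal transformation. Alternatively, one can observe that this matrix is literally the mod $2$ reduction (with degrees halved) of the complex Ohmoto-Aluffi matrix, exactly as in the second proof of Theorem~\ref{thm:real-aluffi}, so invertibility over $\Z$ forces determinant one over $\F_2$.

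The main obstacle is not the linear algebra, which is routine once the two structural facts are in place, but ensuring that those two facts genuinely apply to the non-algebraic set $Y=f^{-1}(Z)$: namely that $Y$ is an Euler space with well-defined SW/Segre-SW classes (Theorem~\ref{thm:generic-pullback}) and that Proposition~\ref{slice}'s slicing identity holds for \emph{generic} $\RP^i$. Both are quoted from \cite{FMS}, so in this paper I would simply invoke them and remark that, granting them, the proof of Theorem~\ref{thm:real-aluffi} transcribes verbatim with $Z$ replaced by $Y$ and "algebraic" replaced by "generic smooth," since at no other point was algebraicity of the sliced set used.
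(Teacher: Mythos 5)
Your proposal is correct and follows essentially the same route as the paper, which likewise deduces the statement from Proposition~\ref{slice} (itself a consequence of Theorem~\ref{thm:generic-pullback}, quoted from \cite{FMS}) and then reruns the Aluffi-type argument of Theorem~\ref{thm:real-aluffi} with $Y=f^{-1}(Z)$ in place of an algebraic subvariety. Your explicit bookkeeping is fine up to a harmless convention: the change of basis between the Segre coefficients $(s_j)$ and the $[\RP^i]$-coefficients $(a_i)$ is anti-triangular (index-reversing) rather than triangular, but it is still unimodular, so the composite transformation indeed has determinant one.
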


\section{Universal obstructions: singularity loci of smooth maps}
\label{sec:degloci}


Loosely speaking a complex contact singularity $\eta$ is a subvariety of the space of holomorphic map germs $(\C^n,0)\to (\C^{n+l},0)$. More precisely, consider a holomorphic finitely $\mathcal{K}$-determined map germ $g:(\C^n,0)\to (\C^{n+l},0)$ and its contact orbit $\eta=\mathcal{K}g$ in the space of map germs. It is useful to study $g$ and its trivial unfoldings at the same time as many of their numerical invariants agree. The \emph{trivial unfolding} $\sigma g :(\C^{n+1},0)\to (\C^{n+l+1},0)$ is defined as
\[ \sigma g(x_1,\dots,x_n,x_{n+1}):=(g(x_1,\dots,x_n),x_{n+1}).\]
If we refer to a singularity or any of its trivial unfoldings, then we use the notation $\eta(l)$, where $l$ denotes the relative codimension.
We will also use the notation $\eta(n+a,n+a+l)$ for the contact orbit of the $a$-fold trivial unfolding of $g$. The case of real singularities is completely analogous.
\bigskip

Our main objective  is to study obstructions for the existence of a smooth map $f:M\to N$ avoiding a certain singularity $\eta$. These results come in two levels: First we give an abstract reasoning that a non-zero cohomology class is supported on the singularity locus $\eta(f)$, proving that $\eta(f)$ cannot be empty. Second, we interpret this obstruction as a non-zero invariant of the singularity locus $\eta(f)$ for \emph{generic} $f$. The case of the fundamental class is well studied, so as a warmup we review it in the next section.

\subsection{Universal fundamental classes of contact singularities -- Thom polynomials} \label{sec:tp} We will use the existence of universal classes or Thom polynomials.
For a complex contact singularity $\eta$ the \emph{Thom polynomial} of $\eta$, $\tp_\eta\in\Z[\ccc]$ has the following universal property:

\begin{theorem}\cite{Thom1955}, \cite{HaefligerKosinski1958}  \label{thm:thom}
	If $f:M\to N$ is a proper holomorphic map of complex manifolds then the substitution
	\[ \tp_\eta(f):=\tp_\eta(c_i\mapsto c_i(f^*(TN)\ominus TM))\in H^*(M;\Z)  \]
	is  supported on $\bar\eta(f)$, the $\bar\eta$-locus of the map $f$ (cf.\ Definition \ref{def:support}). This implies that $\tp_\eta(f)$ is an obstruction for having a map $g:M\to N$ homotopic to $f$ with empty $\bar\eta$-locus.
\end{theorem}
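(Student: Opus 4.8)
The plan is to recover the Thom polynomial as the statement that a certain universal cohomology class, defined on a classifying space, restricts to the given substitution and is Poincaré dual (in the Borel–Moore sense) to the closure of the singularity locus. First I would recall the standard "universal" setup: a contact singularity $\eta$ of relative codimension $l$ determines a $\mathcal{K}$-invariant subvariety in the jet space $J^k(n,n+l)$, and by contact invariance this descends to a subvariety $\eta$ in the (finite-dimensional approximation of the) classifying space for pairs of bundles $(E,F)$ with $\rank F - \rank E = l$, the relevant model being $BGL(n)\times BGL(n+l)$ or, after stabilisation, a suitable Grassmannian model. The Thom polynomial $\tp_\eta$ is, by definition, the class $[\bar\eta]\in H^*(B;\Z)$ expressed in terms of the universal Chern classes $c_i = c_i(F\ominus E)$; that this class is a polynomial in the $c_i$ is exactly the content of the Thom–Haefliger–Kosinski construction, which I would take as the defining input.

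Next I would set up the geometry for a given proper holomorphic $f:M\to N$. Choosing a sufficiently high jet, the map $f$ induces a classifying map $\kappa_f:M\to B$ (classifying $TM$ and $f^*TN$ together with the jet data), with the property that $\kappa_f^{-1}(\bar\eta)$ is precisely $\bar\eta(f)$ set-theoretically, and that $\kappa_f$ can be taken transversal to a Whitney stratification of $\bar\eta$ after a small perturbation of $f$ within its homotopy class (Thom's transversality / multijet transversality). Under such transversality, the preimage $\kappa_f^{-1}(\bar\eta)$ carries the pulled-back fundamental class, so $\kappa_f^*[\bar\eta] = [\bar\eta(f)]$ in $H^*(M;\Z)$; and since $\kappa_f^* c_i(F\ominus E) = c_i(f^*TN\ominus TM)$ by naturality of Chern classes, this gives $\tp_\eta(f) = \kappa_f^*\tp_\eta = [\bar\eta(f)]$, which is visibly supported on $\bar\eta(f)$ in the sense of Definition \ref{def:support}. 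For a general $f$ (not transversal), the class $\kappa_f^*[\bar\eta]$ is still supported on $\kappa_f^{-1}(\bar\eta)=\bar\eta(f)$ because the support of a pulled-back class is contained in the preimage of the support — this is the crucial point that does not require transversality, only continuity of $\kappa_f$ and the fact that $[\bar\eta]$ is supported on $\bar\eta\subset B$.

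Finally, the obstruction statement is a formal consequence: if $g$ is homotopic to $f$ with $\bar\eta(g)=\emptyset$, then $\kappa_g$ is homotopic to $\kappa_f$ (the classifying map depends only on the homotopy class of the underlying bundle data plus a jet choice, which can be propagated along the homotopy), so $\tp_\eta(f)=\kappa_f^*\tp_\eta = \kappa_g^*\tp_\eta = \tp_\eta(g)$; but $\tp_\eta(g)$ is supported on $\bar\eta(g)=\emptyset$, hence vanishes. Therefore nonvanishing of $\tp_\eta(f)$ obstructs the existence of such a $g$. The main obstacle, and the part I would treat with most care, is the passage to a genuine finite-dimensional classifying space on which $\bar\eta$ is a well-defined algebraic (or at least suitably stratified) subvariety carrying a fundamental class: one must invoke the stabilisation/finite-determinacy theory (the germ $g$ is finitely $\mathcal{K}$-determined, so a finite jet suffices) and the fact that Thom polynomials are stable under trivial unfoldings, so that a single class in $\Z[c_1,c_2,\ldots]$ works in all dimensions. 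Once that framework is in place, the transversality and naturality arguments are routine, and the support/homotopy-invariance argument is purely formal.
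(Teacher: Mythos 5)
Your argument is essentially sound, and it coincides with the mechanism this paper actually relies on: the paper itself gives no proof of this theorem (it is cited as classical from Thom and Haefliger--Kosinski), but the universal-class scheme you describe --- equivariant fundamental class of $\bar\eta$ in the jet bundle over a finite-dimensional approximation of the classifying space, pulled back along the map classifying $(TM,f^*TN)$ together with the jet section, support of a pullback contained in the preimage of the support, and homotopy invariance of the bundle data --- is precisely the skeleton formalized in Proposition \ref{prop:avoiding_bundle} and reused in the proof of Theorem \ref{thm:ssw}. Your reduction of the obstruction statement to ``$\tp_\eta(g)$ is supported on the empty set, hence zero'' is the intended formal argument.

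One caveat: the sentence in which you perturb $f$ within its homotopy class to make the jet map transversal to a Whitney stratification of $\bar\eta$ has no meaning in the holomorphic category --- genericity is not available there, and a perturbed map would leave the category. This is exactly why the complex statement asserts only that $\tp_\eta(f)$ is supported on $\bar\eta(f)$, while the geometric identity $\tp_\eta(f)=[\bar\eta(f)\subset M]$ is reserved for the real smooth version (Theorem \ref{thm:thom-geom}), where generic maps exist. Since you correctly note that the support claim needs no transversality, this does not damage the proof of the statement as given; just drop the transversality detour, which proves a claim not being asserted here.
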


The existence of the Thom polynomial of a contact singularity in terms of quotient variables is related to the fact that all the trivial unfoldings $\eta(n+a,n+a+l)$ have the same Thom polynomial. For more details, see e.g.\ \cite{FeherRimanyi2012}. For a general introduction on contact singularities ant their Thom polynomials, see \cite{primer}.
\bigskip

There is an analogous theory for real analytic map germs. The (modulo 2) Thom polynomial $\tp_{\eta}$ of a real contact singularity $\eta$  is an element of $\F_2[\www]$. An important advantage of the  case of real smooth maps is the existence of generic maps. Therefore we have the following real version of Theorem \ref{thm:thom}:

\begin{theorem}\cite{Thom1955}, \cite{HaefligerKosinski1958}  \label{thm:thom-geom}
	If $f:M\to N$ is a proper smooth map of smooth manifolds and $\eta$ is a real singularity, then the substitution
	\[ \tp_\eta(f):=\tp_\eta(w_i\mapsto w_i(f^*(TN)\ominus TM))\in H^*(M;\F_2)  \]
	is  supported on $\bar\eta(f)$, the $\bar\eta$-locus of the map $f$ (cf.\ Definition \ref{def:support}). This implies that $\tp_\eta(f)$ is an obstruction for having a map $g:M\to N$ homotopic to $f$ with empty $\bar\eta$-locus.
	
	In addition, for generic $f$ we have
	\[   \tp_\eta(f)=[\bar\eta(f)\subset M].\]
\end{theorem}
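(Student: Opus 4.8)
The plan is to prove Theorem \ref{thm:thom-geom} in two stages, separating the general ``support'' statement from the genericity statement. For the support statement, I would follow the classical Thom--Haefliger--Kosinski argument: stratify the jet space. Let $J^k(M,N)$ be the space of $k$-jets, for $k$ large enough that $\eta$ is $k$-determined, so that $\bar\eta$ corresponds to a closed $\mathcal{K}$-invariant subset $\bar\eta_k \subset J^k(M,N)$ which is a union of contact orbits. The $k$-jet extension $j^k f\colon M \to J^k(M,N)$ then satisfies $\bar\eta(f) = (j^kf)^{-1}(\bar\eta_k)$. The universality of the Thom polynomial (Theorem \ref{thm:thom}, together with the real analogue as constructed in \cite{primer}, \cite{FeherRimanyi2012}) means precisely that there is a cohomology class $\tp_\eta \in \F_2[\www]$ whose evaluation on the universal difference bundle over the relevant classifying space restricts to the class Poincar\'e dual (in the appropriate Borel--Moore sense) to $\bar\eta_k$. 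Pulling back along $j^kf$ and using the compatibility of the difference bundle $f^*(TN)\ominus TM$ with the jet bundle construction, one gets that $\tp_\eta(f)$ is the image of a class living on $\bar\eta(f)$ under $H^*_{\bar\eta(f)}(M) \to H^*(M)$, which is exactly the meaning of ``supported on $\bar\eta(f)$'' in Definition \ref{def:support}. The obstruction consequence is then formal: a map $g$ homotopic to $f$ with $\bar\eta(g)=\emptyset$ would force $\tp_\eta(g)=0$, but $\tp_\eta$ only depends on $f^*(TN)\ominus TM$ up to homotopy, so $\tp_\eta(f)=\tp_\eta(g)=0$.

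For the genericity statement, $\tp_\eta(f) = [\bar\eta(f)\subset M]$, the key input is the existence of generic smooth maps in the real category (the advantage emphasized in the paragraph preceding the theorem). First I would invoke Thom's transversality theorem to choose $f$ so that $j^kf$ is transversal to a Whitney stratification of $\bar\eta_k$ into contact orbits. Transversality then gives two things: $\bar\eta(f) = (j^kf)^{-1}(\bar\eta_k)$ is a Whitney stratified subset of $M$ of the expected codimension $\codim \eta$, with its top stratum $\eta(f)=(j^kf)^{-1}(\eta)$ a submanifold of $M$ that is open and dense in $\bar\eta(f)$; and the pullback of the (co)homology fundamental class of $\bar\eta_k$ is the fundamental class $[\bar\eta(f)\subset M]$. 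So it remains to identify $(j^kf)^*[\bar\eta_k]$ with $\tp_\eta(f)$. But this is again the defining universal property: the Thom polynomial $\tp_\eta$ is constructed precisely so that its evaluation on the difference Chern/Stiefel--Whitney classes of the universal situation equals the fundamental class of $\bar\eta_k$ in the equivariant cohomology of the jet space (this is the content of \cite[]{primer}); pulling back along the classifying map of $j^kf$ and matching up the universal difference bundle with $f^*(TN)\ominus TM$ yields $(j^kf)^*[\bar\eta_k] = \tp_\eta(f)$. Combining, $\tp_\eta(f) = [\bar\eta(f)\subset M]$.

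There is one subtlety to address carefully, and this is where I expect the main work to lie. The statement asserts $\tp_\eta(f) = [\bar\eta(f)\subset M]$ as a class in $H^*(M;\F_2)$, so one must know that $\bar\eta(f)$ carries a well-defined $\F_2$-fundamental class --- i.e.\ that $\bar\eta(f)$, although singular, defines a Borel--Moore homology class that maps under Poincar\'e duality in $M$ to a cohomology class. This holds because a generic $\bar\eta(f)$ is the transversal preimage of a closed subvariety and hence inherits a Whitney stratification; any Whitney stratified set has an $\F_2$-fundamental class (its lower strata have codimension $\geq 1$ inside it, so do not contribute mod $2$, and the top stratum is a manifold). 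The genericity is exactly what guarantees the codimension-$1$ gap; for non-generic $f$ the locus $\bar\eta(f)$ could be larger or lower-dimensional and the identity would fail, which is why the second conclusion is stated only for generic $f$. I would make this precise by citing the triangulability of Whitney stratifications (as already referenced in the excerpt, \cite{Trotman2020}, \cite{sullivan}) to justify the existence of $[\bar\eta(f)\subset M]$, and then checking that the open dense stratum $\eta(f)$ determines it.

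A remark on the degenerate cases: if $\codim\eta > \dim M$ then generically $\bar\eta(f)=\emptyset$ and both sides are zero, consistent with the support statement forcing $\tp_\eta(f)=0$; the argument above handles this uniformly since transversality to a subset of too-high codimension means empty preimage. The only genuinely non-routine part of the whole argument is bookkeeping the identification of the universal difference bundle on the jet space with $f^*(TN)\ominus TM$ compatibly with jet extension, and verifying that the mod $2$ fundamental class of the (possibly singular, but Whitney stratified) locus $\bar\eta(f)$ behaves well under the transversal pullback --- both of which are standard but require care, and for the transversal pullback of Stiefel--Whitney-type fundamental classes we can lean on Theorem \ref{thm:generic-pullback} / the forthcoming \cite{FMS}.
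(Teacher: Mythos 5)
Your sketch is correct and is essentially the standard argument underlying the citations: the paper does not prove Theorem \ref{thm:thom-geom} itself (it is quoted from Thom and Haefliger--Kosinski), and your route --- a universal class over the jet bundle supported on the orbit closure, pulled back along $j^k f$ in the spirit of Proposition \ref{prop:avoiding_bundle}, then Thom jet transversality plus the mod~2 fundamental class of the Whitney stratified preimage --- is exactly the scheme the paper itself follows when proving the Segre--Stiefel--Whitney generalization (Theorem \ref{thm:ssw}). One minor economy: for the fundamental-class identity you do not need the full strength of Theorem \ref{thm:generic-pullback}, since transversal pullback of mod~2 fundamental classes of Whitney stratified sets is elementary and avoids leaning on the forthcoming \cite{FMS}.
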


The first part of this theorem gives an abstract obstruction: If $\tp_\eta(f)\neq0$, then there is no smooth map in the homotopy class of $f$ with empty $\eta$-locus. The second part gives the geometric interpretation of this obstruction: for a generic smooth map in the homotopy class of $f$, the fundamental class of the $\clos{\eta}$-locus is $\tp_\eta(f)$.

There are various methods developed to calculate complex Thom polynomials. A theorem of Borel and Haefliger \cite[Theorem 6.2]{BorelHaefliger1961} allows us to use these results to calculate real Thom polynomials:
\begin{theorem}[\cite{BorelHaefliger1961}] \label{thm:BH}
	Let $\eta_\C$ be the complexification of a real contact singularity $\eta$. Suppose that $\tp_{\eta_\C}=\sum{ a_Ic_I}$. Then  $\tp_{\eta}=\sum{ a_Iw_I}$, where the coefficients $a_I$ are reduced mod 2.
\end{theorem}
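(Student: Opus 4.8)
The statement to prove is Theorem \ref{thm:BH} (Borel--Haefliger): if $\eta_\C$ is the complexification of a real contact singularity $\eta$, and $\tp_{\eta_\C} = \sum a_I c_I$, then $\tp_\eta = \sum \bar a_I w_I$ where the $\bar a_I$ are the mod 2 reductions.

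\textbf{Plan.} The strategy is to realize both Thom polynomials as pullbacks of a single universal class under compatible classifying maps, and then apply the Borel--Haefliger real cycle class map $\cl_\R$ of \eqref{eq:clR} to pass from the complex statement to the real one. First I would set up the universal geometric model: for the relative codimension $l$, there is a ``universal map germ'' picture, e.g.\ Kazarian's classifying space construction, in which the Thom polynomial $\tp_{\eta_\C}$ is the cohomology class Poincar\'e dual to the closure $\bar\eta_\C$ inside a suitable finite-dimensional approximation $\Hom(\C^n, \C^{n+l})$-bundle over a product of Grassmannians (or equivalently, over $B\GL_n(\C) \times B\GL_{n+l}(\C)$). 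Concretely, one works with a sufficiently high jet space $J$ and a proper algebraic map $\mathfrak{f}: E \to F$ of smooth algebraic manifolds (a ``universal setting'') whose $\bar\eta$-locus has fundamental class $[\bar\eta_\C(\mathfrak f)] = \tp_{\eta_\C}(c_i \mapsto c_i(\mathfrak f^* TF \ominus TE))$; since this holds for \emph{all} proper holomorphic $f$ by Theorem \ref{thm:thom}, the universal $\mathfrak f$ carries the full information, and $\tp_{\eta_\C}$ is characterized by being this universal class.

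The key step is then the following: everything in this universal algebraic model is \emph{defined over $\R$} because $\eta$ itself is the complexification of a real germ $g$, so the contact orbit $\bar\eta_\C$ is a complexified subvariety — it is the complexification of the real contact orbit $\bar\eta$. Hence I can apply Theorem \ref{thm:bsy}'s ingredient, the real cycle class map $\cl_\R : A_\R(X) \to H^*(X(\R); \F_2)$, which (i) sends $c_i(V)$ of a complexified bundle $V$ to $w_i(V(\R))$, i.e.\ $\cl_\R(c(\mathfrak f^* TF \ominus TE)) = w(\mathfrak f_\R^* TF_\R \ominus TE_\R)$, and (ii) sends the fundamental class of a complexified subvariety to the mod 2 fundamental class of its real points, so $\cl_\R[\bar\eta_\C(\mathfrak f)] = [\bar\eta_\R(\mathfrak f_\R)]$. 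Because $\cl_\R$ is a ring homomorphism, applying it to the identity $[\bar\eta_\C(\mathfrak f)] = \tp_{\eta_\C}(c_i \mapsto c_i(\cdots))$ yields $[\bar\eta_\R(\mathfrak f_\R)] = \sum \bar a_I w_I(\mathfrak f_\R^* TF_\R \ominus TE_\R)$ with coefficients reduced mod 2. On the other hand, by the real Thom polynomial theorem (Theorem \ref{thm:thom-geom}), this real fundamental class in the universal setting is by definition the substitution $\tp_\eta(w_i \mapsto w_i(\cdots))$. Comparing, and using that the universal Stiefel--Whitney classes $w_I$ are algebraically independent in $\F_2[\www]$ (so the substitution map is injective on the relevant range — this is why one must pass to a large enough jet/approximation so that no relations are forced), we conclude $\tp_\eta = \sum \bar a_I w_I$.

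\textbf{Main obstacle.} The delicate point is \emph{identifying} the real Thom polynomial obtained this way — defined via the real cycle class of the complexified locus — with the genuine real Thom polynomial governing generic real \emph{smooth} maps (not just real-algebraic ones). Equivalently: one must know that $\bar\eta_\R(\mathfrak f_\R)$, the real points of the complexified contact orbit, really is the $\bar\eta$-locus relevant to Theorem \ref{thm:thom-geom}, and that the universal model is ``fine enough'' — i.e.\ that the classifying space carries universal classes and that the substitution $\F_2[\www] \to H^*(\text{model})$ is injective through the degrees in question, so that the coefficient identity can be read off. This requires care with: the compatibility of complex and real jet spaces under complex conjugation; that $\mathcal{K}$-finiteness of $g$ descends to the situation so that finite approximations suffice; and that the real cycle class map is compatible with the proper pushforward / Poincar\'e duality used to define fundamental classes in both settings. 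Once these bookkeeping facts are in place, the argument is essentially a one-line application of $\cl_\R$ to the complex Thom polynomial identity. A reader willing to take the universal-class formalism and the properties of $\cl_\R$ (both invoked from \cite{bsy}, \cite{BorelHaefliger1961}) as given will find the proof short; the paper can therefore simply cite \cite[Theorem 6.2]{BorelHaefliger1961} for the full technical underpinning and present the $\cl_\R$-functoriality sketch above.
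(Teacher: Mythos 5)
The paper does not actually prove this statement—it is quoted directly from Borel–Haefliger \cite[Theorem 6.2]{BorelHaefliger1961}—so there is no in-paper proof to compare against; your sketch is, however, essentially the same strategy the paper uses to prove its generalization, Theorem \ref{thm:ssw}~(\ref{item:bh}): realize the class universally on finite-dimensional Grassmannian approximations of the classifying space with jet-space fiber, and apply the real cycle class map $\cl_\R$ (via Theorem \ref{thm:bsy}) to $X=E_\C$ and $Z=\eta_\C(E)$. One point you leave implicit but should state outright: the hypothesis $\codim_\C\eta_\C=\codim_\R\eta$, built into the paper's definition of complexification, is exactly what makes your step (ii) legitimate—without it $\cl_\R$ of the complexified orbit's fundamental class need not be the fundamental class of the real locus, and the real Thom polynomial may fail to exist at all (the paper's example of $I_{22}$), so the ``main obstacle'' you flag is resolved by this codimension condition together with the Thom--Haefliger--Kosi\'nski identification of the universal class with the obstruction for smooth maps.
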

\begin{remark}
	We say that $\eta_\C$ is the {\em complexification} of the real singularity $\eta$ if  their analytic closures  are defined by the same equation and $\codim_\C\eta_\C=\codim_\R\eta$.
	This condition is only satisfied if the analytic closure of a real singularity is given by an algebraic subset of germs. For example the analytic closure of the real $I_{22}$ does not agree with its algebraic closure, but the analytic closure of $I_{22}\cup II_{22}$ does. Part of Borel and Haefliger's theorem is that all real algebraic singularities have a modulo 2 Thom polynomial. But for example $I_{22}$ does not admit a Thom polynomial. Therefore by real contact singularity, we will always mean a real, contact-invariant algebraic set.
\end{remark}

\subsection{Universal Segre-Stiefel-Whitney classes of contact singularities} \label{sec:ssw4eta}
We want to generalize the results of the previous section to Stiefel-Whitney classes.

For  holomorphic maps, there is an enhancement $\ssmeta$ of the Thom polynomial defined by Ohmoto \cite[Thm 4.4]{Ohmoto2016}, called the Segre-Schwartz-MacPherson-Thom polynomial or SSM-Thom polynomial in short. The SSM-Thom polynomial $\ssmeta$ is a power series $\ssmeta\in\Z[[\ccc]]$, whose lowest degree term is equal to the Thom polynomial, and which has the following universal property:

If $f:M\to N$ is a holomorphic map of complex manifolds then the substitution
\[ \ssmeta(f):=\ssmeta(c_i\mapsto c_i(f^*(TN)\ominus TM))\in H^*(M;\Z)  \]
is supported on $\bar\eta(f)$, the $\bar\eta$-locus of the map $f$. This implies that $\ssmeta(f)$ is an obstruction for having a map $g:M\to N$ with empty $\bar\eta$-locus homotopic to $f$. If $f$ satisfies a certain (stronger) transversality condition, then
\[   \ssmeta (f)=\ssm(\eta(f)\subset M).\]

Now we construct the analogous universal polynomial for Segre-Stiefel-Whitney classes.

\begin{theorem} \label{thm:ssw} Let $\eta$ be a real contact singularity, which is $k$-determined for some $k>0$, and such that the $k$-jet $j^k\eta\subset J^k(n,p)$ is algebraic. Then there is a unique power series $\ssu_\eta\in \F_2[[\www]]$ with the following properties:
	\begin{enumerate}
		\item \label{item:support} If $f:M\to N$ is a smooth map of  manifolds then the substitution
		\[ \ssu_\eta(f):=\ssu_\eta(w_i\mapsto w_i(f^*(TN)\ominus TM))\in H^*(M;\F_2)  \]
		is supported on $\clos{\eta}(f)$, the $\clos{\eta}$-locus of the map $f$. This implies that $\ssu_\eta(f)$ is an obstruction for having a map $g:M\to N$ homotopic to $f$ with empty $\clos{\eta}$-locus.
		\item \label{item:stable} The Segre-Stiefel-Whitney class of $\eta$  and its trivial unfolding are the same.
		
		\item \label{item:bh}Suppose that $\eta_\C$ is the complexification of  $\eta$. Then for  $\ssm_{\eta_\C}=\sum{ a_Ic_I}$ we have  $\ssu_\eta=\sum{ a_Iw_I}$.
		\item \label{item:lowest} The lowest degree term of $\ssu_\eta$ is equal to $\tp_\eta$, the (modulo 2) Thom polynomial of $\eta$.
		\item \label{item:trans-pullback} If $f:M\to N$ is a generic smooth map of real algebraic manifolds, then
		\[  \ssu_\eta(f)=\ssu(\eta(f)\subset M).\]
	\end{enumerate}
\end{theorem}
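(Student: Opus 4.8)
The plan is to \emph{construct} the power series $\ssu_\eta$ directly via the Borel--Haefliger type correspondence of Theorem~\ref{thm:bsy}, and then verify the five listed properties, deriving~\eqref{item:bh} essentially by definition and the remaining items from the corresponding complex statements of Ohmoto together with the transversal pullback property of Theorem~\ref{thm:generic-pullback}.

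\textbf{Construction.} Since $\eta$ is $k$-determined with $j^k\eta$ algebraic, we may complexify: let $\eta_\C$ be the complexification of $\eta$, so that $\ssm_{\eta_\C}\in\Z[[\ccc]]$ exists by Ohmoto \cite[Thm 4.4]{Ohmoto2016}. Writing $\ssm_{\eta_\C}=\sum a_I c_I$, \emph{define} $\ssu_\eta:=\sum \bar a_I w_I\in\F_2[[\www]]$ with $\bar a_I$ the mod $2$ reduction. This forces~\eqref{item:bh}, and uniqueness follows once~\eqref{item:lowest} and the universal property~\eqref{item:support} are known, since a universal power series is determined by its values on a family of maps with enough independent relative Chern/SW data (the usual argument: the classifying-space substitution $w_i\mapsto w_i(f^*TN\ominus TM)$ is, for a suitable universal $f$ over a product of Grassmannians, injective on $\F_2[[\www]]$ in each fixed degree). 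Item~\eqref{item:lowest} is immediate from the construction: the lowest degree term of $\ssm_{\eta_\C}$ is $\tp_{\eta_\C}$, whose mod $2$ reduction is $\tp_\eta$ by Borel--Haefliger (Theorem~\ref{thm:BH}). Item~\eqref{item:stable} likewise transfers from the complex side: Ohmoto's $\ssm$-Thom polynomial is stable under trivial unfolding (this is exactly why it can be written in the quotient variables $c_i$), and the same expression in the $w_i$ then serves for the real unfolding $\sigma\eta$.

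\textbf{The support property~\eqref{item:support}.} This is the main point and the place I expect the real difficulty. The strategy: first establish~\eqref{item:trans-pullback} for \emph{generic} $f$ between real algebraic manifolds, then deduce~\eqref{item:support} for arbitrary smooth $f$ by a genericity/approximation argument. For generic $f:M\to N$ algebraic, one sets up the standard transversality picture: there is a universal (semialgebraic, Whitney stratified) locus $\eta\subset J^k(M,N)$, and $\eta(f)$ is the preimage of $\eta$ under the $k$-jet section $j^kf$. Generic $f$ is transversal to a Whitney stratification of $\eta$, so by Theorem~\ref{thm:generic-pullback} (transversal pullback of Segre-SW classes, applied to $j^kf$ composed with the appropriate projection, or equivalently to the evaluation on $M\times N$ as in Ohmoto's complex argument),
\[
(j^kf)^*\ssu(\eta\subset J^k(M,N))=\ssu\bigl(\eta(f)\subset M\bigr).
\]
The left side is computed via~\eqref{item:bh}: applying $\cl_\R$ to Ohmoto's complex identity $\ssm_{\eta_\C}(c_i\mapsto c_i(\dots))=\ssm(\eta_\C(f_\C)\subset M_\C)$ and using Theorem~\ref{thm:bsy} together with $\cl_\R(c(T\C P^\bullet))=w(T\R P^\bullet)$-type naturality of the real cycle class map, one identifies $(j^kf)^*\ssu(\eta\subset J^k)$ with the substitution $\ssu_\eta(w_i\mapsto w_i(f^*TN\ominus TM))=\ssu_\eta(f)$. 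This gives~\eqref{item:trans-pullback}, hence in particular $\ssu_\eta(f)=\ssu(\eta(f)\subset M)$ is supported on $\clos\eta(f)$ for generic algebraic $f$. For a general smooth $f$ one argues that the cohomology class $\ssu_\eta(f)\in H^*(M;\F_2)$ depends only on the homotopy class of $f$ (it is a characteristic-class substitution), and that any smooth $f$ can be homotoped to a generic one whose $\clos\eta$-locus is a fixed compact set; if $g$ is homotopic to $f$ with $\clos\eta(g)=\emptyset$, then $\ssu_\eta(f)=\ssu_\eta(g)=\ssu(\emptyset\subset M)=0$, which is the assertion of support on $\clos\eta(f)$ (cf.\ Definition~\ref{def:support}, the same logic as in Theorems~\ref{thm:thom}, \ref{thm:thom-geom}).

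\textbf{Main obstacle.} The delicate step is making the transversal pullback identity legitimate in the \emph{smooth} (non-algebraic) setting for item~\eqref{item:trans-pullback}, and in matching conventions between Ohmoto's complex $\ssm$-Thom polynomial (defined on $M\times N$, or on jet space) and the naive substitution in $w_i(f^*TN\ominus TM)$ — one must check that the relevant relative tangent bundle appearing in the jet-space construction pulls back correctly, and that the Whitney stratified locus $\eta\subset J^k(M,N)$ is an Euler space so that $\ssu$ is defined there (this is why the hypothesis ``$j^k\eta$ algebraic'' is imposed: algebraic sets are Euler). I would handle the first by invoking Theorem~\ref{thm:generic-pullback} (whose proof is deferred to \cite{FMS}) in the form needed, and the bundle bookkeeping by the same computation Ohmoto carries out in \cite[Thm 4.4]{Ohmoto2016}, transported via $\cl_\R$.
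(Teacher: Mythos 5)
Your route (define $\ssu_\eta$ as the mod~2, $c_I\mapsto w_I$ reduction of Ohmoto's $\ssm_{\eta_\C}$, so that (\ref{item:bh}) holds by fiat) is not the paper's (which defines $\ssu_\eta$ intrinsically as the $\GL(n)\times\GL(p)$-equivariant Segre-SW class of $j^k\eta\subset J^k(n,p)$ over real Grassmannian approximations of the classifying space, and then \emph{proves} (\ref{item:bh}) by applying Theorem \ref{thm:bsy} to the complexified universal bundle $X=E_\C$, $Z=\eta_\C(E_\C)$). The problem is that your reshuffling does not let you skip that step: to prove (\ref{item:support}) and (\ref{item:trans-pullback}) you must identify your formally defined power series with the genuine real equivariant Segre-SW class of the universal locus in the real jet bundle, and your proposed identification --- ``apply $\cl_\R$ to Ohmoto's identity $\ssm_{\eta_\C}(c_i\mapsto c_i(\dots))=\ssm(\eta_\C(f_\C)\subset M_\C)$'' --- does not parse: a generic smooth real map $f$ has no complexification $f_\C$, so there is no complex identity to reduce. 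The Borel--Haefliger comparison has to be carried out on the universal algebraic model (the bundle $E\to \gr_n(\R^N)\times\gr_p(\R^N)$ with fiber $J^k(n,p)$ and its complexification), which is exactly the construction you were trying to avoid; once it is in place, (\ref{item:trans-pullback}) does follow from Thom jet transversality plus Theorem \ref{thm:generic-pullback}, as in the paper.

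The second, independent gap is your treatment of (\ref{item:support}). What you prove --- ``if $g\simeq f$ has empty $\clos{\eta}$-locus then $\ssu_\eta(f)=0$'' --- is only the stated \emph{consequence} of (\ref{item:support}), not the property itself, which asserts that $\ssu_\eta(f)$ restricts to zero in $H^*(M\setminus\clos{\eta}(f);\F_2)$ for the given map $f$, with $M,N$ arbitrary smooth manifolds and $f$ not assumed generic or algebraic. Homotopy invariance cannot yield this (the locus changes under homotopy, and ``homotoping $f$ to a generic map with a fixed locus'' is not meaningful), and deducing it from (\ref{item:trans-pullback}) fails because that item needs genericity and real algebraic source and target. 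The correct argument is the universal one: $j^kf$ is a section of the jet bundle which avoids the universal locus over $M\setminus\clos{\eta}(f)$, the universal Segre-SW class is supported on that locus, and one pulls back as in Proposition \ref{prop:avoiding_bundle} (this is what the paper means by ``transversality of the trivial unfolding map to the $\mathcal{K}$-orbits exactly as in the complex case''), which again requires the real equivariant class rather than the formal mod~2 reduction. A smaller point: uniqueness cannot follow from (\ref{item:support}) plus (\ref{item:lowest}), since a support statement does not determine values; it should be extracted from the universal construction or from (\ref{item:trans-pullback}) evaluated on a sufficiently rich family of generic maps.
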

\begin{proof} To construct $\ssu_\eta$ we follow \cite{FeherRimanyi2012} and \cite{Ohmoto2016} and define $\ssu_\eta$ as the $\GL(n)\times\GL(p)$ equivariant Segre-Stiefel-Whitney class of $j^k\eta\subset J^k(n,p)$. To define the equivariant Segre-Stiefel-Whitney class we use finite dimensional approximations of the classifying space $B(\GL(n)\times\GL(p))$:
	$\gr_n(\R^N)\times \gr_p(\R^N)$ for $N\gg0$. We construct a vector bundle $E\to \gr_n(\R^N)\times \gr_p(\R^N)$ with fiber $J^k(n,p)$ and define $\ssu_\eta$ as $\ssu(\eta(E)\subset E)$.
	(\ref{item:support}) and (\ref{item:stable}) follows from the transversality of the trivial unfolding map to the $\mathcal{K}$-orbits exactly as in the complex case. And the same argument (using supersymmetry, see e.g.\ \cite{FeherRimanyi2012}) implies that this polynomial can be written in quotient variables.
	
	To prove (\ref{item:bh}) notice that $\ssm_{\eta_\C}$ is defined using the finite dimensional approximation
	$\gr_n(\C^N)\times \gr_p(\C^N)$ for $N\gg0$ of the classifying space $B(\GL(n;\C)\times\GL(p;\C))$ and a vector bundle $E_\C\to \gr_n(\C^N)\times \gr_p(\C^N)$ with fiber $J^k_\C(n,p)$. Then we can apply Theorem \ref{thm:bsy} to $X=E_\C$ and $Z=\eta_\C(E)$.
	
	(\ref{item:lowest}) is a consequence of (\ref{item:bh}) and Theorem \ref{thm:BH}.
	
	For (\ref{item:trans-pullback}) recall now the Thom jet transversality theorem (see e.g.\ \cite[2.3, 2.4]{Damon50}):
	\begin{theorem}\label{thm:thom-jet} For a submanifold of jet spaces
		$W\subset J^k(N, P)$, the set
		\[\mathcal{W} = \{f \in  C^\infty(N, P) : j^k(f) \text{ is transverse to }W\} \]
		is a residual subset of $C^\infty(N, P)$. If $W$ is a closed submanifold then
		$\mathcal{W}$ is open. More generally if $W$ is an open stratum of a closed Whitney stratified
		set, then $\mathcal{W}$ contains an open dense subset.
	\end{theorem}
	
	Now (\ref{item:trans-pullback}) follows from Theorem \ref{thm:generic-pullback}.
\end{proof}

Part (\ref{item:support}) of Theorem \ref{thm:ssw} gives an abstract obstruction: If $\ssu_\eta(f)\neq0$, then there is no smooth map in the homotopy class of $f$ with empty $\eta$-locus. Part (\ref{item:trans-pullback}) gives the geometric interpretation of this obstruction: for a generic smooth map in the homotopy class of $f$ the Segre-Stiefel-Whitney class of the $\eta$-locus is $\ssu_\eta(f)$. Note that since $\ssu$ satisfy the motivic property, the $\ssu_\eta$-classes of the Theorem are also defined when $\eta$ is not closed.

\begin{remark}
	By the Nash-Tognoli theorem \cite{nash}, \cite{tognoli}, every compact smooth manifold is diffeomorphic to a real algebraic variety. So the algebraic condition on $M$ in part (\ref{item:trans-pullback}) can be dropped for $M$ compact.
\end{remark}

\section{The avoiding ideal $\A_\eta$}\label{sec:avoiding}

Elements of the avoiding ideal $\A_\eta$ of a singularity $\eta$ are characteristic classes 
\emph{universally supported} on degeneracy loci.  Therefore these characteristic classes evaluated on the normal bundle $\nu_f$ of a map $f:M\to N$ provides obstructions for the existence of a map $g$, homotopic to $f$, which avoids $\eta$.

The notion of universally supported classes was introduced in \cite{Pragacz1988}, \cite[p.39]{FultonPragacz}. As a generalization for general group actions the notion of avoiding ideal was introduced in \cite{FeherRimanyi2004}, which we briefly review now.

\subsection{Avoiding ideal}

\begin{definition}\label{def:support}
	Let $Z\subset X$ be a closed subset of a topological space\footnote{Although the definition makes sense for arbitrary subsets $Z\subset X$, it is most natural to consider these only for closed subsets.}. An element $x\in H^*(X)$ is \emph{supported on $Z$} if it is in the kernel of the restriction homomorphism:
	\[ r: H^*(X)\to H^*(X\setminus Z). \]	
\end{definition}
The notion of support generalizes to the equivariant setting:
\begin{definition}\label{def:avoiding} Let the Lie group $G$ act on the (real or complex) vector space $V$ and let $Z\subset X$ be a $G$-invariant subvariety. An element $x\in H^*(X)$ is \emph{supported on $Z$} if it is in the kernel of the restriction homomorphism:
	\[ r: H^*_G(V)\to H^*_G(V\setminus Z). \]	
\end{definition}

In the equivariant setting $\Ker(r)$ is also called the \emph{avoiding ideal} of $Z\subset V$, because of the following fact \cite[Theorem 2.3]{FeherRimanyi2004}:
\begin{proposition}\label{prop:avoiding_bundle} Let the Lie group $G$ act on the (real or complex) vector space $V$ and let $Z\subset V$ be a $G$-invariant subvariety. Let $P\to M$ be a principal $G$-bundle and $\kappa:M\to BG$ be the classifying map of $P$. Let $E=P\times_G V$ and $Z(E)=P\times_GZ$ be the associated bundles and $\si:M\to E$ be a section. Then for all $x$ in the kernel of the restriction map
	\[ r: H^*_G(V)\to H^*_G(V\setminus Z) \]
	the class $\kappa^*x$ is supported on the \emph{$Z$-locus} $Z(\si):=\sigma^{-1}\big(E(Z)\big)$. Consequently if $\kappa^*x$ is non-zero, then there is no section $\sigma:M\to E$ \emph{avoiding $Z$}, i.e.\ with empty $Z$-locus.
\end{proposition}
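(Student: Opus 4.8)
The plan is to reduce everything to naturality of the restriction map in equivariant cohomology. First I would set up the Borel construction carefully: choose a contractible free $G$-space $EG$ with quotient $BG = EG/G$, so that $H^*_G(V) = H^*(EG\times_G V)$ and, because $V$ is $G$-equivariantly contractible to a point, the projection $EG\times_G V\to BG$ induces an isomorphism $H^*_G(V)\cong H^*(BG)$. Under this identification an element $x\in H^*_G(V)$ is just a universal characteristic class. The key object is the commuting square relating the bundle pair $(E,Z(E))\to M$ to the universal pair $(EG\times_G V, EG\times_G Z)\to BG$: the classifying map $\kappa\colon M\to BG$ of $P$ is covered by a $G$-equivariant map $P\to EG$, hence by a bundle map $\tilde\kappa\colon E = P\times_G V\to EG\times_G V$ which carries $Z(E)=P\times_G Z$ into $EG\times_G Z$ and $E\setminus Z(E)$ into $(EG\times_G V)\setminus(EG\times_G Z)$. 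This is the heart of the argument and the step I expect to need the most care: one must check that $\tilde\kappa$ restricts correctly on the complements, which follows because $Z\subset V$ is $G$-invariant so the associated-bundle construction is functorial in the pair.

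Next I would assemble the naturality diagram. Writing $r$ for the universal restriction $H^*_G(V)\to H^*_G(V\setminus Z)$ and $r_M$ for the bundle-level restriction $H^*(E)\to H^*(E\setminus Z(E))$, the bundle map $\tilde\kappa$ gives a commutative square
\begin{equation*}
\begin{array}{ccc}
H^*_G(V) & \xrightarrow{\ r\ } & H^*_G(V\setminus Z)\\[2pt]
\big\downarrow{\scriptstyle\tilde\kappa^*} & & \big\downarrow{\scriptstyle\tilde\kappa^*}\\[2pt]
H^*(E) & \xrightarrow{\ r_M\ } & H^*(E\setminus Z(E)).
\end{array}
\end{equation*}
If $x\in\Ker r$, then $r_M(\tilde\kappa^* x) = \tilde\kappa^*(r(x)) = 0$, so $\tilde\kappa^* x$ is supported on $Z(E)$ in the sense of Definition \ref{def:support}. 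Now I pull back along the section $\sigma\colon M\to E$: since $Z(\sigma)=\sigma^{-1}(Z(E))$, the section restricts to a map $M\setminus Z(\sigma)\to E\setminus Z(E)$, giving a second naturality square for $\sigma$. Chasing $\tilde\kappa^* x$ through it shows $\sigma^*\tilde\kappa^* x$ lies in the kernel of $H^*(M)\to H^*(M\setminus Z(\sigma))$, i.e.\ is supported on the $Z$-locus $Z(\sigma)$.

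Finally I would identify $\sigma^*\tilde\kappa^* x$ with $\kappa^* x$. The composite $\tilde\kappa\circ\sigma\colon M\to EG\times_G V$ is a lift of $\kappa\colon M\to BG$ along the projection $\pi\colon EG\times_G V\to BG$; since $\pi$ induces an isomorphism on cohomology (again because $V$ is $G$-contractible) and $\pi\circ\tilde\kappa\circ\sigma = \kappa$, we get $\sigma^*\tilde\kappa^* = \kappa^*\circ(\pi^*)^{-1}$ as maps $H^*_G(V)\to H^*(M)$, which is exactly the identification of $x$ with the characteristic class $\kappa^* x$. Hence $\kappa^* x$ is supported on $Z(\sigma)$, and in particular if $\kappa^* x\neq 0$ the locus $Z(\sigma)$ cannot be empty — for otherwise $M\setminus Z(\sigma)=M$ and the restriction map would be the identity, forcing $\kappa^* x=0$. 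The only genuine subtlety is the first paragraph (functoriality of the associated-bundle pair and the behavior on complements); the rest is diagram chasing with naturality of restriction maps.
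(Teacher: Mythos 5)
Your argument is correct and complete: the naturality square for the bundle map $\tilde\kappa\colon E\to EG\times_G V$ covering $\kappa$, followed by pullback along the section and the identification $\sigma^*\tilde\kappa^*x=\kappa^*x$ via the homotopy equivalence $EG\times_G V\simeq BG$, is exactly the standard proof. The paper itself does not reprove this statement but cites it from Feh\'er--Rim\'anyi (Theorem 2.3 of their 2004 paper), where the argument is essentially the same Borel-construction naturality reasoning you give.
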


Our primary example is the contact group $G=\mathcal K^k(n,p)$ acting on the jet space $V=J^k(n,p)$, with $Z$ a $k$-determined contact singularity. Then
\[
\ker\big (r:H_G^*(V)\to H^*_G(V\backslash Z)\big) \se H_G^*(V)=\F_2[a_1\stb a_n,b_1\stb b_p].
\]
Despite the simple definition, avoiding ideals are difficult to calculate.

\subsection{Stable avoiding ideal}

The following definition is inspired by the stability property of the Segre-Stiefel-Whitney classes ((\ref{item:stable}) of Theorem \ref{thm:ssw}). Let 
\begin{equation}\label{eq:rhonp}
	\rho_{n,p}:\F_2[\www]\to \F_2[a_1\stb a_n, b_1\stb b_{p}]
\end{equation} 
be the map defined by making the homogeneous terms of the following formal expression equal:
\[
1+w_1+w_2+\ldots =\frac{1+b_1+\ldots +b_{p}}{1+a_1+\ldots +a_n},
\]
where $w_i, a_i, b_i$ have formal degree $i$.

\begin{definition} $\alpha\in\F_2[\www]$ is in the \emph{stable avoiding ideal} $\A_\eta$ of the singularity type $\eta\in J(n,n+l)$, if $\rho_{N,N+l}(\alpha)\in \A(\eta^k(N,N+l))$ for all $N\geq n$ and $k\gg0$. (Here $\eta(N,N+l)=\mathcal{K}.\tau(\eta)$ is the contact orbit of the trivial unfolding $\tau:J(n,n+l)\to J(N,N+l)$ of $\eta$).
\end{definition}

The complex version can be defined analogously. The elements of the stable avoiding ideal are obstructions to the existence of a smooth map $f$ with empty $\eta$-locus. Indeed, $j^kf$ is a section of the jet-bundle $J^k(M,N)$, so we can apply Proposition \ref{prop:avoiding_bundle} to get:
\begin{proposition}\label{prop:avoiding_is_obstruction}
	Let $\eta\in J^k(n,p)$ be a real contact singularity and $\al\in \A_\eta$ be an element of the stable avoiding ideal. Let $f:M^m\to N^n$ be a smooth map between real manifolds. If $$\al(w_1(f),\ldots,w_m(f))\neq 0,$$
	then $\eta(f)\neq \emptyset$. Here $w_i(f)=w_i(f^*TN\ominus TM)$, the $i$th Stiefel-Whitney class of the virtual normal bundle of $f$.
\end{proposition}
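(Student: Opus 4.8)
The plan is to reduce the statement to Proposition~\ref{prop:avoiding_bundle} applied to the principal bundle associated to the $k$-jet of $f$. First I would recall that for a smooth map $f:M^m\to N^n$ and $k\gg 0$, the $k$-jet $j^kf$ is a section of the jet bundle $J^k(M,N)\to M$. This jet bundle is associated to a principal $G$-bundle for $G=\mathcal{K}^k(m,n)$ (the contact group), whose classifying map $\kappa:M\to BG$ factors through the frame bundles of $TM$ and $f^*TN$; in particular the pullback $\kappa^*$ sends the equivariant generators $a_i\in H^*_G(V)$ to $w_i(TM)$ and $b_i$ to $w_i(f^*TN)$. The $\clos{\eta}$-locus $\eta(f)$ is then exactly the $Z$-locus $(j^kf)^{-1}(E(Z))$ for $Z=\eta^k(m,n)\subset V=J^k(m,n)$ the trivial unfolding of $\eta$ (here we use $m\geq n$, i.e.\ that $\eta$ is unfolded up to source dimension $m$; this is the reason the stable avoiding ideal is the right object).

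Next I would use the definition of the stable avoiding ideal. Since $\alpha\in\A_\eta$, by definition $\rho_{m,n}(\alpha)\in\A(\eta^k(m,n))=\ker\big(r:H^*_G(V)\to H^*_G(V\setminus Z)\big)$ for $k\gg 0$. Applying Proposition~\ref{prop:avoiding_bundle} with $x=\rho_{m,n}(\alpha)$, the class $\kappa^*\rho_{m,n}(\alpha)$ is supported on $\eta(f)$; in particular if it is nonzero then $\eta(f)\neq\emptyset$. So it remains to identify $\kappa^*\rho_{m,n}(\alpha)$ with $\alpha(w_1(f),\ldots,w_m(f))$. This is a bookkeeping computation: $\rho_{m,n}$ is defined by the formal identity $1+w_1+w_2+\cdots=(1+b_1+\cdots+b_n)/(1+a_1+\cdots+a_m)$, which under $\kappa^*$ becomes $1+\kappa^*w_1+\cdots=w(f^*TN)/w(TM)=w(f^*TN\ominus TM)$. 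Hence $\kappa^*$ applied to the image of $w_i$ under $\rho_{m,n}$ is precisely $w_i(f^*TN\ominus TM)=w_i(f)$, and since $\alpha$ is a polynomial in the $w_i$, $\kappa^*\rho_{m,n}(\alpha)=\alpha(w_1(f),\ldots,w_m(f))$. Combining with the hypothesis that this class is nonzero gives $\eta(f)\neq\emptyset$.

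The main obstacle I anticipate is not conceptual but bureaucratic: making precise the identification of the classifying map of the jet bundle $J^k(M,N)$ with the composite of the classifying maps of $TM$ and $f^*TN$, and checking that the equivariant cohomology $H^*_G(V)$ of the jet space really is the polynomial ring $\F_2[a_1,\ldots,a_m,b_1,\ldots,b_n]$ with $\kappa^*a_i=w_i(TM)$, $\kappa^*b_i=w_i(f^*TN)$ — this is where one uses that $G=\mathcal{K}^k$ is homotopy equivalent to $\GL(m)\times\GL(n)$ (up to the relevant approximation), so that $BG\simeq B\GL(m)\times B\GL(n)$ and $V$ is $G$-equivariantly contractible. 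One must also be slightly careful that the stability hypothesis lets us pass from the possibly-unstable orbit $\eta^k(m,n)$ to something whose avoiding ideal contains $\rho_{m,n}(\alpha)$; but that is exactly built into the definition of $\A_\eta$, so no extra work is needed there. Finally, one should note $\alpha$ being homogeneous is not required — the statement is about a single (possibly inhomogeneous) element — but the conclusion is cleanest when $\alpha$ is homogeneous of some degree $d\le m$, as otherwise the substitution simply lands in $\bigoplus_d H^d(M;\F_2)$.
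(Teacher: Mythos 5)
Your argument is correct and is essentially the paper's own: the paper proves this proposition in one line by observing that $j^kf$ is a section of the jet bundle $J^k(M,N)$ and applying Proposition \ref{prop:avoiding_bundle} to the trivial unfolding $\eta^k(m,n)$, exactly as you do, with $\rho_{m,n}(\alpha)$ lying in the avoiding ideal by the very definition of $\A_\eta$ and $\kappa^*$ converting the quotient variables into $w_i(f^*TN\ominus TM)$. Your fleshing out of the structure-group/classifying-map bookkeeping (homotopy equivalence of the relevant group with $\GL(m)\times\GL(n)$) is the standard justification implicitly used there, so no further comparison is needed.
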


Stable avoiding ideals are only known in few cases---notably, the avoiding ideal of the $A_2$ singularity in the real case mod 2 \cite[Theorem 1]{Terpai} and the $\Sigma^i$ in the complex case \cite[Theorem 1.1]{Pragacz1996}, \cite[Theorem 4.2]{FultonPragacz}. 

Given a partition $\la=(\la_1\stb \la_r)$, for Schur polynomials $s_\la(w_1,w_2,\ldots )\in \F_2[w_1,w_2,\ldots]$ (or in $\Z[c_1,c_2,\ldots]$) we fix the following convention:
\begin{equation}\label{eq:schurdef}
	s_\la(w)=\det(w_{\la_i+j-i})_{i,j=1\stb r}
\end{equation}
\begin{theorem}[Pragacz]\label{thm:FultonPragacz}
	For $i+l\geq0$ the stable avoiding ideal of $\clos{\Si^i}(l)$ is
	\[
	\A_{\clos{\Si^i}(l)}=\Z\bra s_{\la}:(i+l)^i\se \la\ket,
	\]
	where $\bra\ \ket$ denotes the generated  $\Z$-module.
\end{theorem}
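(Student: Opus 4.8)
\emph{Step 1 (reduction to a linear problem and stabilization).} The plan is to reduce to the Fulton--Pragacz description of the classes universally supported on a matrix degeneracy locus, and then stabilize. Since $\clos{\Si^i}$ is determined by the $1$-jet, for every $k\ge 1$ its $k$-jet in $J^k(N,N+l)$ is the preimage of the variety $D=D_{N-i}\subset J^1(N,N+l)=\Hom(\C^N,\C^{N+l})$ of matrices of rank $\le N-i$, under the equivariant affine bundle $J^k(N,N+l)\to J^1(N,N+l)$. The contact group $\mathcal K^k$ acts on $J^1$ through its linear part $G=\GL_N\times\GL_{N+l}$, onto which it deformation retracts, and on $J^k\setminus j^k\clos{\Si^i}$ it acts through equivariant vector bundles over $\Hom\setminus D$; hence the unstable avoiding ideal of $\clos{\Si^i}$ equals $\ker\bigl(H^*_G(\Hom)\to H^*_G(\Hom\setminus D)\bigr)$. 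As the $s_\la$ are stable and $\rho_{N,N+l}$ makes exactly the $s_\la$ fitting in the relevant hook become linearly independent, a class of $\F_2[\www]$ (resp.\ $\Z[\ccc]$) lies in the stable avoiding ideal iff its Schur expansion involves only $s_\la$ that land in the unstable avoiding ideal for all large $N$; so it suffices to identify the latter for each $N$.

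\emph{Step 2 (resolution and pushforward).} I would use the Grassmannian-of-kernels resolution $Z=\{(\varphi,K):K\in\Gr(i,\C^N),\ \varphi(K)=0\}$ with $\pi\colon Z\to\Hom$ the first projection: it is $G$-equivariant, proper, has image $D$, and restricts to an isomorphism over the locus of matrices of rank exactly $N-i$. Since $Z$ is the total space of the equivariant bundle $\Hom(\C^N/S,\C^{N+l})$ over the Grassmannian (with $S$ the tautological rank-$i$ subbundle), $H^*_G(Z)\cong H^*_G(\Gr(i,\C^N))$ is free over $H^*_G(pt)=\Z[a_1,\dots,a_N,b_1,\dots,b_{N+l}]$ on the equivariant Schubert classes. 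Every pushforward $\pi_*\alpha$ restricts to zero on $\Hom\setminus D$, so the ideal $\im\pi_*$ (an ideal by the projection formula) is contained in the avoiding ideal; moreover $\pi_*1=[D]=s_{(i+l)^i}$ by Thom--Porteous. Writing $\pi_*\alpha=\int_{\Gr(i,\C^N)}\alpha\cdot e\bigl(S^*\otimes\C^{N+l}\bigr)$ and evaluating the Grassmannian Gysin map on the Schubert basis is a Giambelli/Jacobi--Trudi computation which, in the stable limit, shows $\im\pi_*$ equals the $\Z$-span of $\{\,s_\la:(i+l)^i\se\la\,\}$; these Schur polynomials are distinct, hence $\Z$-independent.

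\emph{Step 3 (the reverse inclusion --- main obstacle).} It remains to show the avoiding ideal is no larger than $\im\pi_*$, which is the substantive point. The cleanest route I see is a Hilbert/Poincar\'e-series count: the quotient $\Z[\ccc]\big/\Z\bra s_\la:(i+l)^i\se\la\ket$ has $\Z$-basis the Schur polynomials $s_\mu$ with $\mu\not\supseteq(i+l)^i$, so it is enough to check that, degree by degree, the number of such $\mu$ equals $\dim H^*_G(\Hom\setminus D)$; the right-hand side can be computed by stripping off the rank strata of $\Hom$ one by one and chasing the resulting Gysin sequences (each rank-$r$ stratum being a single $G$-orbit $G/G_r$ with computable equivariant cohomology). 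The hard part will be exactly this comparison --- controlling the connecting maps in the successive Gysin sequences --- which is essentially the content of the Pragacz and Fulton--Pragacz theorems; alternatively one argues that the proper, generically one-to-one $\pi$ induces a surjection of equivariant Borel--Moore homology $H^{G,\mathrm{BM}}_*(Z)\twoheadrightarrow H^{G,\mathrm{BM}}_*(D)$, using that the fibres of $\pi$ over the lower strata are Grassmannians.

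\emph{Step 4 (the real case and conclusion).} The varieties $D$ and $Z$ and the map $\pi$ are defined over $\R$, so Steps 2--3 run verbatim with $\F_2$-coefficients, equivariant Stiefel--Whitney classes and mod-$2$ Gysin maps; alternatively the real statement follows from the complex one via Borel--Haefliger's real cycle class map (Theorems~\ref{thm:BH} and \ref{thm:bsy}), since the complexification of the real rank-$\le N-i$ locus is the complex one and $\rho_{N,N+l}$ is compatible with complexification. Combined with Step 1 this yields $\A_{\clos{\Si^i}(l)}=\Z\bra s_\la:(i+l)^i\se\la\ket$, whose lowest-degree term $s_{(i+l)^i}$ is the Thom polynomial $\tp_{\clos{\Si^i}}$, recovering the classical Giambelli--Thom--Porteous formula.
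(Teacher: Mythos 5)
First, note that the paper does not prove this statement at all: it is quoted from Pragacz \cite{Pragacz1996} and Fulton--Pragacz \cite[Theorem 4.2]{FultonPragacz}, and only the mod~2 variant (Theorem \ref{thm:mod2avoiding-sigma-i}) comes with a proof hint (``repeat Pragacz's proof mod 2, or modify \cite[Sec.~6]{FeherRimanyi2004}''). Measured against the cited proof, your Steps 1--2 are sound and standard: the reduction from the contact group acting on $J^k(N,N+l)$ to $\GL_N\times\GL_{N+l}$ acting on $\Hom(\C^N,\C^{N+l})$, the kernel-Grassmannian resolution $\pi\colon Z\to D$, the fact that $\im\pi_*$ is an ideal contained in the avoiding ideal, and the identification of $\im\pi_*$ in the stable limit with $\Z\bra s_\la:(i+l)^i\se\la\ket$ via the factorization formula. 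This gives the inclusion of the span into $\A_{\clos{\Si^i}(l)}$, i.e.\ the easy half.

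The genuine gap is your Step 3, which is exactly the content of the theorem and is not established. The Borel--Moore route hinges on the assertion that a proper, generically one-to-one map induces a surjection $H^{G,\mathrm{BM}}_*(Z)\to H^{G,\mathrm{BM}}_*(D)$; over $\Z$ this is not automatic and needs a real argument (e.g.\ a cellular-fibration/orbit-paving argument using that the fibres over the rank strata are Grassmannians), which you do not supply. The Poincar\'e-series route is likewise only a plan: it requires an integral computation of $H^*_G(\Hom\setminus D)$ and control of the connecting maps in the stratification Gysin sequences, which is precisely what you concede you cannot do. To see that this is not a formality: vanishing stratum-by-stratum is strictly weaker than the theorem --- restriction to the deepest stratum of the complement (the corank $i-1$ orbit) replaces $B-A$ by a difference of bundles of ranks $l+i-1$ and $i-1$, so it only forces $\la\supseteq(l+1)^i$, not $(i+l)^i$; for instance $s_{1,1}$ vanishes on every orbit of the complement of $\Si^2(0)$ yet is not universally supported on $\clos{\Si^2}$ (it is the Thom polynomial of $\Si^1$). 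So the extension data between strata (equivalently, normal Euler class/divisibility conditions as in \cite[Sec.~6]{FeherRimanyi2004} and Remark \ref{rem:avoidwithrestr}, or Pragacz's explicit linear-independence argument on test bundles with empty locus) is indispensable, and without some such input your proposal proves only that the stated span is contained in $\A_{\clos{\Si^i}(l)}$ and equals $\im\pi_*$, not the asserted equality. (Step 4 is fine as far as it goes, but it inherits the same gap; also recall the statement under review is the integral/complex one, the mod~2 case being the separate Theorem \ref{thm:mod2avoiding-sigma-i}.)
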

The analogous result holds over mod 2:
\begin{theorem}\label{thm:mod2avoiding-sigma-i}
	For $i+l\geq0$ the modulo 2 stable avoiding ideal of $\clos{\Si^i_\R}(l)$ is
	\[
	\A_{\clos{\Si^i_\R}(l)}=\F_2\bra s_{\la}:(i+l)^i\se \la\ket,
	\]
	where $\bra\ \ket$ denotes the generated $\F_2$-module.
\end{theorem}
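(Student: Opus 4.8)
The plan is to deduce the mod $2$ statement from Pragacz's integral result (Theorem \ref{thm:FultonPragacz}) by combining the Borel--Haefliger philosophy with a direct ideal-theoretic argument. First I would observe that the integral stable avoiding ideal $\A_{\clos{\Si^i}(l)}\se \Z[\ccc]$ is $\Z$-freely generated by the Schur polynomials $s_\la$ with $(i+l)^i\se\la$ (the rectangle of width $i+l$ and height $i$ contained in $\la$); this is exactly the content of Theorem \ref{thm:FultonPragacz}, and these Schur polynomials form part of a $\Z$-basis of $\Z[\ccc]$, so the reduction mod $2$ of this $\Z$-module is the $\F_2$-span $\F_2\bra s_\la : (i+l)^i\se\la\ket$, with no relations introduced. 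Thus it suffices to show that the mod $2$ stable avoiding ideal of $\clos{\Si^i_\R}(l)$ equals the mod $2$ reduction of the integral one.

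The inclusion ``$\supseteq$'' is the easier direction: by Theorem \ref{thm:BH} (Borel--Haefliger), if $\al=\sum a_I c_I$ lies in the integral avoiding ideal $\A(\Si^i_\C(N,N+l))$, i.e.\ $\al$ is supported on the $\Si^i_\C$-locus inside the jet space, then its mod $2$ reduction $\sum \bar a_I w_I$ is supported on $\Si^i_\R(N,N+l)$ — this is precisely the real cycle class map $\cl_\R$ applied to a supported class, together with the fact that the complexification $\Si^i_\C$ of $\Si^i_\R$ has matching codimension. Running this for all $N\ge i$ and passing to the stable limit gives that the mod $2$ reduction of each integral generator $s_\la$ lies in $\A_{\clos{\Si^i_\R}(l)}$.

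For the reverse inclusion ``$\subseteq$'' I would argue that $\A_{\clos{\Si^i_\R}(l)}$ can be no larger than the claimed $\F_2$-span. One route: the $\Si^i$ locus in the jet space of real maps has the same stratification and the same codimension as in the complex case, and the key structural input of Pragacz's proof — that the avoiding ideal is generated by the classes of the resolution via the Grassmann bundle of the kernel, whose pushforwards are exactly these Schur polynomials (the ``Gysin/Kempf--Laksov'' computation for degeneracy loci) — goes through verbatim over $\F_2$ using Stiefel--Whitney classes in place of Chern classes, since all the relevant cohomology of real Grassmann and flag bundles is the mod $2$ reduction of the complex one (via $\cl_\R$). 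Alternatively, and more cheaply, one can note that $\rho_{N,N+l}$ is the mod $2$ reduction of the analogous integral map (with degrees halved), so an element $\al\in\F_2[\www]$ lying in $\A_{\clos{\Si^i_\R}(l)}$ lifts — degree by degree, using that the $s_\la$ with $\la$ containing the rectangle together with a complementary set of $s_\mu$ form a $\Z$-basis — to a candidate in $\Z[\ccc]$ whose image in each $H^*_G(J^k(N,N+l);\F_2)$ vanishes on the complement of $\Si^i_\R$; one then invokes the Borel--Haefliger isomorphism between the equivariant mod $2$ cohomology of the real jet space (with its real Grassmann-bundle structure) and the mod $2$ reduction of the complex one to conclude the lift is genuinely supported on $\Si^i_\C$, hence lies in the integral avoiding ideal by Theorem \ref{thm:FultonPragacz}, so $\al$ is in its mod $2$ reduction.

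The main obstacle is making the reverse inclusion fully rigorous: unlike the forward direction, ``supported mod $2$'' does not automatically lift to ``supported over $\Z$'', so one genuinely needs to control the relevant equivariant cohomology groups. The cleanest fix is to check that the localization sequence $H^*_G(V\setminus \Si^i)\to H^*_G(V)$ over $\F_2$ is obtained from the complex one by the halving-degrees/reduction-mod-$2$ operation, which holds because $\Si^i_\C$ admits a $G$-equivariant resolution by a Grassmann bundle whose real points realize the corresponding real resolution and whose cohomology is torsion-free with mod $2$ reduction computing $H^*(\,\cdot\,(\R);\F_2)$ — this is exactly the setting where $\cl_\R$ of Theorem \ref{thm:bsy} behaves well. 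Once this compatibility is in hand the theorem follows formally.
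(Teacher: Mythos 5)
Your main route for the reverse inclusion --- rerunning Pragacz's Grassmann-bundle resolution/Gysin computation verbatim over $\F_2$ with Stiefel--Whitney classes in place of Chern classes --- is exactly the paper's proof, which consists of the one-line instruction to repeat the proof of \cite[Theorem 1.1]{Pragacz1996} modulo 2 or to modify the calculation of \cite[Sec.~6]{FeherRimanyi2004}; note that this argument yields both inclusions at once, so your separate Borel--Haefliger step for ``$\supseteq$'' is redundant (and, as stated, leans on Theorem \ref{thm:BH}, which only covers fundamental classes). Your ``cheaper'' lifting alternative is, as you acknowledge, the gappy part, but since you fall back on the first route the proposal is essentially correct and follows the paper's approach.
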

\begin{proof} One can  repeat the proof of  \cite[Theorem 1.1]{Pragacz1996} modulo 2, or modify the calculation given in \cite[Sec. 6]{FeherRimanyi2004}.
\end{proof}

\begin{theorem}[Terpai] \label{thm:Terpai}
	For $l\geq0$ the stable avoiding ideal of $\clos{A_2(l)}$ is
	\[\A_{\clos{A_2}(l)}=\F_2\bra s_\la: (l+1)^2\se \la\ket,\]
	where $\bra\ \ket$ denotes the generated $\F_2$-module.
\end{theorem}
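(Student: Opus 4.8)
The plan is to reduce the computation of the avoiding ideal to a linear-algebra problem over $\F_2[\www]$ by combining what we already know about the $\Si^i$ ideals with the cusp stratification. Recall that $\clos{A_2}(l)\subset \clos{\Si^1}(l)$ and that the complement $\clos{\Si^1}(l)\setminus \clos{A_2}(l)$ is exactly the set of fold points, i.e.\ the $\Si^{1,0}$ stratum. So I would first set up the two restriction maps in equivariant cohomology fitting into the diagram $H^*_G(V)\to H^*_G(V\setminus \clos{\Si^1})\to \ldots$, and observe $\A_{\clos{A_2}(l)}=\ker\big(H^*_G(V)\to H^*_G(V\setminus\clos{A_2}(l))\big)$, with the obvious inclusions $\A_{\clos{\Si^2}(l)}\supseteq \A_{\clos{A_2}(l)}\supseteq \A_{\clos{\Si^1}(l)}$ coming from $\clos{\Si^2}(l)\subset \clos{A_2}(l)\subset\clos{\Si^1}(l)$ (note $\A_{\clos{\Si^1}(l)}=\F_2\bra s_\la:(l+1)\se\la\ket$ by Theorem \ref{thm:mod2avoiding-sigma-i}, which is contained in the claimed ideal). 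This sandwiches the answer; the content is to pin it down exactly.

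Next I would produce the two inclusions separately. For \textbf{``$\supseteq$'' (that every $s_\la$ with $(l+1)^2\se\la$ is an $A_2$-obstruction)}: I would use the mod 2 Thom polynomial $\tp_{A_2}$ together with its Segre-SW enhancement $\ssu_{A_2}$ from Theorem \ref{thm:ssw}. Since these lie in $\A_{\clos{A_2}(l)}$ by part \eqref{item:support}, and since $\A_{\clos{A_2}(l)}$ is an ideal stable under the $\GL\times\GL$-module structure (equivalently, closed under the operations that generate Schur polynomials from a given one via Pieri/Giambelli), it suffices to exhibit one Schur function $s_{(l+1)^2}$ in the ideal and then take its ``Schur-closure''. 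The mod 2 Thom polynomial of $A_2(l)$ is known (Ronga's formula, mod 2): $\tp_{A_2(l)}=s_{(l+1,l+1)}+\text{lower Schur terms with partitions }\subseteq\text{a }2\times(l+1)\text{ box shifted}$, and more precisely the leading Schur term is $s_{(l+1)^2}$. Combined with the fact that $\A_{\clos{\Si^1}(l)}=\F_2\bra s_\la:\la_1\ge l+1\ket$ is already in the ideal, a downward induction on partitions in dominance order peels off the lower terms of $\tp_{A_2}$, leaving $s_{(l+1)^2}\in\A_{\clos{A_2}(l)}$; then the Schur-module generated by $s_{(l+1)^2}$ is exactly $\F_2\bra s_\la:(l+1)^2\se\la\ket$, giving ``$\supseteq$''.

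For \textbf{``$\subseteq$'' (that nothing else is an obstruction)}: I would use the geometry of the fold locus. An $\alpha\in\A_{\clos{A_2}(l)}$ must restrict to zero on $H^*_G(V\setminus\clos{A_2}(l))$; the $\Si^{1,0}$-locus sits inside this complement as a smooth $G$-invariant submanifold with computable normal data, so a Gysin/localization argument (restricting further to the open stratum $\Si^{1,0}$ and comparing with $\A_{\clos{\Si^1}(l)}$, whose complement is $\Si^0$) forces any such $\alpha$ to already vanish ``modulo $\A_{\clos{\Si^1}(l)}$'' on a class represented by the fundamental class data of the fold stratum; this constrains $\alpha$ to lie in the span $\F_2\bra s_\la:(l+1)^2\se\la\ket$. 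Equivalently one can run the argument of \cite[Sec.\ 6]{FeherRimanyi2004} for $\Si^i$ verbatim but with the $A_2$-resolution (a $\P^1$-bundle over the resolution of $\Si^1$), which expresses the relevant restriction map as multiplication by an explicit polynomial whose ``annihilator'' is precisely the Schur-span above.

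\textbf{Main obstacle.} The delicate point is the ``$\subseteq$'' direction: controlling the restriction $H^*_G(V)\to H^*_G(\Si^{1,0})$ well enough to conclude that the kernel is \emph{no larger} than $\F_2\bra s_\la:(l+1)^2\se\la\ket$. Over $\Z$ (the complex case) this is Pragacz-type machinery for $\Si^i$ but $A_2$ is not a $\Si^i$, so one genuinely needs the $A_2$-resolution and a mod 2 residue computation on a projective bundle; keeping track of the $2$-torsion and making sure the resolution map induces the expected map on cohomology mod 2 is where the real work lies. I would expect the cleanest route is to cite/adapt Terpai's own argument in \cite{Terpai} for exactly this step, and spend the bulk of the written proof on the ``$\supseteq$'' direction and the reduction above, which is self-contained given Theorems \ref{thm:ssw} and \ref{thm:mod2avoiding-sigma-i}.
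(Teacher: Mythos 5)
First, a point of comparison: the paper does not prove this statement at all --- it is quoted from Terpai's paper \cite{Terpai}, with only the comment that Terpai's ideal generators are easily seen to be equivalent to the Schur-span description, and Remark \ref{rem:avoidwithrestr} noting that the result could also be obtained by the restriction equation method because the normal Euler condition of \cite[Definition 3.3]{FeherRimanyi2004} holds. So your attempt to reconstruct a proof is going beyond the paper, but as written it has genuine gaps, and its hardest step is in any case deferred back to \cite{Terpai}.

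The concrete problems are in your ``$\supseteq$'' direction. (1) Your sandwich of ideals is inverted. By Proposition \ref{prop:avoiding_complicated} (smaller locus $\Rightarrow$ smaller avoiding ideal), the inclusions $\clos{\Si^2}(l)\subset\clos{A_2}(l)\subset\clos{\Si^1}(l)$ give $\A_{\clos{\Si^2}(l)}\se\A_{\clos{A_2}(l)}\se\A_{\clos{\Si^1}(l)}$, not the reverse. In particular $\A_{\clos{\Si^1}(l)}=\F_2\bra s_\la:\la_1\geq l+1\ket$ is \emph{not} contained in $\A_{\clos{A_2}(l)}$: the class $w_{l+1}=s_{(l+1)}$ does not vanish on the fold stratum, so it is no obstruction for fold maps (otherwise every non-immersible manifold would admit no fold map in that codimension). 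Consequently your ``peel off lower Schur terms modulo $\A_{\clos{\Si^1}(l)}$'' step is not available; fortunately it is also unnecessary, since mod $2$ one has exactly $\tp_{A_2(l)}=s_{(l+1)^2}$. (2) The claim that once $s_{(l+1)^2}\in\A_{\clos{A_2}(l)}$ the whole span $\F_2\bra s_\la:(l+1)^2\se\la\ket$ follows by taking a ``Schur-closure'' is unjustified and, read as ``the ideal generated by $s_{(l+1)^2}$,'' false. The avoiding ideal is closed under multiplication by arbitrary polynomials, but the ideal generated by $s_{(l+1)^2}$ is strictly smaller than the span: e.g.\ in degree $2l+3$ that ideal contains only $s_{(l+1)^2}s_1=s_{(l+2,l+1)}+s_{(l+1,l+1,1)}$, not $s_{(l+2,l+1)}$ itself, and even adding the Thom polynomials $s_{(l'+1)^2}$ of all zero-unfoldings (which do lie in $\A_{\clos{A_2}(l)}$ by Remark \ref{flat0}) does not repair this. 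There is no evident closure of $\A_\eta$ under Pieri/Giambelli-type operations, so producing \emph{every} $s_\la$ with $(l+1)^2\se\la$ genuinely requires a computation of the restriction to the complementary orbits $A_0$, $A_1$ (the restriction equation method, as in \cite{FeherRimanyi2004}, using the normal Euler condition), or Terpai's original argument --- the same machinery you invoke only for ``$\subseteq$''. Since that kernel computation is where both inclusions actually come from, and you propose to cite \cite{Terpai} for it, your proposal does not constitute an independent proof.
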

Terpai gives ideal generators of $\A_{\clos{A_2}(l)}$ but it is easy to see that it is equivalent to the description above.
\begin{remark} \label{rem:avoidwithrestr}
	Terpai's result can be obtained by the restriction equation method as well, since the normal Euler condition of \cite[Definition 3.3]{FeherRimanyi2004} holds. Similarly, for the complex cusps the stable avoiding ideal can be given as the kernel of a ring homomorphism, but a generating system has not been calculated yet. This is the full list of known stable avoiding ideals of singularities and it is not clear if any other ones can be calculated with the present methods.
\end{remark}

In these cases the image of the stable avoiding ideal under $\rho_{n,n+l}$ generates the avoiding ideal of $\eta^k(n,n+l)\subset J^k(n,n+l)$, see Pragacz \cite[Theorem 1.1]{Pragacz1996}. We don't expect this to hold for general singularities. Also we have a Borel-Haefliger type connection between the stable avoiding ideal of the complex and the real $\Sigma^i$'s: the second one is obtained by the substitution $c_i\mapsto w_i$. It would be interesting to see which other singularities share this property.

Comparing Theorems \ref{thm:mod2avoiding-sigma-i} and \ref{thm:Terpai} we obtain
\begin{corollary} \label{cor:a2-si2}
	For $l\geq0$ the following stable avoiding ideals are equal:
	\begin{equation}\label{eq:A2S2} \A_{\clos{A_2}(l)}=\A_{\clos{\Si^2}(l-1)}. \end{equation}
\end{corollary}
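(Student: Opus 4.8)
The plan is to simply compare the two explicit descriptions of the stable avoiding ideals provided in Theorems~\ref{thm:mod2avoiding-sigma-i} and~\ref{thm:Terpai}. By Theorem~\ref{thm:Terpai}, $\A_{\clos{A_2}(l)}=\F_2\bra s_\la: (l+1)^2\se \la\ket$, i.e.\ the $\F_2$-span of all Schur polynomials $s_\la$ whose partition $\la$ contains the rectangular partition $(l+1)^2$ (two rows of length $l+1$). By Theorem~\ref{thm:mod2avoiding-sigma-i} applied with $i=2$ and relative codimension $l-1$ (note $i+l-1 = l+1\geq 0$ precisely when $l\geq 0$, which matches the hypothesis), we have $\A_{\clos{\Si^2_\R}(l-1)}=\F_2\bra s_\la: (2+(l-1))^2\se \la\ket = \F_2\bra s_\la: (l+1)^2\se \la\ket$. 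The two index sets of generating Schur polynomials are literally identical, so the two $\F_2$-modules coincide.

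First I would verify that the codimension bookkeeping lines up: in Terpai's theorem the superscript $2$ in $(l+1)^2$ is the number of rows (reflecting that $A_2$ has a local algebra of dimension related to $\Si^1$, a corank-one phenomenon), while in Pragacz's $\Si^i$ theorem the superscript $i$ is likewise the number of rows. Setting $i=2$ in the $\Si^i$ formula gives exactly a two-row rectangle, and the row length there is $i+l'$ where $l'$ is the relative codimension of the $\Si^i$ singularity; equating $i+l' = l+1$ with $i=2$ forces $l' = l-1$, which is why the statement reads $\A_{\clos{\Si^2}(l-1)}$ rather than $\A_{\clos{\Si^2}(l)}$. I would also note that the range conditions are compatible: $l\geq 0$ in Theorem~\ref{thm:Terpai} corresponds to $i+l' = 2 + (l-1) = l+1 \geq 1 > 0$ in Theorem~\ref{thm:mod2avoiding-sigma-i}, so both hypotheses hold simultaneously for all $l\geq 0$.

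The proof is then a one-line identification, so there is essentially no obstacle beyond being careful with the index shift. The only point that warrants a sentence of justification is that the Schur polynomials $\{s_\la\}$, ranging over all partitions $\la$, are $\F_2$-linearly independent in $\F_2[\www]$, so that the $\F_2$-module $\F_2\bra s_\la : (l+1)^2\se\la\ket$ is unambiguously determined by the set of admissible partitions $\la$; this is immediate from the determinantal definition~\eqref{eq:schurdef} and the fact that Schur polynomials form a basis of the ring of symmetric functions over $\Z$, hence over $\F_2$ after reduction. Given that, the equality~\eqref{eq:A2S2} of the two ideals follows because they are spanned by the same basis elements, completing the argument.
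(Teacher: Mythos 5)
Your argument is exactly the paper's: the corollary is obtained by comparing Theorem \ref{thm:mod2avoiding-sigma-i} (with $i=2$ and relative codimension $l-1$, giving the rectangle $(l+1)^2$) against Theorem \ref{thm:Terpai}, and your index bookkeeping is correct. The only nitpick is that the hypothesis $i+l'\geq 0$ of Theorem \ref{thm:mod2avoiding-sigma-i} holds for $l\geq -1$, not ``precisely when $l\geq 0$'' as you write, but this is harmless since the corollary only asserts the case $l\geq 0$.
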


\begin{remark} \label{rmk:si2-1}
	Notice that Theorems \ref{thm:FultonPragacz} and \ref{thm:mod2avoiding-sigma-i} are valid for $\Si^2(-1)$, cf.\ Remark \ref{rem:duality}. Allowing the $l=-1$ case for $\Si^2$ will make our discussion easier.
\end{remark}

\subsection{Finding elements of the stable avoiding ideal}
Finding the stable avoiding ideal of other singularities is a challenge in general, but we have methods to find elements in it. The Thom polynomial $\tp_{\eta(l)}$ is in the stable avoiding ideal of $\A_{\eta(l)}$---indeed, $\tp_{\eta(l)}$ is \quot{the lowest degree element} of $\A_{\eta(l)}$, essentially by definition. Similarly, the homogeneous terms of the Segre-SW class $\ssu_{\eta(l)}$---see Section \ref{sec:ssw4eta}---are also elements of the stable avoiding ideal by Theorem \ref{thm:ssw} (\ref{item:support}). The following is also an immediate consequence of the definition of the avoiding ideal:
\begin{proposition}\label{prop:avoiding_complicated} If the singularity $\xi$ is in the closure of $\eta$, then $\A_{\xi}\subset \A_{\eta}$, in particular $\tp_\xi\in \A_{\eta}$. \end{proposition}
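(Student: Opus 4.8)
The plan is to reduce the claim to one monotonicity property of avoiding ideals and then to transport it through trivial unfoldings. The property I would isolate first is elementary: \emph{if $Z_1\se Z_2$ are $G$-invariant subvarieties of a $G$-space $V$, then $\A(Z_1\se V)\se\A(Z_2\se V)$}. Indeed $V\setminus Z_2\se V\setminus Z_1$, so the open inclusion induces a restriction $\iota^*\colon H^*_G(V\setminus Z_1)\to H^*_G(V\setminus Z_2)$, and the restriction maps $r_i\colon H^*_G(V)\to H^*_G(V\setminus Z_i)$ of Definition~\ref{def:avoiding} satisfy $r_2=\iota^*\circ r_1$; hence $\ker r_1\se\ker r_2$. (As usual one takes closures so that the loci are closed; since the contact orbit $\eta^k(N,N+l)$ is open in its closure, this does not affect the ideal.)

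Next I would transport the hypothesis to the finite jet levels. That $\xi$ lies in the closure of $\eta$ means $\clos\xi\se\clos\eta$ inside $J^k(n,n+l)$ once $k$ is large enough to $\mathcal K$-determine both. The $a$-fold trivial unfolding $\tau\colon J^k(n,n+l)\to J^k(N,N+l)$ (with $N=n+a$) carries $\mathcal K$-orbits into $\mathcal K$-orbits and orbit closures into orbit closures -- this is the same compatibility underlying the fact that $\eta$ and its unfoldings share a Thom polynomial, cf.\ \cite{FeherRimanyi2012} -- so $\clos\xi\se\clos\eta$ gives
\[
\clos{\xi^k(N,N+l)}\ \se\ \clos{\eta^k(N,N+l)}\qquad\text{in } J^k(N,N+l)
\]
for all $N\ge n$ and $k\gg0$. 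Applying the monotonicity property with $V=J^k(N,N+l)$, $Z_1=\clos{\xi^k(N,N+l)}$ and $Z_2=\clos{\eta^k(N,N+l)}$ shows that for each such $N,k$ a class $\rho_{N,N+l}(\alpha)$ supported on the $\xi$-locus is also supported on the $\eta$-locus. By the definition of the stable avoiding ideal this is exactly $\A_\xi\se\A_\eta$.

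The last clause is then immediate: as recorded just before the statement, $\tp_\xi$ is the lowest-degree element of $\A_\xi$ (essentially by definition; it is also the lowest-degree term of $\ssu_\xi$ by Theorem~\ref{thm:ssw}\,(\ref{item:lowest}) and lies in $\A_\xi$ by Theorem~\ref{thm:ssw}\,(\ref{item:support})), so $\tp_\xi\in\A_\xi\se\A_\eta$. I expect the only step requiring genuine care -- everything else being formal manipulation of restriction maps -- to be the compatibility of orbit closures with trivial unfolding, i.e.\ the inclusion $\clos{\mathcal K.\tau(\xi)}\se\clos{\mathcal K.\tau(\eta)}$ used above; this is standard in singularity theory (unfolding preserves the adjacency of contact classes), but it is the hypothesis that makes the monotonicity argument run, so it is the point one should verify explicitly.
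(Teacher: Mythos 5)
Your argument is correct and is essentially the paper's: the proposition is stated there as an immediate consequence of the definition, the content being exactly the monotonicity of supports you prove ($\clos{\xi}\se\clos{\eta}$ gives $V\setminus\clos{\eta}\se V\setminus\clos{\xi}$, so the restriction kernels nest), combined with the standard fact that contact adjacency is preserved under trivial unfolding so that the inclusion holds at every level $N,k$ demanded by the definition of the stable avoiding ideal, and with $\tp_\xi\in\A_\xi$ as recorded just before the statement. One small caution: your parenthetical claim that replacing the contact orbit by its closure ``does not affect the ideal'' is not a general fact --- for a non-closed invariant subset $Z$ the kernel of restriction to $H^*_G(V\setminus Z)$ can be strictly smaller than the kernel for $V\setminus\clos{Z}$ --- but it is harmless here, since by the paper's convention a contact singularity is a closed, contact-invariant algebraic set, so the closed loci are what $\A$ refers to in the first place.
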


Further elements of the stable avoiding ideal can be constructed using the lowering operators $\flat(i)$ introduced in \cite{FeherRimanyi2007}. First we recall another variant of unfolding, the \emph{zero-unfolding}:
\begin{definition} For  a holomorphic map germ $g:(\C^n,0)\to (\C^{n+l},0)$ the zero-unfolding  $\delta g: (\C^{n},0)\to (\C^{n+l+1},0)$ is defined as
	\[ \delta g(x_1,\dots,x_n):=(g(x_1,\dots,x_n),0).\]
\end{definition}
Although the main invariant---the quotient algebra---of $\delta g$ is the same as of $g$, its codimension increases \cite{Mather2012}. Given a singularity $\eta(k)$ of codimension $k$, by repeatedly zero-unfolding  $\eta(k)$, one obtains a singularity $\eta(l)$ with relative codimension $l\geq k$. There is a strong connection between the stable avoiding ideals of $\eta:=\mathcal{K}g$ and $\eta^\sharp:=\mathcal{K}(\delta g)$. To understand this we recall the notion of lowering operators from \cite{FeherRimanyi2007}:
\begin{definition} Let $p(c_1,c_2,\dots,c_i)$ be a polynomial. Then $p^{\flat(j)}$ is defined as the coefficient of $t^j$ in $p(c_1+t,c_2+tc_1,\dots,c_i+tc_{i-1})$.
\end{definition}
One of the key results of \cite{FeherRimanyi2007} is the following:

\begin{theorem}\label{thm:lowering} If $p\in \A_{\eta^\sharp}$ then $p^{\flat(j)}\in \A_{\eta}$ for $j\geq0$. \end{theorem}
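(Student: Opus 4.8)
The plan is to deduce Theorem~\ref{thm:lowering} from the geometric interpretation of the lowering operator $\flat(j)$ in terms of zero-unfolding, exactly in the spirit of \cite{FeherRimanyi2007}. First I would recall the key algebraic-geometric fact behind the $\flat(j)$ operation: if $c_i$ is a Chern (or Stiefel-Whitney) class of a virtual bundle $\xi$, then $c_i + t c_{i-1}$ is the $i$th class of $\xi \oplus L$ where $L$ is a line bundle with $c_1(L) = t$; hence $p^{\flat(j)}$ extracts, from the class $p$ evaluated on $\xi \oplus L$, the coefficient of $c_1(L)^j$. Geometrically, zero-unfolding $\delta g$ of a germ $g:(\C^n,0)\to(\C^{n+l},0)$ is the composition of $g$ with the inclusion $\C^{n+l}\hookrightarrow \C^{n+l+1}$, and the $\eta^\sharp$-locus of a map $f$ into $N$ is the $\eta$-locus of the same map viewed into $N\times L$ (a line bundle over $N$) composed with the zero section — equivalently, the normal bundle picks up an extra line bundle summand.

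The main steps are then: (1) Set up the universal situation. Take the universal bundle $E \to BG$ carrying $V = J^k(n,p)$ and the orbit $Z = \eta^\sharp \subset V$, so $\A_{\eta^\sharp} = \ker(H^*_G(V)\to H^*_G(V\setminus Z))$ in the stable sense. Introduce an auxiliary $\C^*$ (or $\mathbb{Z}/2$-line-bundle) factor corresponding to the extra unfolding coordinate, giving a group $G' = G\times \C^*$ acting on a jet space $V'$ for maps into one dimension higher, with $\eta\subset V'$ the orbit of the non-zero-unfolded germ. (2) Identify the restriction of the $\eta$-avoiding ideal under the map that forgets the $\C^*$-direction; this is where the substitution $c_i\mapsto c_i + t c_{i-1}$ appears — pulling back a class from the $p$-dimensional target to the $(p{+}1)$-dimensional one tensors the relevant bundle by the tautological line bundle of the $\C^*$-factor, realizing the $\flat$-substitution on generators. (3) Observe that if $p$ vanishes on $V'\setminus \eta$, then its pullback (with the substituted variables and the formal parameter $t$) vanishes on the complement of $\eta^\sharp$ in the enlarged space; since the $\eta^\sharp$-locus sits inside the preimage of the $\eta$-locus, extracting the $t^j$-coefficient — which is a genuine cohomology operation commuting with restriction — yields a class $p^{\flat(j)}$ supported on $\eta^\sharp(f)$ for every $f$, i.e. $p^{\flat(j)}\in\A_{\eta^\sharp}$. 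Wait — I must be careful about the direction: the theorem says $p\in\A_{\eta^\sharp}$ implies $p^{\flat(j)}\in\A_{\eta}$, so the roles of $\eta$ and $\eta^\sharp$ in the geometric picture are that $\eta^\sharp$ is the \emph{lower-dimensional-target} singularity and $\eta$ its zero-unfolding; the substitution then goes from the classes adapted to $\eta^\sharp$ to those adapted to $\eta$, and the support statement transfers because $\eta(f) \supseteq$ (preimage of) $\eta^\sharp$ of the de-unfolded map. (4) Finally, promote the statement from a fixed finite-dimensional approximation to the stable avoiding ideal by the usual limiting argument ($N\to\infty$, $k\gg 0$), using that both $\rho_{N,N+l}$ and $\flat(j)$ are compatible with stabilization.

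The hard part will be step (2)–(3): making precise the claim that the geometric zero-unfolding operation corresponds, on the level of equivariant cohomology and avoiding ideals, exactly to the substitution $c_i\mapsto c_i+tc_{i-1}$ followed by taking the $t^j$-coefficient, and that this operation respects supports. This is essentially the content of the relevant results of \cite{FeherRimanyi2007}, and I would either cite their argument directly or reproduce it, checking that it is insensitive to the coefficient ring (so that it applies mod $2$ as well as over $\Z$); the $\mathbb{Z}/2$ case is formally identical since the tautological-line-bundle trick and the $t$-coefficient extraction are purely formal manipulations of Stiefel-Whitney classes. A secondary technical point is ensuring the transversality/genericity needed so that the $\eta^\sharp$-locus of an unfolded map is literally controlled by the $\eta$-locus of the de-unfolded map; but since we only need containment of supports (not equality of loci), the abstract Proposition~\ref{prop:avoiding_bundle} suffices and no genericity of $f$ is required.
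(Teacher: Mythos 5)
You should first note that the paper offers no proof of this statement at all: it is quoted as one of the key results of \cite{FeherRimanyi2007}, with only the $j=0$ case commented on in Remark \ref{flat0}. So your plan of citing or reproducing the argument of \cite{FeherRimanyi2007} is in principle the right move, and your toolkit is the correct one: the line-bundle twist realizing the substitution $c_i\mapsto c_i+tc_{i-1}$, the fact that extracting the $t^j$-coefficient commutes with restriction to a complement, and stabilization at the end.

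However, your sketch inverts the roles of $\eta$ and $\eta^\sharp$, and this is a substantive error, not a relabelling. In the paper $\eta=\mathcal{K}g$ is the original singularity and $\eta^\sharp=\mathcal{K}(\delta g)$ is its zero-unfolding, i.e.\ the one living in the jet space whose target dimension is one \emph{higher}; you assert the opposite (``$\eta^\sharp$ is the lower-dimensional-target singularity and $\eta$ its zero-unfolding''), and accordingly your support-transfer step appeals to the ``de-unfolded map'' of a given map, which does not exist in general. The statement your setup would actually prove---that lowering operators send the avoiding ideal of a singularity into that of its zero-unfolding---is false: already for $j=0$ it would give $\A_{\clos{A_2}(l)}\subseteq\A_{\clos{A_2}(l+1)}$, contradicting Theorem \ref{thm:Terpai} (for instance $s_{(l+1,l+1)}$ lies in the first module but not in the second), and degree bookkeeping rules it out as well, since $\flat(j)$ lowers degree by $j$ while zero-unfolding raises the codimension. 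The correct mechanism runs the other way: given $f:M\to N$ in the codimension of $\eta$, one \emph{unfolds} the map, composing $f\times\id:M\times\RP^\infty\to N\times\RP^\infty$ with the zero section of the tautological line bundle $L$ (so that $t=w_1(L)$ is a free polynomial variable); the germ of the composite at each point is the zero-unfolding of the germ of $f$ twisted by $L$, hence its $\eta^\sharp$-locus is $\eta(f)\times\RP^\infty$ and its virtual normal bundle has total class $w(\nu_f)(1+t)$. For $p\in\A_{\eta^\sharp}$ the class $p$ evaluated there equals $\sum_j p^{\flat(j)}(\nu_f)\,t^j$ and is supported on $\eta(f)\times\RP^\infty$ by Proposition \ref{prop:avoiding_bundle}; since $H^*\bigl((M\setminus\eta(f))\times\RP^\infty\bigr)\cong H^*(M\setminus\eta(f))[t]$, each coefficient $p^{\flat(j)}(\nu_f)$ is supported on $\eta(f)$. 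To get the statement at the level of the (stable) avoiding ideal rather than for maps, one runs the same argument equivariantly, using the inclusion $J^k(n,p)\hookrightarrow J^k(n,p+1)$, $j\mapsto(j,0)$, equivariant along $\GL_n\times\GL_p\times\GL_1\subset\GL_n\times\GL_{p+1}$, and the fact that zero-unfolding preserves the local algebra, so the preimage of $\eta^\sharp$ under this inclusion is contained in $\eta$; this pullback realizes exactly the $\flat$-substitution on quotient variables, and the extra $\GL_1$ contributes the polynomial variable $t$. With the roles of $\eta$ and $\eta^\sharp$ restored this way, your steps (1)--(4) do assemble into the intended argument of \cite{FeherRimanyi2007}; as written, they prove the wrong implication.
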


For several singularities $\eta$ the Thom polynomial $\tp_{\eta(l)}$ is known for all $l$, see \cite{FeherRimanyi2012} and \cite{Kazarian2017}. Therefore we can obtain several elements in $\A_{\eta(l-k)}$ by applying the various lowering operators $k$ times. Using these lowering operators seems to be the most prolific method to obtain elements in the stable avoiding ideal. There are other methods: in the complex case we can use restriction equations for low degree (\cite{FeherRimanyi2004}), and in the real case we can use the fact that $\A_{\eta}$ is closed under the Steenrod operations.

\begin{remark} \label{flat0} The operator $\flat(0)$ is the identity, however Theorem \ref{thm:lowering} is useful even in this case: $\A_{\eta^\sharp}\subset \A_{\eta}$. The proof of this case is immediate.
\end{remark}

\section{Obstructions for fold and Morin maps}\label{sec:obstructions_fold_morin}
In this section we give lower bounds for the fold numbers and Morin numbers of real projective spaces. First we give some general definitions:
\subsection{Lower bounds for the $\eta$-avoiding number}
For a smooth manifold $M$, we will denote
\begin{equation}\label{eq:wb}
	\bar{w}(M)= \frac{1}{w(M)}.
\end{equation}
\begin{definition} Let $M$ be a compact $n$-dimensional smooth manifold and $\eta$ a contact singularity. Then we call  the smallest $k\geq 0$ such that there is a smooth map from $M$ to $\R^{n+k}$ with no $\eta$-points the \emph{$\eta$-avoiding number} $\alpha(M,\eta)$.
	For $\eta=A_2$ we 
	call it the \emph{fold number}, and for  $\eta=\Sigma^2$ we 
	call it the \emph{Morin number} of $M$.
\end{definition}
The case of $\eta=\Sigma^1$ ---immersions--- has been extensively studied, quite famously in \cite{Cohen1985}; see \cite{Davis} for a compilation of many such results and references therein. In this paper we make  steps to expand these results to other singularities and give lower bounds for the fold and Morin numbers.

\bigskip

An obvious upper bound for $\alpha(M,\eta)$ can be obtained from the observation that if the expected dimension of the locus is negative then for generic $f$ the locus is empty. A simple lower bound for the $\eta$-avoiding number can be given using the Thom polynomials $\tp_{\eta(l)}$ (see Section \ref{sec:tp}):
\begin{definition}
	Let $M$ be a compact $n$-dimensional smooth manifold and $\eta$ a contact singularity. Then
	\[ \tau(M,\eta):=\min \{l\geq l_0: \tp_{\eta(j)}(\bar w(M))= 0 \text{ for all } j\geq l \},\]
	where $l_0$ is the minimal relative codimension where $\eta$ can appear.
\end{definition}
\begin{remark}
	For example for $A_2$ we have $l_0=0$, for $III_{23}$  we have $l_0=1$ and for $\Sigma^2$ we have $l_0=-1$ and more generally for $\Sigma^i$, $l_0=-i+1$. For contact singularities it is more natural to treat the $l\geq 0$ and $l\leq 0$ cases separately. For instance, $A_2(l)$ is also defined for $l<0$, but $A_2(l<0)$ are not related to the $l\geq 0$ cases via zero-unfolding. In this paper, we focus on the $l\geq 0$ cases, the only $l<0$ case that we will consider is the case of $\Si^2(-1)$, cf.\ Remark \ref{rmk:si2-1}.  
\end{remark}

Then we have $\tau(M,\eta)\leq \alpha(M,\eta)$, since the Thom polynomial is universally supported on the $\eta$-locus. The Thom polynomials of the simpler singularities are well-known so this is a well-calculable lower bound, although not very strong. A stronger bound can be given using the stable avoiding ideal:

\begin{definition}
	Let $M$ be a compact $n$-dimensional smooth manifold and $\eta$ a contact singularity. Then
	\[ \kappa(M,\eta):=\min \{l\geq l_0: \ga(\bar w(M))= 0 \text{ for all } \ga\in \mathcal{A}_{\eta(l)} \},\]
	where $l_0$ is the minimal relative codimension where $\eta$ can appear.
\end{definition}

By the definition of the avoiding ideal and using Proposition \ref{prop:avoiding_is_obstruction} and Remark \ref{flat0} we obtain the lower bound $\kappa(M,\eta)\leq \alpha(M,\eta)$. Since the Thom polynomial is in the avoiding ideal we have $\tau(M,\eta)\leq \kappa(M,\eta)$. The disadvantage of this stronger bound is that the avoiding ideals are only known in special cases. However we can define several elements of the avoiding ideal and obtain stronger bounds than the Thom polynomial bound. Several examples will be given in the rest of the paper.

\subsection{Lower bounds for the fold and Morin numbers}

In the case of $\eta=A_2$ and $\eta=\Sigma^2$ the Thom polynomials and the stable avoiding ideals are known, so calculating the lower bounds $\tau(M,\eta)$ and $\kappa(M,\eta)$ is possible. In fact it is enough to calculate one case (e.g the fold case) and we obtain the other one. Indeed, by Corollary \ref{cor:a2-si2} the avoiding ideals are the same (with a shift in $l$), and the same is true for the Thom polynomials:
\[\tp(A_2(l))=\tp(\Sigma^2(l-1))=s_{l+1,l+1}.\]
The case of $A_2$ for real projective spaces $\RP^n$ was calculated by Terpai in \cite{Terpai}. By the above remark this leads to the calculations for the $\eta=\Sigma^2$ case.

In this section we reprove parts of Terpai's results because we want to use the calculations later to find a geometric meaning for the bounds $\kappa(\RP^n,A_2)$ and $\kappa(\RP^n,\Sigma^2)$.

The answer depends on the binary expansion of $n$, and whether two consecutive 1's occur in the binary expansion. Suppose that $n$ has $t$ digits, i.e. $2^{t-1}\leq n<2^{t}$. Let $L(t)$ denote the largest $t$-digit number without two consecutive 1's, i.e $L(t)=101010\dots=\lfloor2^{t+1}/3\rfloor$. Then Terpai's results can be summarized as follows.

\begin{definition}\label{def:terpai}
	Suppose that $n$ has $t$ digits, then it falls  into exactly one of the following four classes:
	\begin{itemize}
		\item[a)] there are no consecutive 1's in the binary expansion of $n$ and $n$ is even,
		\item[b)] there are no consecutive 1's in the binary expansion of $n$ and $n$ is odd,
		\item[c)] there are consecutive 1's in the binary expansion of $n$ and $L(t)<n$,
		\item[d)] there are consecutive 1's in the binary expansion of $n$ and $n < L(t)$.
	\end{itemize}
	Let $p$ be the largest number such that $2^p$ divides $n$.
	
	In case d), let $u$ be the largest number such that the binary expansion of $n$ is of the following form (this always exists in case d)):
	\[n=\underbrace{[n_{t-1}\stb n_{u+3}]}_{a}[0,1,1]\underbrace{[n_{u-1}\stb n_0]}_{b}\]
	Therefore $n=2^u\cdot (8a+3)+b$, and the binary expansion of $a$ does not contain consecutive 1's.
	Define $\ka$ to be the following function:
	\begin{equation}\label{eq:terpai}
		\ka(n)=\begin{cases}
			\frac{n-2^p}{2}+1\qquad &\text{in case a)}\\
			\frac{n-1}{2}\qquad &\text{in case b)}\\		
			2^{t}-n-1\qquad&\text{in case c)}\\
			2^{u+2}a+(2^u-1-b)\qquad &\text{in case d)}		
		\end{cases}
	\end{equation}
\end{definition}
\begin{theorem}\cite{Terpai}\label{fold-kappa} For all $n\geq1$
	\[\kappa(\RP^n,A_2)=\ka(n).\]
\end{theorem}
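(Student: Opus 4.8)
The plan is to reduce the computation of $\kappa(\RP^n, A_2)$ to an explicit combinatorial question about the ideal $\A_{\clos{A_2}(l)} = \F_2\langle s_\lambda : (l+1)^2 \subseteq \lambda\rangle$ (Theorem \ref{thm:Terpai}) evaluated on the total dual Stiefel-Whitney class $\bar w(\RP^n)$. Recall that for $\RP^n$ one has $w(\RP^n) = (1+x)^{n+1}$ with $x \in H^1(\RP^n;\F_2)$ the generator, so $\bar w(\RP^n) = (1+x)^{-(n+1)} = (1+x)^{2^s - n - 1}$ for any $s$ with $2^s \geq n+1$; thus $w_i(\RP^n) = \binom{2^s-n-1}{i}x^i$ and $\bar w(\RP^n)$ is a finite, explicitly computable element of $\F_2[x]/(x^{n+1})$. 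The first step is therefore to record that $\ga(\bar w(\RP^n)) \in H^*(\RP^n;\F_2)$ is determined by a single mod-2 binomial coefficient once $\ga$ is expanded in the $w_i$'s, and that $\kappa(\RP^n, A_2)$ is the smallest $l \geq 0$ for which \emph{every} generator $s_\lambda$ with $(l+1)^2 \subseteq \lambda$ vanishes on $\bar w(\RP^n)$.

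The second step is to evaluate Schur polynomials on $\bar w(\RP^n)$. Since $H^*(\RP^n;\F_2)$ is generated in degree one, $s_\lambda(\bar w(\RP^n))$ is computed by the dual Jacobi-Trudi / Giambelli formula, and because we are substituting the Chern roots of a sum of $(2^s-n-1)$ copies of the line bundle $x$, one gets $s_\lambda(\bar w(\RP^n)) = (\dim V_\lambda') \cdot x^{|\lambda|}$ in a suitable sense — more precisely the coefficient is the number, mod $2$, of semistandard tableaux, which for a one-variable (rank-one up to multiplicity) specialization collapses to a product of binomial coefficients via the hook-content formula. Concretely, $s_\lambda\big((1+x)^{m}\big)$ equals $\prod_{(i,j)\in\lambda}\binom{m + j - i}{\text{hook}(i,j)} \bmod 2$ times $x^{|\lambda|}$, with $m = 2^s - n - 1$. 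So the condition ``$s_\lambda(\bar w(\RP^n)) = 0$ for all $\lambda \supseteq (l+1)^2$'' becomes: for every partition $\lambda$ containing the $(l+1)\times(l+1)$ square (but with $|\lambda| \leq n$, else the class is automatically zero for degree reasons), the mod-2 hook-content product vanishes. Using Lucas' theorem this is a purely arithmetic statement about the binary digits of $n$ (equivalently of $m$), and it is exactly here that the four cases a)--d) of Definition \ref{def:terpai} and the number $p = v_2(n)$ enter: one analyzes which square sizes $(l+1)^2$ admit a partition $\lambda$ for which no hook-content binomial coefficient is even.

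The third step is the case analysis itself. For each of the cases a)--d), one shows the matching lower and upper bounds: (i) that $\ka(n) \leq \kappa(\RP^n, A_2)$ by exhibiting a specific partition $\lambda^\ast \supseteq (\ka(n))^2$ — typically a hook or a near-square chosen so that its hook-content product is odd — witnessing that $\A_{\clos{A_2}(\ka(n)-1)}$ does \emph{not} annihilate $\bar w(\RP^n)$; and (ii) that $\kappa(\RP^n, A_2) \leq \ka(n)$ by showing that for $l = \ka(n)$ every $\lambda \supseteq (l+1)^2$ with $|\lambda| \leq n$ has an even hook-content coefficient, i.e.\ that the $(l+1)\times(l+1)$ square is already ``too big'' relative to the binary support of $n$. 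The degree constraint $|\lambda| \geq (l+1)^2$ forces $l+1 \leq \sqrt{n}$, and the finer obstruction is a carrying phenomenon in the binary addition hidden in the binomials $\binom{m+j-i}{\mathrm{hook}}$; the even/odd case split and the quantity $n - 2^p$ in case a) come from the position of the lowest set bit of $n$. Since Terpai \cite{Terpai} already proved this theorem, I would present the argument as a streamlined reproof organized around the Giambelli/hook-content evaluation and Lucas' theorem, citing \cite{Terpai} for the combinatorial identities where convenient.

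\textbf{Main obstacle.} The genuinely delicate part is step three, specifically the upper bound direction and case d): one must rule out \emph{every} partition $\lambda$ containing the square $(\ka(n)+1)^2$ of total size at most $n$, and the number of such partitions grows, so the argument cannot be by enumeration — it requires an a priori structural reason (a carry in the binary expansion of $m + (\text{content})$ forcing an even binomial for at least one cell of $\lambda$). Setting up the right invariant of the binary digits of $n$ that makes this transparent in all four cases simultaneously is the crux; the lower bound (exhibiting one good $\lambda$) is comparatively routine once the hook-content formula mod $2$ is in hand.
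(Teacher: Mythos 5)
There is a genuine gap here, and also a mismatch with how the paper treats this statement. First, note that the paper does not prove Theorem \ref{fold-kappa} at all: it is quoted from Terpai \cite{Terpai} (see Remark \ref{rmk:terpai} and the first line of the proof of Theorem \ref{k=t+1}, ``For the results on $\kappa(\RP^n,A_2)$, see \cite{Terpai}''), and the paper only reproves adjacent pieces — the Thom-polynomial bounds $\tau(\RP^n,A_2)$ and, in case a), the identification of exactly which $s_\la$ in the avoiding ideal are nonzero (Proposition \ref{prop:avoiding_obstructions}). Your proposal is therefore an attempted reproof of Terpai's theorem, which is legitimate in principle, and your first step (reduce to deciding, via Theorem \ref{thm:Terpai}, for which $l$ all $s_\la$ with $(l+1)^2\se\la$ vanish on $\bw(\RP^n)=(1+x)^m$, $m=2^s-n-1$) is exactly the right reduction and matches the machinery the paper sets up in Proposition \ref{prop:invSW}.

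However, the proposal does not constitute a proof. (1) Your key evaluation formula in step two is wrong as stated: $s_\la$ evaluated on the classes of $(1+x)^m$ equals (up to transposing $\la$, depending on the convention \eqref{eq:schurdef}) the number of semistandard tableaux with entries in $\{1,\dots,m\}$ times $x^{|\la|}$, and the hook-content formula gives this number as the quotient $\prod_{(i,j)}(m+j-i)\big/\prod_{(i,j)}h(i,j)$, not the product of binomials $\prod\binom{m+j-i}{h(i,j)}$ (already $\la=(2)$ gives a counterexample). Moreover, reducing such a quotient mod 2 cannot be done by applying Lucas factorwise; it requires comparing 2-adic valuations (a Kummer-type carry count), which is precisely the nontrivial arithmetic. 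The paper's own technique — evaluating the Jacobi–Trudi determinant as a 0--1 determinant using Proposition \ref{prop:invSW}/\ref{prop:binomial} — sidesteps this and is the cleaner route. (2) The actual content of the theorem, your step three, is not carried out: you must exhibit, for $l=\ka(n)-1$, a nonvanishing $s_\la$ with $(l+1)^2\se\la$ (in case a) with $p\ge 2$ this cannot be the square itself, since there the Thom polynomial already vanishes — this is exactly the delicate $+1$ of Theorem \ref{k=t+1}), and you must show that for $l=\ka(n)$ \emph{every} such $s_\la$ with $|\la|\le n$ vanishes, in all four cases of Definition \ref{def:terpai}. You explicitly flag this as unresolved, so as it stands the proposal is a plausible plan with a misstated central formula and the decisive combinatorial analysis missing; to complete it you would need either to reproduce Terpai's digit-by-digit argument or to extend the 0--1 determinant analysis of Proposition \ref{prop:avoiding_obstructions} from case a) to cases b)--d) and to both bounds.
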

\begin{corollary} \cite{Terpai}\label{fold-obstructions}
	If there exists a fold map $\mathbb{R} \P^n \rightarrow \mathbb{R}^{n+k}$, then $k\geq \kappa(n)$.
	\begin{remark}\label{rmk:terpai}
		Compared to Terpai's theorem \cite[Theorem 3]{Terpai}, there is a $+1$ correction term added in \eqref{eq:terpai}. \cite[Theorem 3]{Terpai} only claims Corollary \ref{fold-obstructions} but his proof shows that Theorem \ref{fold-kappa} is also true. We modified Terpai's cases to adjust them to our arguments.
	\end{remark}

\end{corollary}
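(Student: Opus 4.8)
The corollary is an immediate consequence of Theorem~\ref{fold-kappa} (Terpai), combined with the general inequality $\kappa(M,\eta)\leq\alpha(M,\eta)$ recorded just after the definition of $\kappa$. The first thing I would do is unwind the definitions: by construction a \emph{fold map} $\RP^n\to\R^{n+k}$ is precisely a smooth map whose $A_2$-locus is empty, so the existence of such a map for a given value of $k$ is equivalent to the assertion that the fold number $\alpha(\RP^n,A_2)$ is at most $k$.

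Next I would invoke the obstruction chain. Applying Proposition~\ref{prop:avoiding_bundle} to the jet section $j^k f$ (as in Proposition~\ref{prop:avoiding_is_obstruction}), every $\gamma\in\A_{A_2(l)}$ with $\gamma(\bar w(\RP^n))\neq 0$ forces the $A_2$-locus in relative codimension $l$ of any map homotopic to $f$ to be nonempty; together with Remark~\ref{flat0}, which propagates such an obstruction to all larger relative codimensions, this yields $\kappa(\RP^n,A_2)\leq\alpha(\RP^n,A_2)$. Feeding in Terpai's evaluation $\kappa(\RP^n,A_2)=\kappa(n)$ from Theorem~\ref{fold-kappa} then gives $\kappa(n)=\kappa(\RP^n,A_2)\leq\alpha(\RP^n,A_2)\leq k$, which is exactly the claim. (The accompanying remark about the $+1$ correction term in \eqref{eq:terpai} relative to \cite[Theorem 3]{Terpai} is a bookkeeping observation: Terpai's argument actually computes $\kappa(\RP^n,A_2)$ on the nose, not merely a lower bound for $\alpha$, and one only has to track the shift in indexing between his conventions and ours.)

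\textbf{Main obstacle.} For the corollary as stated there is essentially nothing to overcome; all of the substance lives in Theorem~\ref{fold-kappa}, which is assumed here. The genuinely hard part — which I would present as a separate argument — is that identity: one must determine, for each of the four binary patterns a)–d) of $n$, the largest relative codimension $l$ for which some element of $\A_{\clos{A_2}(l)}=\F_2\langle s_\lambda:(l+1)^2\subseteq\lambda\rangle$ (Theorem~\ref{thm:Terpai}) evaluates nontrivially on $\bar w(\RP^n)=1/w(\RP^n)=(1+a)^{n+1}$ with $a$ the generator of $H^1(\RP^n;\F_2)$, and then check that this maximum coincides with the closed form $\kappa(n)$ in \eqref{eq:terpai}. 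This amounts to a case analysis of binomial coefficients modulo $2$ via Lucas' theorem, and it is the combinatorial core of Terpai's proof; I would reprove enough of it (in Section~\ref{sec:obstructions_fold_morin}) to reuse the explicit Schur-class witnesses later for the geometric interpretation of $\kappa(\RP^n,A_2)$ and $\kappa(\RP^n,\Si^2)$.
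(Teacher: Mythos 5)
Your proposal is correct and follows the paper's own route: the corollary is read off from the chain $\kappa(n)=\kappa(\RP^n,A_2)\leq\alpha(\RP^n,A_2)\leq k$, where the middle inequality is exactly the one the paper records after the definition of $\kappa$ (via Proposition \ref{prop:avoiding_is_obstruction} and Remark \ref{flat0}) and the first equality is Theorem \ref{fold-kappa}, which both you and the paper take from Terpai. You also correctly locate all the substance in Theorem \ref{fold-kappa}; the paper likewise gives no independent argument for the corollary beyond this deduction.
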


\begin{table}
	\begin{center}
		\begin{tabular}{|c|c|c |c| c| c| c| c | c| c| c| c| c| c|c|c|c| c| c| c|c|c|}
			\hline
			$n$ &$\textcolor{red}{4_a}$ & $5_b$ & $6_c$ &$7_c$ &$\textcolor{red}{8_a}$ & $9_b$ & $10_a$ &  $11_c$ &$12_c$ &$13_c$ &$14_c$ &$15_c$ & $\textcolor{red}{16_a}$ & $17_b$ & $18_a$ & $19_d$	&$\textcolor{red}{20_a}$ & $21_b$ \\ \hline
			$\tau(\RP^n,A_2)$ & $\textcolor{red}{0}$ &2&1&0 &$\textcolor{red}{0}$ & 4& 5& 4&3&2&1& 0& $\textcolor{red}{0}$& 8& 9& 8& \textcolor{red}{8} &  10   \\ [1ex] \hline
			$\kappa(\RP^n,A_2)$ & $\textcolor{red}{1}$&
			$2$&1&0& $\textcolor{red}{1}$&$4$  & $5$ & $4$&3&2&1&0  &$\textcolor{red}{1}$&$8$  & $9$ & $8$ & $\textcolor{red}{9}$ & $10$\\[1ex] \hline
		\end{tabular}
	\end{center}
	\caption{The Thom polynomial and the avoiding ideal bounds for fold maps. The columns where the two bounds are different are colored in red. 
	}
	\label{table:Terpai}
\end{table}

Using Corollary \ref{cor:a2-si2} we immediately obtain 
\begin{theorem} For all $n\geq1$  
	\[\kappa(\RP^n,\Sigma^2)=\ka(n)-1.\]
\end{theorem}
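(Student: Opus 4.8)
The plan is to reduce the statement to Theorem \ref{fold-kappa} via Corollary \ref{cor:a2-si2}, which asserts the equality of stable avoiding ideals $\A_{\clos{A_2}(l)}=\A_{\clos{\Si^2}(l-1)}$ for $l\geq 0$. First I would unwind the definition of $\kappa$: by definition $\kappa(\RP^n,\Si^2)$ is the minimal $l\geq l_0$ (here $l_0=-1$, cf.\ Remark \ref{rmk:si2-1}) such that $\gamma(\bar w(\RP^n))=0$ for all $\gamma\in \A_{\clos{\Si^2}(l)}$, and likewise $\kappa(\RP^n,A_2)$ is the minimal $l\geq 0$ with $\gamma(\bar w(\RP^n))=0$ for all $\gamma\in \A_{\clos{A_2}(l)}$. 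The substitution $l\mapsto l-1$ in Corollary \ref{cor:a2-si2} identifies $\A_{\clos{\Si^2}(l)}$ with $\A_{\clos{A_2}(l+1)}$ for $l+1\geq 0$, i.e.\ for $l\geq -1$, which is exactly the range of $l_0$ for $\Si^2$. Hence the condition ``$\gamma(\bar w(\RP^n))=0$ for all $\gamma\in\A_{\clos{\Si^2}(l)}$'' holds if and only if the same condition holds for $\A_{\clos{A_2}(l+1)}$.

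Next I would observe that this equivalence of conditions, together with monotonicity in $l$ (if $l'\geq l$ then $\clos{A_2}(l')$ lies in a suitable unfolding of $\clos{A_2}(l)$, so by Remark \ref{flat0} / Theorem \ref{thm:lowering} with $\flat(0)$ the vanishing propagates upward — this is the same monotonicity already used implicitly in the definition of $\kappa$ via the phrase ``for all $l\geq$''), yields a bijection between the sets over which the two minima are taken, shifted by $1$. Concretely: $l\geq -1$ satisfies the $\Si^2$-vanishing condition $\iff$ $l+1\geq 0$ satisfies the $A_2$-vanishing condition. Taking minima of both sides gives $\kappa(\RP^n,\Si^2)+1=\kappa(\RP^n,A_2)$, i.e.\ $\kappa(\RP^n,\Si^2)=\ka(n)-1$ by Theorem \ref{fold-kappa}. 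The only subtlety is to make sure the lower endpoints match up: the minimum for $A_2$ ranges over $l\geq 0$, and under the shift this corresponds precisely to $l\geq -1$ for $\Si^2$, which is the declared $l_0$ for $\Si^2$; so no boundary case is lost or gained.

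I do not expect a genuine obstacle here — the statement is essentially a corollary, and the ``hard part'' is purely bookkeeping: correctly tracking the index shift $l\leftrightarrow l-1$ between the two normalizations and confirming that the range $l\geq l_0=-1$ on the $\Si^2$ side is exactly the image of $l\geq 0$ on the $A_2$ side under that shift. One should also note in passing that the value $\ka(n)-1$ is automatically $\geq -1$ for all $n\geq 1$ (indeed $\ka(n)\geq 0$ always, so this is immediate), so the answer is always an admissible relative codimension for $\Si^2$. Everything substantive has already been done in proving Corollary \ref{cor:a2-si2} (from Theorems \ref{thm:mod2avoiding-sigma-i} and \ref{thm:Terpai}) and Theorem \ref{fold-kappa}.
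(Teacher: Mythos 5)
Your proposal is correct and follows the paper's own route: the paper likewise deduces the theorem immediately from Corollary \ref{cor:a2-si2} together with Terpai's Theorem \ref{fold-kappa}, with the endpoint $l=-1$ admissible for $\Si^2$ by Remark \ref{rmk:si2-1}. Your extra bookkeeping (the shift $l\leftrightarrow l+1$ of the index ranges and the monotonicity of the vanishing condition in $l$) is exactly what the paper leaves implicit.
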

Notice that $l=-1$ is allowed for $\Sigma^2$.
\begin{corollary}\label{thm:morin-obstructions}
	If there exists a Morin map $\mathbb{R} \P^n \rightarrow \mathbb{R}^{n+k}$, then $k\geq \kappa(n)-1$.
\end{corollary}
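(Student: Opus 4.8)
The plan is to derive this corollary directly from the two preceding results: the theorem identifying $\kappa(\RP^n,\Sigma^2)=\ka(n)-1$, and the general lower bound $\kappa(M,\eta)\leq\alpha(M,\eta)$ established via Proposition \ref{prop:avoiding_is_obstruction} and Remark \ref{flat0}. First I would recall the definitions: a Morin map $\RP^n\to\R^{n+k}$ is precisely a map with no $\Sigma^2$-points, and $\alpha(\RP^n,\Sigma^2)$ is by definition the smallest such $k$. So the existence of a Morin map $\RP^n\to\R^{n+k}$ forces $k\geq\alpha(\RP^n,\Sigma^2)$. Combining with $\kappa(\RP^n,\Sigma^2)\leq\alpha(\RP^n,\Sigma^2)$ and the computed value $\kappa(\RP^n,\Sigma^2)=\ka(n)-1$, we get $k\geq\ka(n)-1$.

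In more detail, the chain of reasoning is as follows. By Proposition \ref{prop:avoiding_is_obstruction}, if $\al\in\A_{\Sigma^2(l)}$ is any element of the stable avoiding ideal and $\al(\bar w(\RP^n))\neq 0$, then any smooth map $f:\RP^n\to\R^{n+l}$ has nonempty $\Sigma^2$-locus; here we use that the virtual normal bundle of a map to Euclidean space is $f^*T\R^{n+l}\ominus T\RP^n$, whose Stiefel-Whitney class is exactly $\bar w(\RP^n)=1/w(\RP^n)$, cf.\ \eqref{eq:wb}. Thus if $k<\ka(n)-1=\kappa(\RP^n,\Sigma^2)$, then by the definition of $\kappa$ there exists $\ga\in\A_{\Sigma^2(k)}$ with $\ga(\bar w(\RP^n))\neq 0$ (note $k\geq l_0=-1$ is exactly the range allowed for $\Sigma^2$, cf.\ Remark \ref{rmk:si2-1}), which obstructs the existence of any Morin map $\RP^n\to\R^{n+k}$. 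Equivalently, if a Morin map $\RP^n\to\R^{n+k}$ exists, then $k\geq\kappa(\RP^n,\Sigma^2)=\ka(n)-1$.

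There is essentially no hard part here: this is a formal consequence of the previously established machinery, entirely parallel to how Corollary \ref{fold-obstructions} follows from Theorem \ref{fold-kappa} in the fold case. The one point worth flagging is the bookkeeping of the shift in relative codimension: a Morin map into $\R^{n+k}$ corresponds to the singularity type $\Sigma^2(k)$ (relative codimension $k$), and since $\A_{\clos{A_2}(l)}=\A_{\clos{\Si^2}(l-1)}$ by Corollary \ref{cor:a2-si2}, the Morin bound is obtained from the fold bound by replacing $l$ by $l+1$, i.e.\ subtracting $1$ from $\ka$. One should also confirm that the $l=-1$ endpoint is genuinely permissible for $\Si^2$, which is guaranteed by Remark \ref{rmk:si2-1} and the validity of Theorem \ref{thm:mod2avoiding-sigma-i} for $\Si^2(-1)$; with that in hand the statement follows immediately.
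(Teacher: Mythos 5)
Your argument is correct and is essentially the paper's own: the corollary is the immediate combination of the computation $\kappa(\RP^n,\Sigma^2)=\kappa(n)-1$ (obtained from Terpai's fold result via Corollary \ref{cor:a2-si2}) with the general bound $\kappa(M,\eta)\leq\alpha(M,\eta)$ coming from Proposition \ref{prop:avoiding_is_obstruction}, exactly as Corollary \ref{fold-obstructions} follows from Theorem \ref{fold-kappa}. Your extra care about the codimension shift and the admissibility of $l=-1$ (Remark \ref{rmk:si2-1}) matches the paper's bookkeeping, so nothing is missing.
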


These bounds are slightly better then the Thom polynomial bounds, see Table \ref{table:Terpai} for the first couple of values. In general we have the following Theorem.
\begin{theorem}\label{k=t+1} For  the case a), $p\geq2$ we have
	\[\tau(\RP^n,A_2)= \kappa(\RP^n,A_2)-1=\kappa(n)-1,\]
	and for all the other cases we have
	\[ \tau(\RP^n,A_2)=\kappa(\RP^n,A_2)=\kappa(n).\]
	Similarly for the Morin case in  case a), $p\geq2$  we have
	\[\tau(\RP^n,\Sigma^2)+1 =\kappa(\RP^n,\Sigma^2)=\kappa(n)-1,\]
	and for all the other cases we have
	\[ \tau(\RP^n,\Sigma^2)=\kappa(\RP^n,\Sigma^2)=\kappa(n)-1.\]
\end{theorem}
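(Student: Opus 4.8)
The plan is to reduce everything to a single computation about the Thom polynomial bound $\tau$ versus the avoiding ideal bound $\kappa$ for $A_2$, since by Corollary \ref{cor:a2-si2} and the shift $\tp(A_2(l))=\tp(\Sigma^2(l-1))=s_{l+1,l+1}$ the Morin statements follow from the fold statements by substituting $l\mapsto l-1$ throughout (the $\Sigma^2$ case simply inherits the relation $\tau(\RP^n,\Sigma^2)=\tau(\RP^n,A_2)-1$ and $\kappa(\RP^n,\Sigma^2)=\kappa(\RP^n,A_2)-1$, and Theorem \ref{fold-kappa} already gives $\kappa(\RP^n,A_2)=\ka(n)$). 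So it suffices to prove the two displayed equalities for $\tau(\RP^n,A_2)$. Since $\tp_{\eta}\in\A_{\eta}$ always forces $\tau\leq\kappa$, the content is: (i) in case a) with $p\geq 2$ the Thom polynomial bound is exactly one less than $\ka(n)$; (ii) in all other cases it equals $\ka(n)$.

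First I would make the Thom polynomial bound explicit. By definition $\tau(\RP^n,A_2)=\min\{l\geq 0: s_{j+1,j+1}(\bw(\RP^n))=0\text{ for all }j\geq l\}$, so I need to understand for which $l$ the Schur polynomial $s_{l+1,l+1}$ evaluated on the total inverse Stiefel-Whitney class $\bw(\RP^n)=(1+x)^{-(n+1)}=(1+x)^{2^t-n-1}$ (working mod $x^{n+1}$ in $H^*(\RP^n;\F_2)=\F_2[x]/(x^{n+1})$, where $t$ is chosen with $2^t\geq n+1$) is nonzero, i.e.\ has nonzero coefficient of $x^n$. Using $s_{m,m}=w_m^2+\sum(\ldots)$ — concretely $s_{m,m}(\bw)=\bw_m^2-\bw_{m+1}\bw_{m-1}$, and mod 2 the binomial coefficients governing $\bw_j=\binom{2^t-n-1}{j}$ can be read off via Lucas' theorem — this becomes a purely combinatorial question about binary digits of $n$. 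The key identity I expect to use is that $s_{m,m}$ of a line-bundle-type class of total degree $2n$ picks out $\binom{n+1+m-1}{?}$-style coefficients; tracking these mod 2 via Lucas reproduces exactly Terpai's case division a)--d). This is the same computation Terpai performs, so I would cite \cite{Terpai} for the bulk of it and only carry out the comparison carefully.

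Next comes the crux: comparing $\tau$ and $\kappa=\ka(n)$. By Theorem \ref{fold-kappa} (Terpai), $\kappa(\RP^n,A_2)=\ka(n)$, and $\A_{\clos{A_2}(l)}=\F_2\langle s_\lambda:(l+1)^2\subseteq\lambda\rangle$ by Theorem \ref{thm:Terpai}, so $\ka(n)=\min\{l\geq 0: s_\lambda(\bw(\RP^n))=0\text{ for all }\lambda\supseteq(l+1)^2\}$. Thus $\tau<\kappa$ precisely when, at $l=\ka(n)-1$, the minimal generator $s_{l+1,l+1}$ vanishes on $\bw(\RP^n)$ but some larger $s_\lambda$ with $(l+1)^2\subseteq\lambda$ does not. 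The heart of the argument is therefore to locate, in case a) with $p\geq 2$, the specific partition $\lambda$ (obtained from $(l+1)^2$ by enlarging, presumably to something like $(l+1+a,\,l+1)$ or $(l+2,l+2)$ reflecting the extra factor of $2$ coming from $2^p\mid n$) for which $s_\lambda(\bw(\RP^n))\neq 0$ even though the Thom polynomial has died; and conversely to show that in cases b), c), d) no such enlargement can resurrect a nonzero class beyond where the Thom polynomial already vanishes, i.e.\ $\tau$ and $\kappa$ coincide. I expect this dichotomy to be the main obstacle: it requires a clean lemma of the form ``$s_\lambda(\bw(\RP^n))\neq 0$ for some $\lambda\supseteq(l+1)^2$ iff [explicit binary-digit condition on $n$ depending on $l$ and $p$]'', which one proves by expanding $s_\lambda$ via the Jacobi–Trudi determinant, reducing mod 2, and invoking Lucas to read off when the top coefficient survives. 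Once this lemma is in place the four cases fall out by bookkeeping: in case a) the extra $2$-divisibility ($p\geq 2$) gives exactly one extra unit of room, producing the $+1$ discrepancy, while in the other cases the Thom polynomial is already the last surviving generator. I would organise the write-up as: (1) reduce Morin to fold; (2) recall $\tau\leq\kappa$ and the explicit forms of both; (3) state and prove the binary-digit lemma for $s_\lambda(\bw(\RP^n))$; (4) apply it case by case, flagging that cases b)–d) are immediate from $\tau\leq\kappa\leq\ka(n)$ together with Terpai's computation that $\tp_{\eta(\ka(n)-1)}(\bw(\RP^n))\neq 0$ in those cases, and that only case a) with $p\geq 2$ needs the enlargement $\lambda\supsetneq(l+1)^2$.
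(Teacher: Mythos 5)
Your outline coincides with the paper's proof in its scaffolding: cite Terpai (Theorem \ref{fold-kappa}) for $\kappa(\RP^n,A_2)=\ka(n)$, transfer everything to the Morin case via Corollary \ref{cor:a2-si2} and the shift $\tp_{A_2(l)}=\tp_{\Sigma^2(l-1)}$, use $\tau\leq\kappa$, and reduce the rest to mod~2 evaluations of Schur polynomials on $\bw(\RP^n)=(1+x)^{2^{t}-n-1}$ controlled by Lucas's theorem. But beyond that scaffolding the proposal has genuine gaps: the decisive computations are only gestured at, and the step you single out as the ``heart of the argument'' is not the one the theorem needs. Locating a partition $\la\supsetneq(l+1)^2$ with $s_\la(\bw(\RP^n))\neq0$ in case a) is precisely the lower-bound half of the result you have already declared you will cite ($\kappa\geq\ka(n)$, Theorem \ref{fold-kappa}); proving it again contributes nothing to the comparison of $\tau$ with $\kappa$. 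What the theorem actually requires are evaluations of the rectangular Schur polynomials themselves: in case a) with $p\geq2$ you must show both that $s_{\ka,\ka}(\bw)=0$ (which, combined with the vanishing of the whole ideal $\A_{A_2(\ka)}$ from Terpai, gives $\tau\leq\ka(n)-1$) \emph{and} that $s_{\ka-1,\ka-1}(\bw)\neq0$, since the theorem asserts the equality $\tau(\RP^n,A_2)=\ka(n)-1$, not an inequality. This non-vanishing one level lower never appears anywhere in your plan, and the ``binary-digit lemma'' you propose (existence of \emph{some} nonzero $s_\la$ containing the rectangle) cannot deliver it, because that lemma is about $\kappa$, not about the rectangles governing $\tau$.

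Concretely, the paper's proof consists of exactly the computations your plan defers or outsources. Proposition \ref{prop:invSW} gives $\bw_k(\RP^n)\ne 0$ iff the binary digits of $k$ and $n$ are disjoint; in cases b), c), d) one checks digit-disjointness to get $\bw_{\ka}=x^{\ka}$ and $\bw_{\ka+1}=0$, hence $s_{\ka,\ka}(\bw)=x^{2\ka}\neq0$ and $\tau\geq\ka(n)$ -- you attribute this to ``Terpai's computation,'' but the whole point of the section is to reprove it, and your case bookkeeping also omits case a) with $p=1$, where the same non-vanishing ($s_{n/2,n/2}(\bw)=x^{n}$) is needed. In case a) with $p\geq2$ the mechanism is Corollary \ref{cor:interval}: $\bw_i=1$ for the $2^p$ consecutive values $i=n/2-2^{p-1},\dots,n/2+2^{p-1}-1$ and $\bw_{n/2+2^{p-1}}=0$, which forces the pairwise cancellation $\bw_{j+1}^2=\bw_j\bw_{j+2}$ (or a degree overflow) for every $j\geq\ka-1$, while $\tp_{A_2(\ka-2)}(\bw)=\bw_{\ka-1}^2+\bw_{\ka-2}\bw_{\ka}=x^{n-2^p}\neq0$; this is also exactly where the dichotomy $p\geq2$ versus $p=1$ enters. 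None of this is carried out in the proposal (the relevant binomial coefficient is left as a placeholder), so as it stands the argument establishes neither the vanishing threshold nor the exact value of $\tau$ in any of the cases.
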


In Section \ref{sec:eulerchar} we will give a geometric meaning to the extra obstruction in case a) of the Morin case, measuring the Euler characteristics of the $\Sigma^2$-locus.

\begin{remark}The obvious upper bound for the fold number by dimension considerations is $\lceil\frac{n-1}{2}\rceil$, which coincides with the lower bounds for case b), i.e.\ it gives a sharp result. In the other cases there is a gap.
\end{remark}
\subsection{Proof of Theorem \ref{k=t+1}} For the results on $\kappa(\RP^n,A_2)$, see \cite{Terpai}. The analogous results for $\Sigma^2$ are implied by Corollary \ref{cor:a2-si2} on the equality of the two avoiding ideals.

Before proving the Thom polynomial bounds $\tau(\RP^n,A_2)$, we compute some inverse Stiefel-Whitney classes of $\RP^n$. These computations rely on the following general form of the inverse Stiefel-Whitney class of $\RP^n$.

\begin{proposition}\label{prop:invSW}
	Let $n_r\stb n_0$ be the binary representation of a number $n$. Then the inverse of the total Stiefel-Whitney class of $\RP^n$, $\bw(\RP^n):=1/w(\RP^n)$ is
	\begin{equation}\label{eq:inversesw}
		\bw(\RP^n)=(1+x)^m=\sum_{k:k_j\cdot n_j=0,\forall j}x^k.
	\end{equation}
	in $H^*(\RP^n;\F_2)$, where $m=2^{r+1}-1-n$.
\end{proposition}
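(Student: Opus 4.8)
The plan is to use the standard description of $H^*(\RP^n;\F_2)=\F_2[x]/(x^{n+1})$ together with the fact that $T\RP^n\oplus\underline{\R}\cong (n+1)\gamma$, where $\gamma$ is the tautological line bundle, so that $w(\RP^n)=(1+x)^{n+1}$ in $H^*(\RP^n;\F_2)$. Hence $\bw(\RP^n)=1/w(\RP^n)=(1+x)^{-(n+1)}$, and the first task is to identify a genuine polynomial $(1+x)^m$ with $(1+x)^{-(n+1)}$ in the truncated ring. The key number-theoretic input is: if $n$ has binary digits $n_r\ldots n_0$ (so $2^r\le n<2^{r+1}$) and $m=2^{r+1}-1-n$, then $n+m=2^{r+1}-1$, whence $(1+x)^{n+1}(1+x)^m=(1+x)^{2^{r+1}}=1+x^{2^{r+1}}$ over $\F_2$ by the Frobenius/freshman's-dream identity. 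Since $x^{n+1}=0$ and $2^{r+1}\ge n+1$, the term $x^{2^{r+1}}$ vanishes in $H^*(\RP^n;\F_2)$, so $(1+x)^m\cdot w(\RP^n)=1$, which proves the first equality $\bw(\RP^n)=(1+x)^m$.

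The second task is to expand $(1+x)^m$ and match it against $\sum_{k:\,k_j n_j=0\ \forall j}x^k$. First I would compute the binary digits of $m=2^{r+1}-1-n$: subtracting $n$ from $2^{r+1}-1=\underbrace{11\cdots1}_{r+1}$ requires no borrows, so the $j$-th digit of $m$ is $m_j=1-n_j$ for $0\le j\le r$; i.e.\ $m$ is the "bitwise complement" of $n$ within $r+1$ bits. By Lucas' theorem, the binomial coefficient $\binom{m}{k}$ is odd if and only if every binary digit of $k$ is $\le$ the corresponding digit of $m$, i.e.\ $k_j\le m_j=1-n_j$ for all $j$; for digits $j$ with $n_j=1$ this forces $k_j=0$, and for digits $j$ with $n_j=0$ it imposes no constraint (beyond $k_j\in\{0,1\}$, and automatically $k\le m<2^{r+1}$). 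Thus $\binom{m}{k}\equiv1\pmod2$ exactly when $k_j n_j=0$ for all $j$, which is precisely the index set in \eqref{eq:inversesw}. Since all such $k$ satisfy $k\le m\le 2^{r+1}-1-n < n+1$ would need checking — in fact we need $k\le m$ and $m$ can exceed $n$, but the surviving monomials $x^k$ with $k\ge n+1$ vanish anyway in the truncated ring, so the stated equality holds in $H^*(\RP^n;\F_2)$.

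I do not expect a serious obstacle here; this is a bookkeeping argument. The only mildly delicate point is being careful about whether the sum $\sum x^k$ is taken in the polynomial ring or in the truncated ring $H^*(\RP^n;\F_2)$: monomials $x^k$ with $k>n$ are identically zero in cohomology, so one must either restrict the index set to $k\le n$ (equivalently $k\le m$, since the Lucas condition already forces $k\le m$) or simply interpret the identity modulo $x^{n+1}$. I would state the identity as living in $H^*(\RP^n;\F_2)$ so that this truncation is automatic, and remark that the "$(1+x)^m$" form is the convenient closed expression from which the explicit monomial list is read off via Lucas' theorem.
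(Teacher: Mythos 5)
Your proposal is correct and follows essentially the same route as the paper: the identity $(1+x)^{n+1}(1+x)^m=(1+x)^{2^{r+1}}=1+x^{2^{r+1}}=1$ in $H^*(\RP^n;\F_2)$ for the first equality, and Lucas' theorem applied to the bitwise complement $m_j=1-n_j$ for the second (this is exactly the paper's Proposition \ref{prop:binomial}). One tiny inaccuracy: $m=2^{r+1}-1-n\le 2^r-1<n$ can never exceed $n$, so the truncation caveat you hedge on is vacuous, though your argument is unaffected either way.
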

\begin{proof}
	First, we prove the left hand side: $w(\RP^n)=(1+x)^{n+1}$, so the product equals
	\[(1+x)^{n+m+1}=(1+x)^{2^{r+1}}=1+x^{2^{r+1}}=1\] in $H^*(\RP^n;\F_2)$, which proves the first equality. The second one follows from the next proposition. Notice that $m$ is the \quot{complement} of $n$: in the binary representation of $n$ we interchange the zeros and $1$'s.
\end{proof}
\begin{proposition}\label{prop:binomial}
	Given a nonnegative integer $n$, let $n_r\stb n_0$ denote its base 2 representation. Let  $m=2^{r+1}-n-1$. Then for all $k$:
	$$\binom{m}{k}\equiv \begin{cases}
		1\mod 2,\quad \text{if }k_j\cdot n_j=0,\forall j\\
		0 \mod 2,\quad\text{else.}
	\end{cases}$$
\end{proposition}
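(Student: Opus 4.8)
The plan is to reduce the claim to Lucas's theorem on binomial coefficients modulo a prime. Write $n = \sum_{j=0}^{r} n_j 2^j$ with $n_r = 1$ (so $2^r \le n < 2^{r+1}$), and set $m = 2^{r+1} - 1 - n = \sum_{j=0}^{r} (1-n_j) 2^j$. The key observation, already noted in the proof of Proposition \ref{prop:invSW}, is that the base-$2$ digits of $m$ are exactly the \emph{complementary} digits: $m_j = 1 - n_j$ for $0 \le j \le r$, and $m_j = 0$ for $j > r$. This is immediate since $\sum_{j=0}^r 2^j = 2^{r+1}-1$, so subtracting $n$ digit by digit produces no borrows.

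Next I would invoke Lucas's theorem: for a prime $p$ (here $p=2$) and nonnegative integers $m = \sum m_j p^j$, $k = \sum k_j p^j$ written in base $p$, one has $\binom{m}{k} \equiv \prod_j \binom{m_j}{k_j} \pmod p$. Over $\F_2$ each factor $\binom{m_j}{k_j}$ is $1$ if $k_j \le m_j$ and $0$ otherwise; equivalently, $\binom{m}{k}$ is odd precisely when $k_j \le m_j$ for every $j$, i.e.\ when the binary digits of $k$ are ``contained'' in those of $m$. (If $\binom{m}{k}$ is to be nonzero at all we of course need $k \le m$, but the digitwise condition is the sharp one.)

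It remains to translate the digitwise containment $k_j \le m_j$ for all $j$ into the condition $k_j \cdot n_j = 0$ for all $j$ stated in the proposition. For $j \le r$: since $m_j = 1 - n_j \in \{0,1\}$, the inequality $k_j \le 1 - n_j$ is equivalent to $k_j = 0$ whenever $n_j = 1$, i.e.\ to $k_j n_j = 0$. For $j > r$: we have $m_j = 0$, so $k_j \le m_j$ forces $k_j = 0$; but also $n_j = 0$ for $j > r$, so the condition $k_j n_j = 0$ is automatically satisfied, and both conditions are equivalent to $k_j = 0$. Hence the two conditions agree in all digits, and $\binom{m}{k}$ is odd if and only if $k_j n_j = 0$ for all $j$, which is exactly the claim. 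I do not anticipate a genuine obstacle here; the only point requiring a line of care is the no-borrow observation identifying the digits of $m$, and the clean handling of the high digits $j > r$, where one should note that $k_j \ne 0$ already forces $\binom{m}{k}$ even for size reasons and is consistent with $n_j = 0$.
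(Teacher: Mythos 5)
Your proposal is correct and takes essentially the same route as the paper: Lucas's theorem combined with the observation that the binary digits of $m=2^{r+1}-1-n$ are the complements $m_j=1-n_j$ of the digits of $n$, so that digitwise containment $k_j\le m_j$ is exactly the condition $k_jn_j=0$. The only point to adjust is your treatment of digits $j>r$: there the condition $k_jn_j=0$ is vacuously true (since $n_j=0$), hence \emph{not} equivalent to $k_j=0$; the statement is implicitly restricted to $k<2^{r+1}$ (as in its application, where $k\le n$), which is also what the paper's proof tacitly assumes by taking the Lucas product only over $j=0,\dots,r$.
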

\begin{proof}
	Lucas's theorem states that
	\begin{equation}\label{eqn:lucas}
		\binom{m}{k}\equiv \prod_{j=0}^r \binom{m_j}{k_j}\mod 2.
	\end{equation}
	Therefore $\binom{m}{k}=1$ if and only if $m_j=0$ implies $k_j=0$. Since $m_j+n_j=1$ for all $j$, this implies the statement.
\end{proof}

\begin{example}
	Take $n=437$, whose binary expansion is $110110101$. Then Proposition \ref{prop:invSW} states that $1/w(\PP^n)$ is the sum of all $x^j$, such that the binary expansion of $j$ is of the form $00*00*0*0$, where $*\in \F_2$ are arbitrary:
	\[ 1/w(\PP^{437})=1+x^2+x^8+x^{10}+x^{64}+x^{66}+x^{72}+x^{74}. \]
\end{example}

\begin{corollary}\label{cor:interval}Let $n$ be such that there are no consecutive 1's in the binary expansion of $n$ and $n$ is even, and let $p$ be the largest number such that $2^p$ divides $n$. Then for $\al=n/2-2^{p-1}$ we have $\bar w_i(\R\P^n)=1$ for $i=\al,\dots,\al+2^p-1$, and $\bar w_{\al+2^p}(\R\P^n)=0$.
	
\end{corollary}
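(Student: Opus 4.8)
The plan is to read off the individual coefficients $\bar w_i(\RP^n)$ from Propositions~\ref{prop:invSW} and~\ref{prop:binomial} and then reduce the statement to a short count about binary digits. By Proposition~\ref{prop:invSW}, in $H^*(\RP^n;\F_2)$ one has $\bar w(\RP^n)=\sum x^i$, the sum running over those $i$ with $0\le i\le n$ whose binary expansion carries no $1$ in a position where $n$ carries a $1$; equivalently, for $0\le i\le n$, $\bar w_i(\RP^n)=1$ exactly when $i$ and $n$ share no common binary $1$, and $\bar w_i(\RP^n)=0$ otherwise (while $\bar w_i=0$ trivially for $i>n$). So it suffices to analyse the binary expansions of the integers in $[\alpha,\alpha+2^p]$.

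First I would turn the hypotheses into a concrete binary picture. Since $n$ is even, $p\ge 1$; write $n=2^p q$ with $q$ odd. As $q$ is a contiguous block of the binary expansion of $n$ it has no consecutive $1$'s, and being odd its bit in position $1$ vanishes, so $q\equiv 1\pmod 4$; write $q=4q'+1$. Then
\[
n=2^{p+2}q'+2^p,\qquad
\alpha=\tfrac n2-2^{p-1}=2^{p-1}(q-1)=2^{p+1}q',
\]
and $q'$, again a contiguous block of $n$, has no consecutive $1$'s. In words: $n$ has an isolated $1$ in position $p$, zeros in positions $0,\dots,p-1$ and in position $p+1$, and its bit in position $p+2+\ell$ equals the $\ell$-th bit of $q'$; while $\alpha$ has zeros in positions $0,\dots,p$ and its bit in position $p+1+\ell$ equals the $\ell$-th bit of $q'$.

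Now take $i$ with $\alpha\le i\le\alpha+2^p-1$ and write $i=\alpha+j$, $0\le j\le 2^p-1$. Since $\alpha$ is a multiple of $2^{p+1}$ and $j<2^p$, the sum $\alpha+j$ involves no carries, so $i$ agrees with $j$ in positions $0,\dots,p-1$, has a $0$ in position $p$, and agrees with $\alpha$ in positions $\ge p+1$. Comparing with $n$: in positions $0,\dots,p-1$ and $p+1$ the number $n$ has a $0$; in position $p$ the number $i$ has a $0$; and in position $p+2+\ell$ the bit of $i$ is the $(\ell+1)$-st bit of $q'$ whereas the bit of $n$ is the $\ell$-th bit of $q'$, so these cannot both be $1$ because $q'$ has no consecutive $1$'s. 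Hence $i$ and $n$ share no common $1$; as $\alpha\le i\le n$, this yields $\bar w_i(\RP^n)=1$. (When $q'=0$ this is the degenerate case $n=2^p$, $\alpha=0$, which is immediate.) Finally, for $i=\alpha+2^p=2^{p+1}q'+2^p$ the summand $2^p$ places a $1$ in position $p$, and $n$ also has a $1$ there, so $i$ and $n$ share the $1$ in position $p$ (and $i\le n$); therefore $\bar w_{\alpha+2^p}(\RP^n)=0$. The only delicate point is the bit bookkeeping after the substitution $q=4q'+1$ — precisely the step that exhibits the interval $[\alpha,\alpha+2^p-1]$ and the first failure at $\alpha+2^p$ — and I expect no genuine obstacle beyond that.
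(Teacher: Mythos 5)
Your argument is correct and follows essentially the same route as the paper: both reduce the claim via Proposition \ref{prop:invSW} to checking that $\alpha+j$ ($0\le j\le 2^p-1$) shares no binary $1$ with $n$ while $\alpha+2^p$ does, using that $\alpha$ is the right-shift of $n$ with its lowest $1$ removed and that $n$ has no consecutive $1$'s. Your explicit bookkeeping via $n=2^{p+2}q'+2^p$, $\alpha=2^{p+1}q'$ is just a more formal rendering of the paper's digit argument.
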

\begin{proof}The binary expansion of $\al$ is obtained by shifting the binary expansion of $n$ to the right, and changing the last 1 into 0. Since $n$ does not contain two consecutive 1's, there are no 1's at the same location in  the binary expansion of $n$ and $\al$. The last $p$ digits of both $n$ and $\al$ are zero, therefore there are no 1's at the same location in  the binary expansion of $n$ and $\al+i$ for $i=1,\dots,2^p-1$ either, implying the first part of the statement. The $p$'th digit of $n$ and $\al+2^p$  are both 1, implying that $\bar w_{\al+2^p}(\R\P^n)=0$.
\end{proof}

Now we establish the Thom polynomial bounds $\tau(\RP^n,A_2)$ stated in Theorem \ref{k=t+1}. In this proof we will write $\bw_i=\bw_i(\RP^n)$. Corollary \ref{cor:interval} implies that for $p\geq2$ the Thom polynomial $\tp_{\Si^2(l-1)}=w_{l+1}^2+w_{l+2}w_{l}$ for $l=n/2-2^{p-1}-1$ does not vanish and that $\tp_{\Si^2(j)}(f)=0$ for $j\geq l$ when evaluated on the inverse Stiefel-Whitney class of $\RP^n$. Notice the different behaviour for $p=1$: then $\al=n/2-1$ and $\al+2^p-1=n/2$, implying that
\[\bar{w}^2_{n/2}+\bar{w}_{n/2-1}\bar{w}_{n/2+1}=x^n.\]
This proves Theorem \ref{k=t+1} for case a).


Assume that $n$ is of type b). Since the binary expansion of $n$ does not contain consecutive ones, there is no overlap between $(n-1)/2$ and $n$, so $\bw_{l+2}=x^{l+2}$ by Proposition \ref{prop:invSW}. Since $l+3$ and $n$ are both odd, $\bw_{l+3}=0$.

Assume that $n$ is of type c). The binary expansions of $n$ and $l+2=(2^{s+1}-1)-n$ are disjoint, so $\bw_{l+2}=x^{l+2}$ by Proposition \ref{prop:invSW} and $\bw_{l+3}=0$.

Assume that $n$ is of type d). Write $n=2^u\cdot (8a+3)+b$, i.e.\ the binary expansion:
\[n=\underbrace{[n_s\stb n_{u+3}]}_{a}[0,1,1]\underbrace{[n_{u-1}\stb n_0]}_{b}\]
where $a$ contains no consecutive 1's. We will show $\bw_{l+2}=x^{l+2}$ using Proposition \ref{prop:invSW} by showing that there are no carries when adding  $n$ and $l+2=2^{u+2}a+2^u-b-1$. There are clearly no carries when adding $2^u-b-1$ and $b$. Since there are no consecutive ones in $a$, there are also no carries in the sum $2^{u+2}\cdot a$ and $2^{u+3}\cdot a$, so $\bw_{l+2}=x^{l+2}$.  Since there is a carry in the sum of $l+3$ and $n$ (since there is a carry in the sum of $b$ and $2^u-b$), $\bw_{l+3}=0$. This finishes the proof of Theorem \ref{k=t+1}. \qed

\subsection{All obstructions in the avoiding ideal}
Corollary \ref{cor:interval} can be used to explicitely calculate all the non-vanishing obstructions. We will use this result in Section \ref{sec:Sigmai} to give a geometric interpretation of these obstructions using the Parusi\'nski-Pragacz formula.

\begin{proposition} \label{prop:avoiding_obstructions} Assume that $n$ belongs to case a): there are no consecutive 1's in the binary expansion of $n$ and $n$ is even.
	
	Let  $l=n/2-2^{p-1}-1$ and $s_\lambda \in \A_{\Si^2(l)}$. Then for $\bw=1/w(\P^n)$, $s_\lambda(\bw)\neq0$ if and only if
	\begin{equation}\label{eq:lambda}
		\la=(l+2+p, l+2,1^{q-p})
	\end{equation}
	for $q=n-2(l+2)$ and $p=0,\dots,q$. In these cases $s_\lambda(\bw)=x^n$.
\end{proposition}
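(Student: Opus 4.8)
The plan is to reduce the question to an elementary determinant computation over $\F_2$. Because $H^*(\RP^n;\F_2)=\F_2[x]/(x^{n+1})$, Proposition \ref{prop:invSW} allows us to write $\bw_k:=\bw_k(\RP^n)=\epsilon_k x^k$ with $\epsilon_k\in\{0,1\}$, where $\epsilon_0=1$, $\epsilon_k=0$ for $k<0$, and for $k>0$ one has $\epsilon_k=1$ precisely when the binary expansions of $k$ and $n$ have disjoint supports. Writing $r$ for the number of parts of $\lambda$ and substituting into the Giambelli determinant \eqref{eq:schurdef}, one notices that every term of the Leibniz expansion contributes the same power $x^{|\lambda|}$ (since $\sum_i\sigma(i)=\sum_i i$ for each $\sigma\in S_r$), so that
\[ s_\lambda(\bw)=\det\big(\epsilon_{\lambda_i+j-i}\big)_{1\le i,j\le r}\cdot x^{|\lambda|}\qquad\text{in }\F_2[x]/(x^{n+1}). \]
Hence $s_\lambda(\bw)\neq 0$ if and only if $|\lambda|\le n$ and the $\F_2$-matrix $A_\lambda:=(\epsilon_{\lambda_i+j-i})$ is invertible; and in that case $s_\lambda(\bw)=x^{|\lambda|}$, which equals $x^n$ for the partitions $\lambda$ in \eqref{eq:lambda} since $|(l+2+p,l+2,1^{q-p})|=2(l+2)+q=n$.

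Next I would assemble the values of $\epsilon$ that can occur. Applying Corollary \ref{cor:interval} with $\alpha=n/2-2^{p-1}=l+1$, one gets $\epsilon_k=1$ for every $k$ in the window $W:=[\,l+1,\;l+2^p\,]$ and $\epsilon_{l+2^p+1}=0$. Separately, since $2^p\mid n$ the lowest $p$ binary digits of $n$ vanish, so every $k$ with $0\le k\le 2^p-1$ has support disjoint from that of $n$ and therefore $\epsilon_k=1$ for such $k$ as well. Finally, writing $n=2^p\cdot(\text{odd})$, a direct computation gives $q=n-2(l+2)=2^p-2$ (which is $\ge 0$ because $n$ is even), so $l+2^p=l+q+2$.

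For the \emph{necessity} direction, recall that by Theorem \ref{thm:mod2avoiding-sigma-i} the hypothesis $s_\lambda\in\A_{\Si^2(l)}$ is equivalent to $\lambda_1\ge\lambda_2\ge l+2$. Assume in addition that $s_\lambda(\bw)\neq0$; then $|\lambda|\le n$, and since $\lambda_i\ge1$ for $i\ge3$ we obtain $\lambda_1\le n-(l+2)-(r-2)=l+2^p+2-r$. The key point is that the first two rows of $A_\lambda$ have all their indices in $W\cup\{l+2^p+1\}$: row $2$ has indices $\lambda_2-1,\dots,\lambda_2+r-2$, which lie in $W$ (lower end $\ge l+1$, upper end $\le\lambda_1+r-2\le l+2^p$), so row $2$ is the all-ones vector; row $1$ has indices $\lambda_1,\dots,\lambda_1+r-1$, all in $W$ except that the top one equals $l+2^p+1$ exactly when $\lambda_1=l+2^p+2-r$. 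If this last bound is not attained, rows $1$ and $2$ of $A_\lambda$ are equal and $\det A_\lambda=0$ — contrary to assumption. Hence $\lambda_1=l+2^p+2-r$, and then the displayed inequality chain is an equality throughout, which forces $\lambda_2=l+2$ and $\lambda_3=\cdots=\lambda_r=1$. Since $2\le r\le q+2$ (the upper bound coming from $\lambda_1\ge\lambda_2$), this is exactly the list of partitions \eqref{eq:lambda} (with the index $p$ there equal to $q+2-r$).

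For \emph{sufficiency}, take $\lambda$ as in \eqref{eq:lambda} and set $r=q-p+2$. Using the $\epsilon$-values above, $A_\lambda$ is the $r\times r$ matrix whose row $1$ is $(1,\dots,1,0)$ (its largest index is $l+2^p+1$), whose row $2$ is $(1,\dots,1)$ (all indices in $W$), and whose row $i$, for $3\le i\le r$, is $(0^{\,i-2},1^{\,r-i+2})$ (its indices run through $2-i,\dots,r-i+1$, and $r-i+1\le q\le 2^p-1$, so $\epsilon$ vanishes on the negative indices and is $1$ from index $0$ on). Replacing row $1$ by row $1+$ row $2$ turns it into $(0,\dots,0,1)$; cofactor expansion along this row reduces $\det A_\lambda$ to the $(r-1)\times(r-1)$ minor on rows $2,\dots,r$ and columns $1,\dots,r-1$, and one checks that this minor is upper triangular with $1$'s on the diagonal. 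Therefore $\det A_\lambda=1$, so $s_\lambda(\bw)=x^{n}$; in the extreme case $p=q$ (i.e.\ $r=2$) the matrix is just $\smx{1}{0}{1}{1}$, confirming the claim. The main obstacle I anticipate is purely the index bookkeeping in the necessity step — verifying that $|\lambda|\le n$ is precisely what keeps all entries of the first two rows inside the window $W$, together with taking care of the degenerate cases ($r=2$, and $l=-1$ when $n$ is a power of two, cf.\ Remark \ref{rmk:si2-1}); the determinant evaluations themselves are routine once the row patterns are identified.
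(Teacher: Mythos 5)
Your proof is correct and takes essentially the same route as the paper's: you reduce $s_\lambda(\bw)$ to a $0$-$1$ determinant, use Corollary \ref{cor:interval} to show the first two rows are equal (forcing vanishing) unless $\lambda$ has the shape \eqref{eq:lambda}, and then evaluate the resulting matrix with all-ones rows shifted by the staircase of $1$'s to get determinant $1$. The only cosmetic differences are that you justify the low-index entries explicitly via $2^p\mid n$ and compute the determinant by adding row $2$ to row $1$ rather than by the paper's first-column expansion.
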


We illustrate the general scheme of the computation on the smallest example.
\begin{example}\label{ex:Morinavoiding}
	The first case of Theorem \ref{k=t+1} is $n=20$, with binary expansion $10100$. It states that there is no Morin map $\RP^{20}\to \R^{27}$. This example was discovered by Brandyn Lee and was communicated to us by Rich\'ard Rim\'anyi.
	
	The inverse Stiefel-Whitney class $\bar w(\PP^{20})=1/w(\PP^{20})$ is
	$$\bar w(\PP^{20}) = 1+ x + x^2+ x^3+x^8+x^9+ x^{10} + x^{11}.$$
	Therefore the Thom polynomials $\tp_{\Si^2(j)}=\bw_{j+2}^2+\bw_{j+1}\bw_{j+3}$ vanish for $j\geq 7$, and
	$$\tp_{\Si^2(6)}(\bw(\P^{20}))=\bw_8^2+\bw_7\bw_9=x^{16}.$$
	On the other hand, for $\la=(11,9)$:
	$$ s_\la(\bw(\PP^{20}))=\bw_{11}\bw_9+\bw_{12}\bw_8=x^{20}$$
	is a nonzero element in the avoiding ideal of $\Si^2(7)$.
\end{example}
\begin{proof}[Proof of Proposition \ref{prop:avoiding_obstructions}]
	
	Let $\la=(\la_1,\la_2,\dots,\la_k)$ be a partition of length $k$ (i.e.  $\la_k>0$) such that $s_\lambda \in \A_{\Si^2(l)}$. Then by Theorem \ref{thm:mod2avoiding-sigma-i} we have
	$(l+2,l+2)\se \la$. In the remainder of the proof, we will treat  $s_\lambda(\bw)=\det(\bar{w}_{\lambda_i-i+j})$ ($\bw=1/w(\P^n)$) as the determinant of a $0$-$1$ matrix, since $s_\la(\bw)$ is a homogeneous element in $\F_2[x]/(x^{n+1})$, so
	\[
	s_\la(\bw)=s_\la(\bw)|_{x=1}\cdot x^{|\la|}.
	\]
	First, we show that if $\la$ is not of the form \eqref{eq:lambda}, then $s_\la(\bw)=0$. We can assume that $|\la|\leq n$, otherwise $ x^{|\la|}=0$. We study the first two rows:
	\begin{equation}\label{eq:tworows}
		\begin{tabular}{ c c c }
			$ \bar w_{\lambda_1}$&$\cdots$ &$\bar w_{\lambda_1+k-1}$  \\
			$\bar w_{\lambda_2-1}$&$\cdots $&$\bar w_{\lambda_2+k-2}$,
		\end{tabular}	
	\end{equation}
	and see whether these rows are linearly dependent (as elements of $\F_2^{k}$).
	By Corollary \ref{cor:interval},
	\begin{equation}\label{eq:wbar}
		\begin{split}
			\bar{w}_{r}&=1,\qquad r=n/2-2^{p-1},\ldots, n/2+2^{p-1}-1, \\
			\bar{w}_{n/2+2^{p-1}}&=0
		\end{split}
	\end{equation}
	We claim that all elements of \eqref{eq:tworows} are in the range of \eqref{eq:wbar}, and that $\la_1+k-1=n/2+2^{p-1}$ iff $\la$ is of the form \eqref{eq:lambda}. $\la_1$ and $\la_2-1$ are at least $l+1=n/2-2^{p-1}$, so we need to check only for the largest elements:
	\[n\geq |\la|\geq \la_1+\la_2+k-2\geq \la_1+(l+2)+k-2,\]
	implying that
	\[\la_1+k-1\leq n-l-1=n/2+2^{p-1},\]
	by the choice of $l$.

	These estimates and \eqref{eq:wbar} imply that both rows are constant 1 (and therefore $s_\la(\bw)=0$), unless $\la$ is of the form \eqref{eq:lambda}.
	
	It remains to calculate the determinant in these cases. For all choices of $j$ we obtain a matrix $M$ with $M_{ab}=1$ if and only if $b\geq a-1$, except that the top right entry is 0. It is an easy exercise to check that the determinant of these matrices modulo 2 is always one (expand using the first column), which implies our claim.
\end{proof}
\begin{remark} As we saw in Corollary \ref{cor:a2-si2} $\A_{\clos{\Si^2}(l)}=\A_{\clos{A_2}(l+1)}$, so we also understand which elements of $\A_{\clos{A_2}(l+1)}$ evaluate to non-zero classes in case a).
\end{remark}

\section{Euler characteristics of the $\Sigma^r$ loci}\label{sec:Sigmai}
In the previous section we showed that for certain values of $n$ (case a) of Definition \ref{def:terpai}) the avoiding ideal gives obstructions besides the Thom polynomials for the existence of Morin maps of $\RP^n$ to $\R^{n+l}$. In this section
 we interpret these obstructions as the Euler characteristics of the singularity loci using the  Parusi\'nski-Pragacz formula, which calculates the Segre-Schwartz-MacPherson classes of $\Si^r$-loci.
\subsection{The Parusi\'nski-Pragacz formula} 
As a first application of Theorem \ref{thm:ssw}, we can compute the Segre-Stiefel-Whitney classes of the $\Sigma^s(l)$ loci. The Segre-Schwartz-MacPherson classes of $\Si^r$-loci were established by Parusi\'nski-Pragacz \cite{ParusinskiPragacz}, see also \cite{FeherRimanyi2018}:
\begin{theorem}[\cite{ParusinskiPragacz}]\label{thm:PP}
	For $m\leq n$, the $\ssm$ Thom polynomial of $\Si^r\se \Hom(\C^m,\C^n)$ is
	$$ \ssm(\Sigma^r)=\sum_{s=r}^m(-1)^{s-r}\binom{s}{r}\Phi_{m,n}^s,\qquad \ssm(\overline{\Sigma}^r)=\sum_{s=r}^m(-1)^{s-r}\binom{s-1}{r-1}\Phi_{m,n}^s$$
	where $\Phi^s_{m,n}=\rho_{m,n} \Phi^s(n-m)$, where $\rho_{m,n}$ was defined in \eqref{eq:rhonp} and 
	\begin{equation}\label{eq:phisl}
			 \Phi^{s}(l)=\sum_{l(\mu)\leq s}\sum_{l(\nu)\leq s}(-1)^{|\mu|+|\nu|}D_{\mu,\nu}^{s,s+l}s_{(s+l)^s+\mu, \nu^T}
	\end{equation}
	where $s_\la$ denotes the Schur polynomial corresponding to the partition $\la$, (see \eqref{eq:schurdef} for the notational convention), $l(\mu)$ is the length of the partition $\mu$, and
	 \begin{equation}\label{eq:Dmunu}
	 	D_{\mu,\nu}^{s,t}=\det 	\binom{\mu_i+s-i+\nu_j+t-j}{\mu_i+s-i}_{i,j=1,\ldots, s}
	 \end{equation}

\end{theorem}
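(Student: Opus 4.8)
The plan is to prove the formula the way Chern--Schwartz--MacPherson classes of generic degeneracy loci are usually handled: resolve the closure $\overline{\Si}^r$ by a Grassmann bundle, push the constant function $\mathbbm{1}$ forward along the resolution, count fibre Euler characteristics, invert the resulting triangular binomial system, and finally carry out the relative Gysin integration over the Grassmann bundle to reach the closed determinantal formula \eqref{eq:phisl}.

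First I would introduce, for each $s\ge r$, the desingularization
\[
\widetilde{\Si}^s=\bigl\{(\varphi,W)\in \Hom(\C^m,\C^n)\times \Gr_s(\C^m)\ :\ W\subseteq \ker\varphi\bigr\}.
\]
Projection to the second factor identifies $\widetilde{\Si}^s$ with the total space of the bundle $\Hom(Q,\C^n)$ over $\Gr_s(\C^m)$, where $0\to S\to\C^m\to Q\to 0$ is the tautological sequence; in particular $\widetilde{\Si}^s$ is smooth, and the first projection $\pi^s\colon\widetilde{\Si}^s\to\Hom(\C^m,\C^n)$ is proper with image $\overline{\Si}^s$. The fibre of $\pi^s$ over a homomorphism whose kernel has dimension exactly $j\ge s$ is $\Gr_s(\C^j)$, so at the level of constructible functions
\[
\pi^s_*\mathbbm{1}_{\widetilde{\Si}^s}=\sum_{j\ge s}\chi\bigl(\Gr_s(\C^j)\bigr)\,\mathbbm{1}_{\Si^j}=\sum_{j\ge s}\binom{j}{s}\mathbbm{1}_{\Si^j}.
\]
Since the matrix $\bigl(\binom js\bigr)$ is unitriangular with inverse $\bigl((-1)^{j-s}\binom js\bigr)$, binomial inversion gives $\mathbbm{1}_{\Si^r}=\sum_{s\ge r}(-1)^{s-r}\binom sr\,\pi^s_*\mathbbm{1}_{\widetilde{\Si}^s}$, and summing over $j\ge r$ together with the identity $\sum_{j=r}^{s}(-1)^{s-j}\binom sj=(-1)^{s-r}\binom{s-1}{r-1}$ gives the analogous expression for $\mathbbm{1}_{\overline{\Si}^r}$ with the weight $\binom{s-1}{r-1}$. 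Applying MacPherson's transformation $c_*$, which is natural under proper pushforward, and using $c_*(\mathbbm{1}_{\widetilde{\Si}^s})=c(T\widetilde{\Si}^s)\cap[\widetilde{\Si}^s]$ by smoothness, I would obtain
\[
\csm(\Si^r)=\sum_{s\ge r}(-1)^{s-r}\binom sr\,\pi^s_*\bigl(c(T\widetilde{\Si}^s)\cap[\widetilde{\Si}^s]\bigr),
\]
and similarly for $\overline{\Si}^r$; dividing by the (equivariant) Chern class of the source $\Hom(\C^m,\C^n)$ converts $\csm$ into $\ssm$ and defines $\Phi^s_{m,n}$ as the Segre-normalized pushforward $\pi^s_*(c(T\widetilde{\Si}^s)\cap[\widetilde{\Si}^s])$. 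This already produces the stated shape of both formulas with the two binomial weights.

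It then remains to evaluate $\Phi^s_{m,n}$ explicitly. Over $\Gr_s(\C^m)$ one has $T\widetilde{\Si}^s=\Hom(S,Q)\oplus\Hom(Q,\C^n)$, and the normal bundle of $\widetilde{\Si}^s$ inside $\Hom(\C^m,\C^n)\times\Gr_s(\C^m)$ is $\Hom(S,\C^n)$, so by the self-intersection/projection formula $\Phi^s_{m,n}$ is the relative Gysin pushforward along the Grassmann bundle $\Gr_s(E)\to\mathrm{pt}$ (in the equivariant sense, over $B(\GL(m)\times\GL(n))$) of the product $c(\Hom(S,Q))\,c(\Hom(Q,\C^n))\,c_{\mathrm{top}}(\Hom(S,\C^n))$, divided by $c(\Hom(E,F))$. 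Expanding these Chern classes by the (dual) Cauchy formula and applying the classical Jacobi--Trudi determinant formula for the Gysin pushforward along a Grassmann bundle produces the double sum over partitions $\mu,\nu$ with sign $(-1)^{|\mu|+|\nu|}$, the rectangular shift $(s+l)^s$, the transposed partition $\nu^T$, and the binomial determinant $D^{s,s+l}_{\mu,\nu}$ as the coefficients; passing to the quotient (supersymmetric) variables identifies the result with $\rho_{m,n}\Phi^s(l)$ for $l=n-m$, which is \eqref{eq:phisl}.

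The conceptual steps -- resolution, fibre counting, binomial inversion, functoriality -- are routine; the real work, and the step I expect to be the main obstacle, is the last one: carrying out the Grassmann-bundle Gysin integration and matching it verbatim with \eqref{eq:phisl}, i.e.\ tracking all the dualities (the roles of $S$ versus $Q$, the transpose $\nu^T$), the signs, and the precise binomial determinant $D^{s,t}_{\mu,\nu}$, and checking that the outcome depends on $m,n$ only through $l=n-m$ (supersymmetry), so that $\Phi^s(l)$ and the substitution $\Phi^s_{m,n}=\rho_{m,n}\Phi^s(l)$ are well defined.
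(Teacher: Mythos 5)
The paper does not prove Theorem~\ref{thm:PP}: it is imported verbatim from Parusi\'nski--Pragacz \cite{ParusinskiPragacz}, so there is no in-text proof to compare against. Your sketch correctly reproduces the classical argument underlying the original paper: the Grassmann-bundle resolution $\widetilde{\Si}^s$, the constructible-function pushforward with fibre Euler characteristics $\chi(\Gr_s(\C^j))=\binom js$, binomial inversion (the identity $\sum_{j=r}^{s}(-1)^{s-j}\binom sj=(-1)^{s-r}\binom{s-1}{r-1}$ checks out), naturality of MacPherson's $c_*$, and the final Gysin integration via Jacobi--Trudi; the normal bundle identification $N(\widetilde{\Si}^s)\cong\Hom(S,\C^n)$ and the tangent decomposition $T\widetilde{\Si}^s\cong\Hom(S,Q)\oplus\Hom(Q,\C^n)$ are both right. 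You correctly flag the Grassmann-bundle Gysin computation as the only nonroutine step, and that is indeed where the determinant $D^{s,t}_{\mu,\nu}$ and the supersymmetry (dependence only on $l=n-m$) are established in \cite{ParusinskiPragacz}; you haven't carried it out, so this remains a sketch rather than a proof, but the architecture is sound and matches the standard treatment.
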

\begin{remark} \label{rem:duality} For the $m>n$ case, we can use the isomorphism $\Hom(\C^m,\C^n)\iso \Hom((\C^n)^\vee,(\C^m)^\vee)$ to obtain that for $r\geq m-n$
\[ \ssm(\Sigma^r(n,m))=\ssm(\Sigma^{r-m+n}(n,m))^\vee \]
where $(s_\la)^\vee=s_{\la^\vee}$ and $\la^\vee$ denotes the transpose of the partition $\la$, e.g. $(3,1)^\vee=(2,1,1)$.
\end{remark}
By Theorem \ref{thm:ssw} (\ref{item:bh}), we get the following formulas for the Segre-SW classes of the $\Si^r$-loci:
\begin{theorem}\label{thm:sigmarsullivan}
	$$ \ssu(\Sigma^r)=\sum_{s=r}^m\binom{s}{r}\Phi_{m,n}^s,\qquad \ssu(\overline{\Sigma}^r)=\sum_{s=r}^m\binom{s-1}{r-1}\Phi_{m,n}^s$$
\end{theorem}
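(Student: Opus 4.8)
The plan is to deduce Theorem \ref{thm:sigmarsullivan} directly from the complex Parusi\'nski--Pragacz formula (Theorem \ref{thm:PP}) by applying the Borel--Haefliger type statement in part (\ref{item:bh}) of Theorem \ref{thm:ssw}. Since $\ssm_{\eta_\C}=\sum a_I c_I$ gives $\ssu_\eta=\sum a_I w_I$, and since the Schur polynomials $s_\la$ form a basis expressible in the $c_I$ (resp.\ $w_I$) with integer coefficients via \eqref{eq:schurdef}, the substitution $c_i\mapsto w_i$ sends $s_\la(c)$ to $s_\la(w)$. Thus the whole formula \eqref{eq:phisl} for $\Phi^s(l)$ carries over verbatim after reduction of the integer coefficients $D_{\mu,\nu}^{s,t}$ and $(-1)^{|\mu|+|\nu|}$ modulo $2$; in particular the signs $(-1)^{|\mu|+|\nu|}$ disappear, but they are already absorbed into $\Phi^s_{m,n}$, so the only visible change between Theorem \ref{thm:PP} and Theorem \ref{thm:sigmarsullivan} is that the binomial coefficients $(-1)^{s-r}\binom{s}{r}$ and $(-1)^{s-r}\binom{s-1}{r-1}$ lose their signs and are read mod $2$.

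First I would check that $\eta_\C=\Si^r\se\Hom(\C^m,\C^n)$ is indeed the complexification (in the sense of the Remark following Theorem \ref{thm:BH}) of the real $\Si^r\se\Hom(\R^m,\R^n)$: the defining equations (vanishing of the appropriate minors) are the same and the real and complex codimensions agree, so this is immediate. Then I would observe that $\Si^r$ is a genuine contact singularity locus (a union of contact orbits, algebraic, contact-invariant), so Theorem \ref{thm:ssw} applies and $\ssu_{\Si^r}$ exists and equals $\sum a_I w_I$ when $\ssm_{\Si^r_\C}=\sum a_I c_I$. Next I would expand the right-hand side of the Parusi\'nski--Pragacz formula in the monomial basis $\{c_I\}$ (using \eqref{eq:schurdef} to write each $s_\la$ as a polynomial in the $c_i$), apply the coefficientwise substitution $c_i\mapsto w_i$, and re-collect into Schur polynomials $s_\la(w)$; because this expansion and re-collection is the same $\Z$-linear bookkeeping on both sides, the shape of the formula is preserved and only the coefficients get reduced mod $2$, at which point $-1\equiv 1$ kills the signs.

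The only genuinely substantive point — and the one I would state explicitly rather than wave at — is the claim that part (\ref{item:bh}) of Theorem \ref{thm:ssw} really applies here, i.e.\ that the $\GL$-equivariant (hence jet-space) incarnation of the Parusi\'nski--Pragacz $\ssm$ class is literally the $\ssm_{\eta_\C}$ appearing in Theorem \ref{thm:ssw}. This is the stability/$a$-fold-unfolding compatibility: Parusi\'nski--Pragacz compute $\ssm$ of $\Si^r$ in a fixed $\Hom(\C^m,\C^n)$, whereas $\ssm_{\eta_\C}$ is defined via the equivariant class on $J^k$ and stabilized in the quotient variables via $\rho_{m,n}$; one must note that $\Si^r$ is $1$-determined and that its $\ssm$-Thom polynomial in quotient variables, specialized by $\rho_{m,n}$, recovers the Parusi\'nski--Pragacz formula — this is exactly the content of $\Phi^s_{m,n}=\rho_{m,n}\Phi^s(n-m)$ in Theorem \ref{thm:PP}, so there is nothing new to prove, only to cite. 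I expect no real obstacle; the ``hard part'' is purely expository: making sure the reader sees that the passage from $\ssm$ to $\ssu$ is nothing more than reduction mod $2$ of a formula already written in the Schur basis, so that Theorem \ref{thm:sigmarsullivan} is an immediate corollary of Theorems \ref{thm:PP} and \ref{thm:ssw}(\ref{item:bh}).
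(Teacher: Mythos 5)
Your proposal is correct and follows exactly the paper's own route: the paper obtains Theorem \ref{thm:sigmarsullivan} by applying the Borel--Haefliger type statement, part (\ref{item:bh}) of Theorem \ref{thm:ssw}, to the Parusi\'nski--Pragacz formula of Theorem \ref{thm:PP}, so that the coefficients are read mod $2$ and the signs disappear. The extra points you flag (that the real $\Si^r$ has the complex $\Si^r$ as its complexification, and that stability in quotient variables via $\rho_{m,n}$ matches the equivariant definition of $\ssm_{\eta_\C}$) are exactly the implicit ingredients the paper relies on.
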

We will apply these formulas to compute Euler characteristics of the $\Sigma^r$ loci for $r=1$ and $r=2$.

\subsection{Euler characteristics of the $\Sigma^1$ locus}

 By definition, a map $f$ is an immersion, if $\Si^1(f)$ is empty. The simplest obstructions for the nonvanishing of $\Si^1$-points are provided by the Stiefel-Whitney classes of its normal bundle. Indeed, if $M^m\to\R^{m+l}$ is a codimension $l$ immersion, then it has a rank $l$ normal bundle $\nu$, and therefore $w_i(\nu)=0$ for $i>l$. In fact, the Stiefel-Whitney classes of the normal bundle describe the Thom polynomial: $[\Si^1(f)]=w_{l+1}(f)$.

The stable avoiding ideal $\mathcal{A}_{\Si^1(l)}$ is generated by the Stiefel-Whitney classes $w_i$ for $i\geq l$.  Consequently it does not give obstructions besides the Thom polynomials. However, studying the Stiefel-Whitney class of $\Si^1$ gives information on the geometry of the singular locus.

As a first application of the Parusi\'nski-Pragacz formula, here are the first few terms of the Segre-Stiefel-Whitney class of $\Si^1$:
\begin{proposition}\label{prop:Sigma1}
	The Segre-Stiefel-Whitney class of $\Si^1(l)$ is
	$$ \ssu_{\Si^1(l)}=\Phi^1+\Phi^3+\Phi^5+\ldots$$
where $\Phi^i$ is defined in \eqref{eq:phisl}, for example
\[
\Phi^1=\sum_{i=0}^{\infty}\sum_{j=0}^i\binom{l+i}{j}s_{l+1+j,1^{i-j}}.
\]
	Therefore, up to cohomological degree $3l+8$ ($\Phi^3$ starts at degree $3l+9$):
	\begin{equation}\label{eq:ssu1}
		(\ssu_{\Si^1(l)})_{\leq 3l+8}=\Phi^1_{\leq 3l+8}=\sum_{i=0}^{3l+8}\sum_{j=0}^i\binom{l+i}{j}s_{l+1+j,1^{i-j}}.
	\end{equation}
\end{proposition}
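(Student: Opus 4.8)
The plan is to apply Theorem~\ref{thm:sigmarsullivan} with $r=1$ and use the fact that only odd values of $s$ contribute a nonzero binomial coefficient $\binom{s}{1}$ mod~$2$. First I would note that $\binom{s}{1}\equiv s \pmod 2$, so in the sum $\ssu(\Si^1)=\sum_{s\ge 1}\binom{s}{1}\Phi_{m,n}^s$, the terms with $s$ even vanish modulo~$2$, leaving exactly $\Phi^1+\Phi^3+\Phi^5+\cdots$, which is the stated formula. Here one must also pass to the stable setting: the Segre-SW class of the singularity $\ssu_{\Si^1(l)}$ is by definition (Section~\ref{sec:ssw4eta}, Theorem~\ref{thm:ssw}(\ref{item:bh})) the mod~$2$ reduction $c_i\mapsto w_i$ of $\ssm_{\Si^1_\C(l)}$, with the relative codimension $l=n-m$ fixed; so I would replace $\Phi^s_{m,n}$ by its stable incarnation $\Phi^s(l)$ as in~\eqref{eq:phisl}, obtaining $\ssu_{\Si^1(l)}=\Phi^1(l)+\Phi^3(l)+\Phi^5(l)+\cdots$.

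Next I would determine the lowest cohomological degree in which $\Phi^s(l)$ lives. From~\eqref{eq:phisl}, the Schur function $s_{(s+l)^s+\mu,\ \nu^T}$ has degree $s(s+l)+|\mu|+|\nu|$, minimized at $\mu=\nu=\emptyset$, giving degree $s(s+l)$. Thus $\Phi^1(l)$ starts in degree $l+1$ and $\Phi^3(l)$ starts in degree $3(3+l)=3l+9$. Consequently everything up to degree $3l+8$ comes solely from $\Phi^1(l)$, which gives the truncation identity $(\ssu_{\Si^1(l)})_{\le 3l+8}=\Phi^1(l)_{\le 3l+8}$.

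Finally I would compute $\Phi^1(l)$ explicitly. For $s=1$ all partitions $\mu,\nu$ have length $\le 1$, so $\mu=(a)$, $\nu=(b)$ with $a,b\ge 0$; then $\nu^T=(1^b)$, and the $1\times 1$ determinant $D^{1,1+l}_{\mu,\nu}=\binom{a+b+l}{a}$. Reindexing by $i=a+b$ (the total "excess" degree) and $j=a$, so that $\binom{a+b+l}{a}=\binom{l+i}{j}$, and observing $(-1)^{|\mu|+|\nu|}\equiv 1\pmod 2$, one gets $\Phi^1(l)=\sum_{i\ge 0}\sum_{j=0}^i\binom{l+i}{j}\,s_{l+1+j,\,1^{i-j}}$, since $(s+l)^s+\mu=(l+1+a)$ when $s=1$, and $\nu^T=1^{i-j}$. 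Truncating to degrees $\le 3l+8$, i.e. $i\le 2l+7$ (the degree of $s_{l+1+j,1^{i-j}}$ is $l+1+i$), yields~\eqref{eq:ssu1}; strictly the sum should be $\sum_{i=0}^{2l+7}$, but writing $\sum_{i=0}^{3l+8}$ is harmless since the extra terms have degree exceeding $3l+8$ and are killed by the truncation. The only mild subtlety — and the one point I would be careful about — is the bookkeeping of the reindexing and the verification that the sign factors all become trivial mod~$2$; the rest is a direct substitution into the Parusiński–Pragacz formula.
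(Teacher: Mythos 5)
Your proposal is correct and follows the same route the paper intends: Proposition \ref{prop:Sigma1} is a direct specialization of Theorem \ref{thm:sigmarsullivan} (the mod 2 Parusi\'nski--Pragacz formula) at $r=1$, where $\binom{s}{1}\equiv s\pmod 2$ kills the even-$s$ terms, the degree count $s(s+l)$ shows $\Phi^3$ begins in degree $3l+9$, and the $1\times 1$ determinant $D^{1,1+l}_{(a),(b)}=\binom{a+b+l}{a}$ with the reindexing $i=a+b$, $j=a$ gives the stated expansion of $\Phi^1$. Your remark about the upper summation limit $3l+8$ versus $2l+7$ is a fair observation about the paper's notation, but it does not affect correctness since the truncation removes the surplus terms.
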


\begin{example}\label{ex:Sigma1}
As an application of Proposition \ref{prop:Sigma1}, we compute the Stiefel-Whitney class of the singular points of a generic map $f:\RP^{10}\to \R^{11}$.
This is an example where the fundamental class and the Euler characteristic of the singularity locus is 0, but the Stiefel-Whitney class is not, which implies that the Euler characteristic of a certain slice is nonzero.

 Writing $H^*(\RP^{10};\F_2)=\F_2[x]/(x^{11})$, we have (e.g.\ using Proposition \ref{prop:invSW}):
\[
\bar{w}(\RP^{10})=1+x+x^4+x^5
\]
Proposition \ref{prop:Sigma1} describes $\ssu_{\Sigma^1(1)}$ up to degree 11, so completely for $\RP^{10}$:
\[
\ssu_{\Si^1(1)}=(s_2+s_{2,1}+s_{2,1,1}+\ldots)+(s_{3,1}+\ldots)+(s_4+\ldots)+\ldots
\]
Evaluating this on $\bar{w}(\RP^{10})$, by Theorem \ref{thm:ssw} \eqref{item:trans-pullback},
	\[ \ssu(\Si^1(f))=0+0+x^4+\ldots\]
In fact, by completing this computation, one obtains that there are no higher degree terms and that the whole expression is equal to $x^4$. We omit further details. To obtain the Stiefel-Whitney class we multiply with $w(\RP^{10})=1+x+x^2+x^3+x^8+x^9+x^{10}$:
	$$ \su(\Si^1(f))=x^4+x^5+x^6+x^7.$$
By Proposition  \ref{slice} we have:
		\[\su\big(\Si^1(f)\cap \RP^i\se \RP^i\big)=j^*\ssu\big(\Si^1(f)\big)\cdot w(\RP^i)=y^4\cdot (1+y)^{i+1}\]
		where $j:\RP^i\to \RP^{10}$ is a generic linear embedding and $H^*(\RP^i)=\F_2[y]/(y^{i+1})$. The Euler characteristic is then the coefficient of $y^i$: \[\chi\big(\Si^1(f)\cap \RP^i\big)= \binom{i+1}{i-4}.\] There are two cases when these binomial coefficients are odd: $i=4$ and $6$. This implies that for generic $f$ the intersection $\RP^4\cap \Si^1(f)$ is a smooth surface of odd Euler characteristic and $\RP^6\cap \Si^1(f)$ is 4-dimensional, of odd Euler characteristic. In particular, the Euler characteristic of $\Si^1(f)$ is even.

\end{example}

\subsection{Euler characteristics of the $\Sigma^2$ locus}
In this section, using Stiefel-Whitney classes and the Parusi\'nski-Pragacz formula, we will give a geometric meaning to the obstructions of Theorem \ref{k=t+1}.

\begin{theorem}\label{thm:euler-of-sigma2}
	Assume that the binary expansion of $n$ contains no consecutive 1's. Let $p$ be the largest number, such that $2^p$ divides $n$, and assume that $p\geq 1$. Then for a generic smooth map $f:\RP^n\to \R^{n+l}$, where $l=n/2-2^{p-1}-1$ the locus $\Si^2(f)$ is a smooth manifold of dimension $d=2^{p}-2$ of odd Euler characteristic. In particular it is nonempty, and for $p\geq 2$ unorientable.
\end{theorem}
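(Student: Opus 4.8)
The plan is to read $\chi_2(\Si^2(f))$ off the top-degree term of the Sullivan class $\su(\Si^2(f)\subset\RP^n)$ (see \eqref{eq:su}), turn this into a universal computation via the transversal pullback property, and evaluate the resulting sum of binomial coefficients with Lucas's theorem. First I would fix the setup. For generic $f$ the jet $j^kf$ is transverse to the $\Si^2$ stratum of $J^k(n,n+l)$ by Theorem~\ref{thm:thom-jet}, so $\Si^2(f)$ is a smooth submanifold of $\RP^n$ of codimension equal to the degree $2(l+2)$ of the Thom polynomial $\tp_{\Si^2(l)}=s_{l+2,l+2}$; substituting $l=n/2-2^{p-1}-1$, its dimension is $n-2(l+2)=2^{p}-2=d$. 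Let $\bw(\RP^n)=1/w(\RP^n)$ be the total Stiefel--Whitney class of the virtual normal bundle $f^*T\R^{n+l}\ominus T\RP^n=-T\RP^n$. Combining Theorem~\ref{thm:ssw}\,(\ref{item:trans-pullback}) with \eqref{eq:ssu_su} gives
\[ \su(\Si^2(f)\subset\RP^n)=\ssu_{\Si^2(l)}(\bw(\RP^n))\cdot w(\RP^n). \]

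Next I would collapse the right-hand side. In the Schur expansion of the universal series $\ssu_{\Si^2(l)}$ every partition $\lambda$ satisfies $(l+2)^2\subseteq\lambda$: this is visible directly from the Parusi\'nski--Pragacz formula (Theorem~\ref{thm:sigmarsullivan}), and also holds abstractly since the homogeneous parts of $\ssu_{\Si^2(l)}$ lie in $\A_{\clos{\Si^2}(l)}$, which by Theorem~\ref{thm:mod2avoiding-sigma-i} is the $\F_2$-span of exactly those $s_\lambda$. The hypotheses put $n$ in case a) of Definition~\ref{def:terpai}, so Proposition~\ref{prop:avoiding_obstructions} applies: among such $\lambda$ one has $s_\lambda(\bw(\RP^n))\ne0$ precisely when $\lambda=\lambda^{(j)}:=(l+2+j,\,l+2,\,1^{\,d-j})$ for some $0\le j\le d$, and then $s_\lambda(\bw(\RP^n))=x^n$. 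Since each $\lambda^{(j)}$ has weight $n$, the class $\ssu_{\Si^2(l)}(\bw(\RP^n))$ is concentrated in top degree, so multiplication by $w(\RP^n)=1+\dots$ is harmless; reading off \eqref{eq:su} yields $\chi_2(\Si^2(f))=\sum_{j=0}^{d}c_{\lambda^{(j)}}$, where $c_\lambda$ is the coefficient of $s_\lambda$ in $\ssu_{\Si^2(l)}$.

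The coefficients come from Theorem~\ref{thm:sigmarsullivan}: $\ssu_{\Si^2(l)}=\sum_{s\ge2}\binom{s}{2}\Phi^s(l)$, and for $s\ge3$ each partition occurring in $\Phi^s(l)$ (see \eqref{eq:phisl}) has at least $s\ge3$ parts of size $\ge s+l\ge2$, while $\lambda^{(j)}$ has at most two parts exceeding $1$; hence only $\Phi^2(l)$ contributes, and there the unique index pair producing $\lambda^{(j)}$ is $\mu=(j)$, $\nu=(d-j)$. Expanding the $2\times2$ determinant \eqref{eq:Dmunu} then gives, mod $2$, $c_{\lambda^{(j)}}\equiv\binom{d+l+2}{j+1}+\binom{l+j+1}{j+1}$; writing $A=d+l+2$ and $B=l+1$ (so $A-B=2^{p}-1$), $\chi_2(\Si^2(f))\equiv\sum_{i=1}^{2^{p}-1}\binom{A}{i}+\sum_{i=1}^{2^{p}-1}\binom{B+i-1}{i}$. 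From $2^{p}\,\|\,n$ and the no-consecutive-$1$'s hypothesis one computes the binary expansions: $B=n/2-2^{p-1}$ has zeros in bit positions $0,\dots,p$, and $A=B+(2^{p}-1)$ has all of bits $0,\dots,p-1$ set and coincides with $B$ above position $p$. By Lucas's theorem (cf.\ Proposition~\ref{prop:binomial}), every $i\in\{1,\dots,2^{p}-1\}$ lies inside $A$ bitwise, so the first sum equals $2^{p}-1\equiv1$; and the hockey-stick identity rewrites the second sum as $\binom{A}{B}-1$, which is $\equiv0$ because $B$ lies inside $A$ bitwise. Hence $\chi_2(\Si^2(f))=1$, so $\chi(\Si^2(f))$ is odd and in particular $\Si^2(f)\ne\emptyset$. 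For $p\ge2$ we have $d=2^{p}-2\equiv2\pmod{4}$, and a closed orientable manifold of dimension $\equiv2\pmod{4}$ has even Euler characteristic (the remark in Section~\ref{sec:SegreSullivan}); as $\chi_2(\Si^2(f))=1$, $\Si^2(f)$ is unorientable.

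I expect the main obstacle to be the middle stretch: showing that $\ssu_{\Si^2(l)}(\bw(\RP^n))$ collapses to a single top-degree class with coefficient $\sum_j c_{\lambda^{(j)}}$ -- this rests squarely on Proposition~\ref{prop:avoiding_obstructions} -- and then executing the two Lucas-type computations cleanly. A secondary point of care is that $\Si^2(f)$ need not be closed in $\RP^n$: when $n=2^{p}$ with $p\ge3$ the corank-$3$ stratum $\Si^3(f)$ has nonnegative expected dimension and lies in $\clos{\Si^2}(f)$. In such cases one should note that the top term of $\su(\Si^2(f)\subset\RP^n)$ still computes $\chi_2$ (the compactly supported mod $2$ Euler characteristic, which has the same parity as the ordinary one for a manifold), and for the unorientability statement replace the above Poincar\'e-duality argument by one using compactly supported cohomology, or by a direct computation of $w_1(T\Si^2(f))$ from $w(T\RP^n)|_{\Si^2(f)}=w(T\Si^2(f))\,w(\nu)$.
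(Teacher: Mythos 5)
Your proposal follows essentially the same route as the paper: express $\su(\Si^2(f)\subset\RP^n)$ through the universal Segre--SW class via Theorem \ref{thm:ssw}\,(\ref{item:trans-pullback}), expand by Parusi\'nski--Pragacz (Theorem \ref{thm:sigmarsullivan}), use Proposition \ref{prop:avoiding_obstructions} to see that only the hook-type partitions $\lambda^{(j)}=(l+2+j,l+2,1^{d-j})$ survive and each gives $x^n$, and then evaluate the resulting coefficient sum by Lucas. Your shape argument eliminating $\Phi^s$ for $s\ge 3$ (at least three parts of size $\ge 2$ versus at most two in $\lambda^{(j)}$) is in fact a little cleaner than the paper's degree estimate, your determinant $c_{\lambda^{(j)}}\equiv\binom{d+l+2}{j+1}+\binom{l+j+1}{j+1}$ matches the paper's $D^{2,l+2}_{(j),(d-j)}$, and your hockey-stick reorganization of the Lucas computation gives the same total $1$ as the paper's term-by-term evaluation ($D\equiv1$, an odd number $d+1=2^p-1$ of terms). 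Your remarks on $\chi_c$ versus $\chi$ and on the orientability argument in the noncompact situation are careful points the paper passes over.

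The one genuine gap is the subcase $n=2^p$, which is not marginal: it is included in the hypotheses (already $n=4$, $p=2$ gives $l=-1$, a map $\RP^4\to\R^3$), and there your use of the formula $\ssu_{\Si^2(l)}=\sum_{s\ge2}\binom{s}{2}\Phi^s(l)$ is outside its stated range. Theorem \ref{thm:PP}/Theorem \ref{thm:sigmarsullivan} is formulated only for $m\le n$, i.e.\ $l\ge0$; for $l=-1$ the paper does not extend the formula verbatim but passes through the duality of Remark \ref{rem:duality}, writing $\Phi^i(-1)=\Phi^{i-1}(1)^\vee$ and reading the coefficients $\binom{n-1}{j}$ off \eqref{eq:ssu1}. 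Plugging $l=-1$ directly into \eqref{eq:phisl} is not the same expression as the dual one (the partition supports differ, and negative-argument binomials such as $\binom{l}{0}=\binom{-1}{0}$ appear), so it needs justification. As it happens, for the surviving hooks the two computations agree --- your coefficient $\binom{n-1}{j+1}$ equals the dual computation's $\binom{n-1}{n-2-j}$ by symmetry, and both sums give $1$ mod $2$ since $n-1=2^p-1$ --- so your final answer is right, but as written this step rests on an unlicensed extension of the Parusi\'nski--Pragacz formula rather than on an argument; either invoke the duality of Remark \ref{rem:duality} as the paper does, or prove that \eqref{eq:phisl} remains valid at $l=-1$.
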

Recall from Theorem \ref{k=t+1} that in these examples (case a) of Theorem \ref{k=t+1}) Thom polynomials do not obstruct:
\emph{all Thom polynomials $[\Si^2(g)]$ vanish for $g:\RP^n\to\R^{n+j}$ and $j\geq l$}, but the stable avoiding ideal gives an obstruction.

Theorem \ref{thm:euler-of-sigma2} gives a geometric interpretation of this obstruction result.
In fact, from Theorem \ref{thm:euler-of-sigma2} and Theorem \ref{k=t+1} it follows that the Euler characteristic is the only relevant obstruction in the stable avoiding ideal besides Thom polynomials for the existence of Morin maps $\RP^n\to \R^{n+l}$:
\begin{corollary}\label{thm:onlyeulerchar}
	Let $\bar{w}=1/w(\RP^n)$. Assume that $\tp_{\Sigma^2(j)}(\bar{w})=0$ for $j\geq l$ and $\chi(\Sigma^2(f))=0$ for a generic smooth map $f:\RP^n\to\R^{n+l}$.
	Then for all obstructions $s_\la$ in $\A_{\Sigma^2(l)}$, $s_\la(\bar{w})=0$.
\end{corollary}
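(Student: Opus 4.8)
The plan is to reduce the statement to a comparison of the two numerical invariants $\tau(\RP^n,\Si^2)$ and $\kappa(\RP^n,\Si^2)$ computed in Theorem~\ref{k=t+1}, combined with the geometric interpretation of the gap between them supplied by Theorem~\ref{thm:euler-of-sigma2}. The whole proof is a bookkeeping argument, so I will first isolate the monotonicity I need and then turn the crank.

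First I would record that the stable avoiding ideals of $\Si^2$ are nested in the relative codimension: by Theorem~\ref{thm:mod2avoiding-sigma-i} (with $i=2$) the ideal $\A_{\Si^2(l')}$ is the $\F_2$-span of the Schur classes $s_\la$ whose partition contains the rectangle $(l'+2,l'+2)$, and since $(l''+2,l''+2)\se(l'+2,l'+2)$ for $l'\ge l''$, we get $\A_{\Si^2(l')}\se\A_{\Si^2(l'')}$ whenever $l'\ge l''$ (this is also Remark~\ref{flat0} applied to the zero-unfoldings $\Si^2(l)^\sharp=\Si^2(l+1)$). Because $\kappa(\RP^n,\Si^2)$ is by definition the least $l'$ for which $\ga(\bw)=0$ for every $\ga\in\A_{\Si^2(l')}$, it therefore suffices to prove the single inequality $l\ge\kappa(\RP^n,\Si^2)$.

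Next I would feed in the hypotheses. The assumption $\tp_{\Si^2(j)}(\bw)=0$ for all $j\ge l$ says exactly $l\ge\tau(\RP^n,\Si^2)$ by the definition of $\tau$. By Theorem~\ref{k=t+1}, $\tau(\RP^n,\Si^2)$ equals either $\kappa(\RP^n,\Si^2)$ or $\kappa(\RP^n,\Si^2)-1$, so in all cases $l\ge\kappa(\RP^n,\Si^2)-1$; if $l\ge\kappa(\RP^n,\Si^2)$ we are done by the previous paragraph. The only remaining possibility is $l=\kappa(\RP^n,\Si^2)-1$, which forces $\tau(\RP^n,\Si^2)=\kappa(\RP^n,\Si^2)-1$, and by Theorem~\ref{k=t+1} this happens only when $n$ lies in case~a) of Definition~\ref{def:terpai} with $p\ge2$. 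Using $\kappa(\RP^n,\Si^2)=\ka(n)-1$ and the case~a) formula $\ka(n)=(n-2^p)/2+1$, this pins down $l=\ka(n)-2=n/2-2^{p-1}-1$, which is precisely the value of $l$ occurring in Theorem~\ref{thm:euler-of-sigma2}.

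Finally I would invoke Theorem~\ref{thm:euler-of-sigma2} (applicable since $p\ge 2\ge 1$): for a generic $f:\RP^n\to\R^{n+l}$ with this $l$, the locus $\Si^2(f)$ is a closed manifold of odd Euler characteristic, which is incompatible with the hypothesis $\chi(\Si^2(f))=0$. Hence the case $l=\kappa(\RP^n,\Si^2)-1$ cannot occur, so $l\ge\kappa(\RP^n,\Si^2)$ and the proof is complete. I do not anticipate a real obstacle; the two points that need care are the nestedness of the avoiding ideals in $l$ (so that ``every obstruction vanishes at level $l$'' genuinely follows from the minimality built into $\kappa$) and the verification that the exceptional value $l=\kappa(\RP^n,\Si^2)-1$ lands exactly on the hypothesis of Theorem~\ref{thm:euler-of-sigma2} — in particular that it forces case~a) with $p\ge 2$ rather than a $p=1$ situation or another Terpai case that Theorem~\ref{thm:euler-of-sigma2} would not cover.
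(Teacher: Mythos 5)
Your proof is correct and takes essentially the same route as the paper, which states this corollary as a direct consequence of Theorems \ref{k=t+1} and \ref{thm:euler-of-sigma2} — exactly the combination you carry out. The details you make explicit (nestedness $\A_{\Si^2(l')}\se\A_{\Si^2(l'')}$ for $l'\ge l''$, the reduction to $l\ge\kappa(\RP^n,\Si^2)$, and the check that the only possible exception $l=\kappa(\RP^n,\Si^2)-1$ forces case a) with $p\ge 2$ and the exact codimension covered by Theorem \ref{thm:euler-of-sigma2}) are precisely the bookkeeping the paper leaves implicit.
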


 Again, we first illustrate the computation on the smallest example, cf.\ Example \ref{ex:Morinavoiding}.
\begin{proposition}\label{prop:20}
	Let $f:\RP^{20}\to \R^{27}$ be a generic map. Then
	$$\su(\overline{\Si}^2(f))=x^{20}$$
\end{proposition}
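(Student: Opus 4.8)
The plan is to express $\su(\overline{\Si}^2(f))$ in terms of the universal Segre--Stiefel-Whitney class $\ssu_{\overline{\Si}^2(7)}$ via the transversal pullback property, and then to evaluate it on $\bar w(\RP^{20})$ by combining the Parusi\'nski--Pragacz formula with the description of the avoiding ideal already obtained in Proposition~\ref{prop:avoiding_obstructions}.

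First I would record the numerics. The map $\RP^{20}\to\R^{27}$ has relative codimension $l=7$, and since $20=2^2\cdot 5$ this is exactly the case $n=20$, $p=2$ of Theorem~\ref{thm:euler-of-sigma2} ($l=n/2-2^{p-1}-1$). The Thom polynomial $\tp_{\Si^2(7)}=s_{9,9}$ has degree $18$ while $\tp_{\Si^3(7)}$ has degree $30>20$, so for generic $f$ the locus $\Si^3(f)$ is empty and $\overline{\Si}^2(f)=\Si^2(f)$ is a smooth surface in $\RP^{20}$, in particular an Euler space carrying Stiefel-Whitney classes. Since $\RP^{20}$ is a compact real algebraic manifold and $f^*T\R^{27}$ is trivial (hence $w_i(f)=\bar w_i(\RP^{20})$), Theorem~\ref{thm:ssw}~(\ref{item:trans-pullback}) together with \eqref{eq:ssu_su} gives
\[
\su(\overline{\Si}^2(f))=\ssu(\overline{\Si}^2(f)\subset\RP^{20})\cdot w(\RP^{20})=\ssu_{\overline{\Si}^2(7)}(f)\cdot w(\RP^{20}),
\]
so it suffices to compute $\ssu_{\overline{\Si}^2(7)}(f)\in H^*(\RP^{20};\F_2)$, i.e.\ to know $\ssu_{\overline{\Si}^2(7)}$ up to cohomological degree $20$.

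For that I would use Theorem~\ref{thm:sigmarsullivan}: $\ssu_{\overline{\Si}^2(7)}=\sum_{s\ge 2}\binom{s-1}{1}\Phi^s(7)$. Below degree $21$ only the term $s=2$ survives, since $\binom{1}{1}=1$ whereas for $s\ge 3$ the class $\Phi^s(7)$ starts in degree $s(s+7)\ge 30$. By \eqref{eq:phisl}, the part of $\Phi^2(7)$ of degree at most $20$ is the $\F_2$-combination of $s_{9,9}$, $s_{10,9}$, $s_{9,9,1}$, $s_{11,9}$, $s_{10,10}$, $s_{10,9,1}$, $s_{9,9,2}$, $s_{9,9,1,1}$ indexed by the pairs $(\mu,\nu)$ with $|\mu|+|\nu|\le 2$, with coefficients $\pm D^{2,9}_{\mu,\nu}$ given by \eqref{eq:Dmunu}. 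Evaluating on $\bar w(\RP^{20})$, Proposition~\ref{prop:avoiding_obstructions} (here $n=20$, $l=7$, so $q=n-2(l+2)=2$) shows that among these partitions only $(11,9)$, $(10,9,1)$, $(9,9,1,1)$ give a nonzero value, namely $x^{20}$; their coefficients in $\Phi^2(7)$, arising from $(\mu,\nu)=((2),\emptyset)$, $((1),(1))$, $(\emptyset,(2))$, equal $D^{2,9}_{(2),\emptyset}=\binom{11}{3}-\binom{10}{3}=45$, $D^{2,9}_{(1),(1)}=\binom{11}{2}-\binom{9}{2}=19$ and $D^{2,9}_{\emptyset,(2)}=\binom{11}{1}-\binom{8}{1}=3$, all odd. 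Hence $\ssu_{\overline{\Si}^2(7)}(f)=x^{20}$, and since $w(\RP^{20})$ contributes only terms of degree $>20$ we conclude $\su(\overline{\Si}^2(f))=x^{20}$.

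The step I expect to require the most care is the Parusi\'nski--Pragacz bookkeeping: listing correctly the partitions $\lambda\supseteq(9,9)$ of size at most $20$ that occur in $\Phi^2(7)$, discarding $\Phi^s(7)$ for $s\ge 3$ via the degree bound, and evaluating the three determinants $D^{2,9}_{\mu,\nu}$ modulo $2$. An alternative route that sidesteps Proposition~\ref{prop:avoiding_obstructions} is to evaluate all eight Schur polynomials above directly on $\bar w(\RP^{20})=1+x+x^2+x^3+x^8+x^9+x^{10}+x^{11}$ as $0$--$1$ determinants; this gives the same result and moreover shows that the fundamental class (the $x^{18}$-coefficient) and the $x^{19}$-coefficient of $\su(\overline{\Si}^2(f))$ vanish, so that the surviving $x^{20}$ is precisely $\chi_2(\Si^2(f))\cdot[pt]$, in agreement with Theorem~\ref{thm:euler-of-sigma2}.
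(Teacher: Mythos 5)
Your proposal is correct and follows essentially the same route as the paper: reduce to $\Phi^2(7)$ via Theorem \ref{thm:sigmarsullivan} and the degree bound, list the eight partitions containing $(9,9)$ of size at most $20$, identify $(11,9)$, $(10,9,1)$, $(9,9,1,1)$ as the only ones evaluating to $x^{20}$ on $\bar w(\RP^{20})$, check that the coefficients $45$, $19$, $3$ are odd, and multiply by $w(\RP^{20})$. The only cosmetic difference is that you invoke Proposition \ref{prop:avoiding_obstructions} to sort out which Schur polynomials survive, while the paper evaluates the eight determinants directly for this specific case (your stated alternative), and the same three coefficients appear in both arguments.
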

\begin{proof}
	By Theorem \ref{thm:sigmarsullivan}:
	$$\ssu(\overline{\Si}^2(f))=\Phi^2_{20,27}(f)$$
	since $\deg\Phi^i>20$ for $i>3$. Then
	$$ \Phi^2=\sum_{l(\mu)\leq 2}\sum_{l(\nu)\leq 2} D_{\mu,\nu}^{2,9}s_{9^2+\mu, \nu^T}=a_{18}+a_{19}+a_{20}.$$
	which is a sum of elements $a_i$ of cohomological degree $i$. Note that $a_{18}=\tp_{\Si^{2}(7)}(f)=0$.
	In the sum, there are 8 such partitions with degree $\leq 20$:
	$$ (9,9), (9,9,1), (10,9), (9,9,2), (9,9,1,1), (10,9,1), (10,10), (11,9)$$
	Recall that
	$$w(f)=\bar w(\RP^{20}) = 1+ x + x^2+ x^3+x^8+x^9+ x^{10} + x^{11}.$$
	As Schur polynomials, $(9,9),  (9,9,1), (10,9),(9,9,2), (10,10)$ all vanish after the substitution $\bar w(\PP^{20})$.
	The remaining three Schur polynomials $(9,9,1,1), (10,9,1)$ and $(11,9)$ are each equal to $x^{20}$. The coefficients $D_{\mu, \nu}^{2,9}$ are as follows:
	$$D_{(0,0),(2,0)}^{2,9}=\det\left(\begin{array}{cc}
		\binom{11}{1}&  \binom{8}{1}\\[3mm]
		\binom{10}{0}& \binom{7}{0}
	\end{array}\right)=3,\qquad
	D_{(1,0),(1,0)}^{2,9}=\det\left(\begin{array}{cc}
		\binom{11}{2}& \binom{9}{2} \\[3mm]
		\binom{9}{0}& \binom{7}{0}
	\end{array}\right)=19,$$
	$$	D_{(2,0),(0,0)}^{2,9}=\det\left(\begin{array}{cc}
		\binom{11}{3}&\binom{10}{3}  \\[3mm]
		\binom{8}{0}&\binom{7}{0}
	\end{array}\right)=45.$$
	Therefore $\ssu(\overline{\Si}^2(f))=x^{20}$. This implies that the Stiefel-Whitney class is
\[
		\su(\overline{\Si}^2(f))=\ssu(\overline{\Si}^2(f))\cdot w(\RP^{20})=x^{20}.
\]
\end{proof}
\begin{corollary} For a generic map $f:\RP^{20}\to \R^{27}$ the ${\Si}^2$ locus is a smooth surface of odd Euler characteristic. In particular it is nonempty and unorientable.
\end{corollary}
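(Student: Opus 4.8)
The plan is to deduce both assertions from Proposition \ref{prop:20} together with the defining property \eqref{eq:su} of Sullivan's Stiefel-Whitney class. First I would pin down the geometry of the locus. Inside $\Hom(\C^{20},\C^{27})$ the contact stratum $\Si^2$ has codimension $2\cdot(27-20+2)=18$, whereas every deeper stratum has codimension strictly greater than $\dim\RP^{20}=20$ --- for instance $\codim\Si^3=3\cdot(27-20+3)=30$, and similarly for $\Si^{2,1}$ and the remaining strata of $\overline{\Si}^2$. Hence, by the Thom jet transversality theorem (Theorem \ref{thm:thom-jet}), a generic $f$ is transverse to a Whitney stratification of $\overline{\Si}^2$, so $f$ misses all strata deeper than $\Si^2$ and $\overline{\Si}^2(f)=\Si^2(f)$ is a closed smooth submanifold of $\RP^{20}$ of dimension $20-18=2$, i.e.\ a closed surface.

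Next I would read off the Euler characteristic. Working in $H^*(\RP^{20};\F_2)=\F_2[x]/(x^{21})$, property \eqref{eq:su} gives
\[
\su(\overline{\Si}^2(f)\se\RP^{20})=[\overline{\Si}^2(f)]+\ldots+\chi_2(\overline{\Si}^2(f))\cdot x^{20},
\]
with the fundamental class term $[\overline{\Si}^2(f)]$ in degree $18$ and the intermediate terms in degrees $19,20$; in particular the coefficient of the top class $x^{20}$ is exactly the mod $2$ Euler characteristic of the surface. Proposition \ref{prop:20} asserts $\su(\overline{\Si}^2(f))=x^{20}$, so this coefficient equals $1$, i.e.\ $\chi(\Si^2(f))$ is odd. (The same identity shows that $[\overline{\Si}^2(f)]\in H^{18}(\RP^{20};\F_2)$ vanishes, consistent with Theorem \ref{k=t+1}: the Thom polynomial does not obstruct in this range, so the obstruction is genuinely the one coming from the top coefficient.)

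Finally, a closed surface with $\chi$ odd is nonempty, since the empty surface has $\chi=0$; and it is unorientable, since a closed orientable surface of genus $g$ has $\chi=2-2g$, which is even. This yields the two remaining assertions. I do not anticipate a real obstacle here: the corollary is essentially a dictionary translation of Proposition \ref{prop:20}. The only point demanding a little care is the transversality and dimension bookkeeping of the first paragraph, which is what guarantees that $\overline{\Si}^2(f)$ is genuinely a smooth closed surface, so that the top coefficient of $\su(\overline{\Si}^2(f))$ really is its $\chi_2$.
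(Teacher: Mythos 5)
Your argument is correct and is essentially the paper's own: the corollary is exactly the specialization of the proof of Theorem \ref{thm:euler-of-sigma2} (with $n=20$, $p=2$, $l=7$), where transversality plus the codimension count ($\codim\Si^2=18\le 20<\codim\Si^3=30$) gives a smooth closed surface $\overline{\Si}^2(f)=\Si^2(f)$, the top coefficient of $\su$ from \eqref{eq:su} gives $\chi_2=1$, and oddness of $\chi$ forces nonemptiness and unorientability. The only cosmetic slip is writing $\Hom(\C^{20},\C^{27})$ where the real space $\Hom(\R^{20},\R^{27})$ (or the real jet space) is meant; the codimension bookkeeping is unaffected.
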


\begin{proof}[Proof of Theorem \ref{thm:euler-of-sigma2}:]
	We generalize the proof from the simplest case discussed in Proposition \ref{prop:20}.
	First, the dimension of the $\Si^2$-locus is
	\[d=n-2(l+2)=n-2(n/2-2^{p-1}+1)=2^p-2.\]
	\textbf{The case of $n\neq 2^p$.} Since $n\neq 2^p$, $n>2^{p+1}$.
	 Therefore, for $i>3$ \[\deg \Phi^i\geq i(i+l)\geq 4(n/2-2^{p-1})>n.\] 
	  Therefore all $\Phi^i$ evaluate to zero for $i>3$, so by Theorem \ref{thm:sigmarsullivan},
	\[
	\ssu\big(\clos{\Si^2}(\RP^n\to \R^{n+l})\big)=\Phi_{n, n+l}^2=\sum_{l(\mu)\leq 2} \, \sum_{l(\nu)\leq 2} D_{\mu,\nu}^{2,l+2}s_{(l+2,l+2)+\mu, \nu^T},
	\]
	where $D_{\mu,\nu}^{s,t}$ is defined in \eqref{eq:Dmunu}.
	Using Proposition \ref{prop:avoiding_obstructions}, 
	we can reduce the proof to calculating the coefficients $D_{(p),(q-p)}^{2,l+2}$ 
	for $q=n-2(l+2)$ and $p=0,\dots,q$:
	\[ D_{(p),(q-p)}^{2,l+2}=\det\left(\begin{array}{cc}
		\binom{q+l+2}{p+1}&  \binom{p+l+1}{p+1}\\[3mm]
		\binom{q-p+l+1}{0}& \binom{l}{0}
	\end{array}\right).  \]
	Using Lucas's Theorem, \eqref{eqn:lucas} we obtain that $\binom{q+l+2}{p+1}\equiv 1$ and $\binom{p+l+1}{p+1}\equiv 0$, therefore $D_{(p),(q-p)}^{2,l+2}\equiv 1$. Since $q$ is even, and there are $q+1$ many such terms in
	\[\Phi^2|_{\bw}=\sum_{p=0}^{q} D_{(p),(q-p)}^{2,l+2}x^n,
	\]
	so  $\su(\overline{\Si}^2(f))=\ssu(\overline{\Si}^2(f))w(\RP^n)=x^n$. We also know that $\overline{\Si}^2(f)=\Si^2(f)$, therefore $\chi_2(\Si^2(f))=\int_{\PP^n}\su(\Si^2(f))=1$.

\textbf{The case of $n=2^p$.} In this case $l=-1$. Using the duality described in Remark \ref{rem:duality} we have $\Phi^i(-1)=\Phi^{i-1}(1)^\vee$. The contribution from $\Phi^i$ for $i>2$ is supported on $\Sigma^3(-1)$ and these classes evaluate to $0$ by Proposition \ref{prop:avoiding_obstructions}. It remains to count the non zero classes: they all have degree $n$ so the binomial coefficients in $\Phi^{2}(-1)=\Phi^{1}(1)^\vee$ are all of the form $\binom{n-1}{j}$ (see \eqref{eq:ssu1}), therefore equal to one, and according to Proposition \ref{prop:avoiding_obstructions} there are $n-1$ such terms, therefore the Segre-Stiefel-Whitney class evaluates to $x^n$. This again implies  that  the Stiefel-Whitney class evaluates to $x^n$, therefore for generic $f$ we have $\chi_2(\Si^2(f))=\int_{\PP^n}\su(\Si^2(f))=1$.

	For the final claim, notice that the dimension of the $\Si^2$-locus is $2^p-2$ which is of the form $4k+2$ for $p>1$ and the Euler characteristic of an oriented manifold of dimension $4k+2$ is even.
\end{proof}

\begin{remark}
	We deduce some geometric consequences of these results.
	We obtained that in the cases of Theorem \ref{thm:euler-of-sigma2}, that the mod 2 Euler characteristic $\chi(\Sigma^2(f))$ is non-zero. This implies that we calculated the unoriented bordism class of $\Sigma^2(f)$, whenever $\Sigma^2(f)$ is a surface, i.e.\ if $p=2$. Indeed, for a path-connected $X$ we have $N_2(X)\iso H_2(X;\Z_2)\oplus H_0(X;\Z_2)$, where the isomorphism is induced by the following: represent an element of $H_2(X;\Z_2)$ by an embedded smooth surface, and take its bordism class. Assuming $X$ is connected, the generator of $H_0(X;\Z_2)$ (if $X$ is connected), is mapped to the bordism class of $\P^2\to *$, i.e. $\P^2$ collapsed to a point of $X$. So the second component of the isomorphism is the mod 2 Euler class of our embedded smooth surface.
	
	Now we already showed that the fundamental class of $\Sigma^2(f)$ is zero, so we see that the information on the mod 2 Euler class determines the unoriented bordism class of $\Sigma^2(f)$.
\end{remark}

\section{Fold vs.\ Morin maps}\label{sec:fold_vs_morin}
In Section \ref{sec:Sigmai} we interpreted the obstructions to the existence of Morin maps as the Euler characteristic of the $\clos{\Si}^2$-points. In this section we give similar interpretations for the $\clos{A_2}$-locus. Contrary to the case of $\clos{\Si}^k$, there is currently no method available to compute the full Segre-SM or Segre-SW class of $\clos{A_2}(l)$. The difficulty is that we don't have a well understood stratification of the space of germs where $A_2$ is a stratum. 

On the other hand, the Stiefel-Whitney classes of the $A_k$ singularities can be calculated up to the avoiding ideal of $\Si^2$-points -- in particular, the formulas are valid for Morin maps. Based on these computations and further evidence, we formulate a conjectural relationship between cusp points and $\Si^2$-points, i.e.\ fold and Morin maps.
\subsection{A conjecture on fold and Morin maps}
The Segre-SW class of the $A_k$ singularities is described up to elements of the avoiding ideal of $\Si^2$-points by the following proposition:
\begin{proposition} \label{sw-ak4morin}
	The Segre-SW class of $\clos{A_k}(l)$ is equal to the following, up to elements of the avoiding ideal $\A_{\Si^2(l)}$:
	\[\ssu(\clos{A_k}(l))=\prod_{i=1}^{k}\prod_{j=1}^{l+1}\frac{b_j-ia}{1+b_j-ia},\]
	where $a,b_1\stb b_{l+1}$ denote the Stiefel-Whitney roots of $J^k(1,l+1)$.
\end{proposition}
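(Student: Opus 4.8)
\emph{Strategy.} The plan is to reduce this universal identity to a transversal‑intersection computation on the stratum of corank‑one jets, where $\clos{A_k}$ is smooth, absorbing everything that happens over higher‑corank jets into $\A_{\Si^2(l)}$. First I would complexify: by Theorem~\ref{thm:ssw}\,(\ref{item:bh}), $\ssu_{A_k(l)}$ is obtained from Ohmoto's SSM–Thom polynomial $\ssm_{A_k(l)}$ by reducing coefficients mod $2$, so it is enough to prove the complex analogue and then reduce. Second, by Definition~\ref{def:avoiding}, working ``modulo $\A_{\Si^2(l)}$'' means restricting to $H^*_G\bigl(V\setminus\clos{\Si^2}\bigr)$, where $V=J^k(n,n+l)$ and $G$ is the contact group; there every jet has corank at most one. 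Since $\csm,\ssm$ are motivic and $\clos{A_k}\subset\clos{\Si^1}$, one has $\ssm(\clos{A_k})=\ssm(\clos{A_k}\cap\Si^{1,0})+\ssm(\clos{A_k}\cap\clos{\Si^2})$ with the last term supported on $\clos{\Si^2}$, so only $B:=\clos{A_k}\cap\Si^{1,0}$ matters, where $\Si^{1,0}:=\clos{\Si^1}\setminus\clos{\Si^2}$ is the corank‑exactly‑one locus.

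\emph{Geometry over $\Si^{1,0}$.} On $\Si^{1,0}$ there are universal bundles $K=\ker df$ (a line) and $Q=\coker df$ (rank $l+1$), and a corank‑one jet lies in $\clos{A_k}$ exactly when its restriction to the kernel line vanishes to order $\ge k+1$, i.e.\ when the intrinsic higher derivatives $d^{i}f\colon\Sym^{i}K\to Q$ vanish for $i=2,\dots,k$. This is a \emph{linear} condition in the jet coordinates, so $B$ is the transversal zero locus inside $\Si^{1,0}$ of the tautological section of $\bigoplus_{i=2}^{k}\Hom(\Sym^{i}K,Q)$; moreover $\Si^{1,0}$ itself sits in $V$ with normal bundle $\Hom(K,Q)$. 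Hence $B$ is smooth and, in two stages, is cut out from $V$ by $\bigoplus_{i=1}^{k}\Hom(\Sym^{i}K,Q)=\bigoplus_{i=1}^{k}(K^{*})^{\otimes i}\otimes Q$, whose Chern roots are $\{b_{j}-ia\}$ with $a=c_{1}(K)$ and $b_{1},\dots,b_{l+1}$ the Chern roots of $Q$. Using that a transversal complete intersection $Z=s^{-1}(0)$ of a bundle with Chern roots $\{\rho\}$ has $\ssm(Z\subset{\cdot})=\prod_{\rho}\frac{\rho}{1+\rho}$ (and mod $2$ the same for $\ssu$, cf.\ \eqref{eq:ssu_su}), the two‑stage computation yields precisely $\prod_{i=1}^{k}\prod_{j=1}^{l+1}\frac{b_{j}-ia}{1+b_{j}-ia}$; and $a,b_{j}$ are identified with the Stiefel–Whitney roots of $J^{k}(1,l+1)$ through the relation $w(Q)=w(K)\,w(\nu_f)$ on $\Si^{1,0}$, i.e.\ $\prod_{j}(1+b_{j})=(1+a)\,\rho_{n,n+l}(1+w_{1}+w_{2}+\cdots)$, which is the defining relation of $\rho_{1,l+1}$.

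\emph{The obstacle.} The real work is passing from $\Si^{1,0}$ back to $V$: $\Si^{1,0}$ is a rank‑drop locus, not a complete intersection in $V\setminus\clos{\Si^2}$, so one cannot naively multiply the per‑stage Segre classes. I would use a Rim\'anyi‑type resolution: let $\widetilde Y=\{(\sigma,\ell):\ell\subseteq\ker d\sigma\}$, a vector bundle over $\PP(\C^n)$ that is proper and birational onto $\clos{\Si^1}$ and an isomorphism over $\Si^{1,0}$, and let $\widetilde B\subset\widetilde Y$ be the locus cut out by the intrinsic derivatives along $\ell$, an honest subbundle over $\pi^{-1}(\Si^{1,0})$. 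By naturality of $\csm$ under the proper map $\pi\colon\widetilde B\to V$ one gets $\csm_V(\pi_{*}\charfcn_{\widetilde B})=\pi_{*}\csm_{\widetilde B}(\charfcn_{\widetilde B})$, a projective‑bundle (localization) integral that evaluates to the product above, while $\pi_{*}\charfcn_{\widetilde B}$ agrees with $\charfcn_{\clos{A_k}}$ off $\clos{\Si^2}$, so the two $\csm$'s coincide modulo $\A_{\Si^2(l)}$ — exactly the claim. As a check one can also argue geometrically via Theorem~\ref{thm:ssw}\,(\ref{item:trans-pullback}): for a generic $f\colon M^{n}\to\R^{n+l}$ with $\Si^{2}(f)=\emptyset$ (automatic when $n<2(l+2)$, since $\codim\Si^2(l)=2(l+2)$), the locus $\clos{A_k}(f)$ is an honest transversal complete intersection in the submanifold $\Si^{1}(f)$, so $\ssu_{A_k(l)}(f)=\ssu(\clos{A_k}(f)\subset M)$ is the displayed product evaluated on $\nu_f$, which (using that $\ssu_{A_k(l)}$ is stable, Theorem~\ref{thm:ssw}\,(\ref{item:stable})) confirms the formula in low degrees. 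The main difficulty throughout is the Chern‑root/supersymmetry bookkeeping that links the staged normal bundle $\bigoplus_i(K^{*})^{\otimes i}\otimes Q$ with the roots of $J^{k}(1,l+1)$ and tracks exactly what survives in $\F_2[\www]/\A_{\Si^2(l)}$.
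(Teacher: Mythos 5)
Your reduction to the corank-one locus is sound as far as it goes: working modulo $\A_{\Si^2(l)}$ does allow you to discard everything supported on $\clos{\Si^2}$, and your description of $\clos{A_k}\cap\Si^{1,0}$ as cut out by the maps $\Sym^iK\to Q$, $i=2,\dots,k$, with total normal data $\bigoplus_{i=1}^k\Hom(\Sym^iK,Q)$ of roots $b_j-ia$, is correct. But the proof has a genuine gap exactly at the step you label ``the real work'': you assert that the pushforward $\pi_*\csm_{\widetilde B}$ along your kernel-line resolution ``evaluates to the product above,'' and this assertion \emph{is} the content of the proposition. Since $K$ and $Q$ exist only over $\Si^{1,0}$, the product $\prod_{i,j}\frac{b_j-ia}{1+b_j-ia}$ is not a class on $V\setminus\clos{\Si^2}$; identifying the Gysin image of $e(E)/c(E)$ from the corank-one locus with (a quotient-variable preimage of) that product, modulo the avoiding ideal, requires an actual computation which you neither carry out nor reduce to something known. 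Your closing ``check'' via generic maps with $\Si^2(f)=\emptyset$ only constrains degrees below $2(l+2)$, so it cannot close this gap. A second missing ingredient is the precise mechanism that makes ``up to $\A_{\Si^2(l)}$'' meaningful in quotient variables: you gesture at the substitution $\rho_{1,l+1}$, but never invoke the key fact that its kernel is exactly $\A_{\Si^2(l)}$ (Theorems \ref{thm:FultonPragacz} and \ref{thm:mod2avoiding-sigma-i}).

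The paper's proof sidesteps your localization problem entirely, and it also shows how to repair your argument: restrict the stable class to the source-dimension-one jet space $J^k(1,l+1)=J^k(1)\otimes\C^{l+1}$ (the inclusion $J^k(1,l+1)\to J^k(n,n+l)$ is Whitney transversal to $\clos{A_k}$, and there are no corank~$\geq2$ jets there). In that jet space $\clos{A_k}(1,l+1)=(x^{k+1})\otimes\C^{l+1}$ is an invariant \emph{linear} subspace, so its equivariant Segre class is immediately the product of $\rho/(1+\rho)$ over the normal weights $b_j-ia$ --- no resolution, no pushforward. Combined with the kernel identification above, this gives the statement modulo $\A_{\Si^2(l)}$, and the real case follows by Theorem \ref{thm:ssw}\,(\ref{item:bh}) together with Theorem \ref{thm:mod2avoiding-sigma-i}. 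If you replace your unproved pushforward evaluation by this restriction-to-$J^k(1,l+1)$ step, your write-up becomes essentially the paper's argument; as it stands, the central equality is assumed rather than proved.
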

\begin{proof} We prove the complex version. The real version will follow from Theorem \ref{thm:ssw} (\ref{item:bh}) and Theorem \ref{thm:mod2avoiding-sigma-i}.

	Consider the jet-space $J^k(1,l+1)=J^k(1)\otimes\C^{l+1}$. The closure of the $A_k(1,l+1)$-germs is a linear subspace:
	\[\clos{A_k}(1,l+1)=(x^{k+1})\otimes\C^{l+1}.\]
	The inclusion
	\[ J^k(1,l+1)\to J^k(n,l+n)\]
	is  Whitney transversal to $\clos{A_k}(n,l+n)$, therefore the SSM-class $s(\clos{A_k})$ for $\clos{A_{k}}\subset J^k(\infty,l+\infty)$ exists and can be written in quotient variables. (same arguments as in \cite{FeherRimanyi2012}). This implies that $s(\clos{A_{k}}(1,l+1))$ can be obtained from $s(\clos{A_{k}})$ by substitution of one source and $l+1$ target Chern roots. The kernel of this substitution is the avoiding ideal (in the ring of quotient Chern classes) of $\clos{\Sigma^2}(l)$ and elements of this avoiding ideal vanish for Morin maps.
\end{proof}
\begin{remark}
	To express this class in quotient Schur polynomials one can use the factorization theorem.
\end{remark}
Proposition \ref{sw-ak4morin} implies a further connection between the singularities $\clos{{A}_2}(l+1)$ and $\clos{\Sigma}^2(l)$:
\begin{theorem}\label{thm:sswa2(l+1)=sigma2lmod}
	The Segre-SW classes of $\clos{A}_2(l+1)$ and of $\clos{\Sigma}^2(l)$ are equal modulo the stable avoiding ideal of $\clos{\Sigma}^2(l+1)$.
\end{theorem}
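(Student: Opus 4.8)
The plan is to compare the two Segre-SW classes by applying Proposition~\ref{sw-ak4morin} to both $\clos{A}_2(l+1)$ and to $\clos{A}_1(l+1)=\clos{\Sigma}^1(l+1)$, and then to identify $\clos{A}_1(l+1)$ with $\clos{\Sigma}^2(l)$ via the duality of Remark~\ref{rem:duality}. More precisely, by Proposition~\ref{sw-ak4morin} with $k=2$, modulo $\A_{\Si^2(l+1)}$ we have
\[
\ssu(\clos{A}_2(l+1))=\prod_{j=1}^{l+2}\frac{b_j-a}{1+b_j-a}\cdot\prod_{j=1}^{l+2}\frac{b_j-2a}{1+b_j-2a},
\]
where $a,b_1\stb b_{l+2}$ are the SW-roots of $J^2(1,l+2)$; working mod $2$, the factor $b_j-2a$ is just $b_j$, so the second product contributes $\prod_j \frac{b_j}{1+b_j}$. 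On the other hand Proposition~\ref{sw-ak4morin} with $k=1$ (for $\clos{A}_1(l+1)$) gives, modulo $\A_{\Si^2(l+1)}$, exactly $\prod_{j=1}^{l+2}\frac{b_j-a}{1+b_j-a}$. So the first step is to verify that the extra factor $\prod_j\frac{b_j}{1+b_j}$ is itself trivial modulo the relevant avoiding ideal after the appropriate substitution of quotient variables; this is where I expect the main technical content to lie.

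The second step is the bookkeeping with relative codimensions and the $\Sigma$-duality. Recall $\tp(A_2(l+1))=\tp(\Sigma^2(l))=s_{l+2,l+2}$, and more structurally, Corollary~\ref{cor:a2-si2} gives $\A_{\clos{A_2}(l+1)}=\A_{\clos{\Si^2}(l)}$. The claim of Theorem~\ref{thm:sswa2(l+1)=sigma2lmod} is the finer statement that not just the ideals but the Segre-SW classes themselves agree modulo $\A_{\clos{\Si}^2(l+1)}$. So the natural route is: (i) express $\ssu(\clos{\Sigma}^2(l))$ in quotient variables using Theorem~\ref{thm:sigmarsullivan} and the Parusi\'nski-Pragacz formula (Theorem~\ref{thm:PP}), keeping only the $\Phi^2$-term since $\Phi^i$ for $i\geq 3$ is supported on $\clos{\Sigma}^3(l)\subset\clos{\Sigma}^2(l+1)$ hence lies in $\A_{\clos{\Sigma}^2(l+1)}$; (ii) express $\ssu(\clos{A}_2(l+1))$ via Proposition~\ref{sw-ak4morin} as above; and (iii) show these two expressions differ by an element of $\A_{\clos{\Sigma}^2(l+1)}$. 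Step~(i) already shows that $\ssu(\clos{\Sigma}^2(l))\equiv \Phi^2_{*,*}$, which is a single rational-function-type class in quotient variables; comparing it with the product formula from Proposition~\ref{sw-ak4morin} is then a matter of the factorization theorem (alluded to in the Remark after Proposition~\ref{sw-ak4morin}) rewriting both sides in quotient Schur polynomials.

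The hard part will be controlling the error terms: both Proposition~\ref{sw-ak4morin} and the $\Phi^i$-truncation introduce ambiguity \emph{a priori} only modulo $\A_{\Si^2(l)}$ (resp.\ $\A_{\Si^2(l)}$), not modulo the smaller ideal $\A_{\clos{\Sigma}^2(l+1)}$ that the theorem demands. So one cannot simply string the two congruences together; I would need to argue that the specific discrepancy between $\ssu(\clos{A}_2(l+1))$ and $\ssu(\clos{\Sigma}^2(l))$ — which is a concrete class, namely (the quotient-variable image of) $\Phi^2$ times the extra factor $\prod_j\frac{b_j}{1+b_j}$, minus $\Phi^2$ itself — actually lies in $\A_{\clos{\Sigma}^2(l+1)}$ and not merely in $\A_{\clos{\Sigma}^2(l)}$. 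Concretely I would examine the Schur-polynomial expansion of this discrepancy and show each term is divisible (in the sense of Theorem~\ref{thm:mod2avoiding-sigma-i}, i.e.\ contains the partition $(l+3,l+3)$) by the rectangle forcing membership in $\A_{\clos{\Sigma}^2(l+1)}$. An alternative, possibly cleaner, route is to prove the complex statement first — replacing $\ssu$ by $\ssm$ and $\F_2$ by $\Z$ — using Ohmoto's $\ssm$-Thom polynomial framework and the complex analogue of Proposition~\ref{sw-ak4morin}, then descend via Theorem~\ref{thm:ssw}\,(\ref{item:bh}) and Theorem~\ref{thm:mod2avoiding-sigma-i}; the proof of Proposition~\ref{sw-ak4morin} is already organized this way, so reusing that scaffolding is the most economical plan.
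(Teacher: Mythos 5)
Your opening step is not viable: $\clos{A}_1(l+1)=\clos{\Sigma}^1(l+1)$ is not $\clos{\Sigma}^2(l)$ and cannot be identified with it via Remark \ref{rem:duality} (that duality only trades $\Sigma^r(n,m)$ with $m>n$ for a transposed class; here the two loci have different codimensions, $l+2$ versus $2(l+2)$, and different Thom polynomials $s_{l+2}$ versus $s_{l+2,l+2}$). Accordingly the extra factor $\prod_j \frac{b_j}{1+b_j}$ is not ``trivial'' modulo any of the relevant ideals --- multiplying by it raises the lowest degree from $l+2$ to $2(l+2)$ --- so the comparison set up in your first paragraph cannot close. Your fallback plan is closer to the truth: the $\Phi^{s}$-terms with $s\geq 3$ in Theorem \ref{thm:sigmarsullivan} are combinations of Schur classes whose partitions contain the $(l+3)^3$ rectangle, hence lie in $\A_{\clos{\Sigma}^2(l+1)}$ by Theorem \ref{thm:mod2avoiding-sigma-i} (the justification is this rectangle containment, not a literal inclusion $\clos{\Sigma}^3(l)\subset\clos{\Sigma}^2(l+1)$, which does not typecheck), and Proposition \ref{sw-ak4morin} applied at relative codimension $l+1$ already controls the $A_2$-side modulo $\A_{\Sigma^2(l+1)}$, so your later worry that the ambiguities are only modulo $\A_{\Sigma^2(l)}$ is unfounded --- you state the correct ideal yourself in the first paragraph. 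But your step (iii), comparing the quotient-variable $\Phi^2$ with the product of Proposition \ref{sw-ak4morin} modulo $\A_{\clos{\Sigma}^2(l+1)}$, is exactly the content of the theorem, and you leave it as ``a matter of the factorization theorem'' without carrying it out; as written this is a genuine gap, not a proof.

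The paper closes this gap without touching the Parusi\'nski--Pragacz expansion: it writes the analogue of Proposition \ref{sw-ak4morin} for $\Sigma^2$, namely
\[ \ssu(\clos{\Sigma}^2(2,l+2))=\prod_{j=1}^{l+2}\frac{(b_j-a_1)(b_j-a_2)}{(1+b_j-a_1)(1+b_j-a_2)}, \]
which holds because $\clos{\Sigma}^2(2,l+2)$ is a linear subspace of the jet space; this determines the stable class $\ssu(\clos{\Sigma}^2(l))$ up to the kernel of the two-source-root substitution, i.e.\ up to $\A_{\clos{\Sigma}^3(l)}$, which sits inside $\A_{\clos{\Sigma}^2(l+1)}$. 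Specializing $a_1\mapsto a$, $a_2\mapsto 0$ yields exactly the $k=2$ product of Proposition \ref{sw-ak4morin} at relative codimension $l+1$ (your observation that $b_j-2a\equiv b_j$ mod $2$ is the same computation). Since the kernel of the one-source-root substitution is spanned by the $s_\lambda$ with $(l+3)^2\subseteq\lambda$, i.e.\ is precisely $\A_{\clos{\Sigma}^2(l+1)}$, both stable classes restrict to the same product under that substitution and therefore have equal Schur coefficients on every partition not containing the $2\times(l+3)$ rectangle, which is the assertion. If you insist on going through $\Phi^2$, you would have to reprove exactly this restriction statement (or do the Schur bookkeeping it replaces); the two-root product formula and its specialization $a_2\mapsto 0$ is the missing idea in your proposal.
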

\begin{proof}
	Similarly to Proposition \ref{sw-ak4morin} we have
	\[\ssu(\clos{\Sigma}^2(2,l+2))=\prod_{j=1}^{l+2}\frac{(b_j-a_1)(b_j-a_2)}{(1+b_j-a_1)(1+b_j-a_2)},\]
	where $a_1,a_2,b_1\stb b_{l+2}$ denote the Stiefel-Whitney roots of $J^k(2,l+2)$.
	This describes the stable class $\ssu(\clos{\Si}^2(l))$ up to the avoiding ideal $\A_{\clos{\Si}^3(l)}$. Setting $a_1\mapsto a$ and $a_2\mapsto 0$, we get the expression in Proposition \ref{sw-ak4morin} for $l+1$. This implies that the coefficients in the Schur expansion agree for all partitions $\la$  which do not contain the $2\times (l+3)$ rectangle (the partitions generating $\A_{\clos{\Si}^2(l+1)}\se \A_{\clos{\Si}^3(l)}$).
\end{proof}

We also found that the Segre-SW classes of $\clos{A_2}(l+1)$ and $\clos{\Si}^2(l)$ are equal in the range where we can compute Segre-SW classes of $\clos{A_2}(l+1)$ via the restriction equation method described below in Section \ref{subsec:restriction}.
(This is only a real phenomenon: in the complex case already the Thom polynomials of $A_2(l+1)$ and $\Si^2(l)$ are not equal.) For example, using calculations of Rich\'ard Rim\'anyi of Segre-SM Thom polynomials we have that the Segre-SW classes of $\clos{A_2}(l+1)$ and $\clos{\Si}^2(l)$ are equal up to degree 8 for $l=0$ and up to degree 12 for $l=1$. Based on this, we propose the following conjecture.
\begin{conjecture}\label{conj:sw-a2-si2} The Segre-SW classes of $\clos{A_2}(l+1)$ and $\clos{\Si}^2(l)$ are equal. \end{conjecture}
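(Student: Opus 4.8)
The plan is to upgrade Theorem~\ref{thm:sswa2(l+1)=sigma2lmod} from an equality modulo $\A_{\clos{\Si}^2(l+1)}$ to an exact equality. Write $\Delta_l:=\ssu(\clos{A_2}(l+1))-\ssu(\clos{\Si}^2(l))\in\F_2[[\www]]$. By Theorem~\ref{thm:sswa2(l+1)=sigma2lmod} and the generator description of Theorem~\ref{thm:mod2avoiding-sigma-i}, $\Delta_l$ lies in the $\F_2$-span of the Schur polynomials $s_\la$ with $(l+3)^2\se\la$; in particular it starts in cohomological degree $\ge 2l+6$, whereas the common Thom polynomial $\tp_{\clos{\Si}^2(l)}=s_{(l+2)^2}$ sits in degree $2l+4$. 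So the conjecture amounts to controlling $\ssu(\clos{A_2}(l+1))$ precisely on those partitions that contain $(l+3)^2$ --- exactly the range where Proposition~\ref{sw-ak4morin} gives no information and, by Corollary~\ref{cor:a2-si2}, exactly the classes supported on the $\clos{\Si}^2(l+1)$-locus.

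The most promising route is the restriction-equation characterization of Segre-SW classes (Section~\ref{subsec:restriction}): $\ssu(\clos\eta)$ is pinned down, in each bounded degree range, by its restrictions to the maximal compact subgroups of the stabilizers $\stab(\xi)$ of the singularity strata $\xi$ entering $\clos\eta$, these restrictions being governed by the local geometry of $\clos\eta$ along $\xi$. For $\clos{\Si}^2(l)$ these data, and the resulting class, are delivered by Parusi\'nski--Pragacz (Theorem~\ref{thm:PP}). For $\clos{A_2}(l+1)$ the relevant strata are $\Si^0$, $A_1=\Si^1$, $A_2$, and the corank loci $\Si^s$ with $s\ge 2$; by the previous paragraph it suffices to match the $\Si^s$-restrictions for $s\ge 2$ of the two classes. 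The plan is therefore to prove the local claim that near a generic point of each $\Si^s$ the germ of $\clos{A_2}(l+1)$ and the germ of $\clos{\Si}^2(l)$ carry the same constructible function, hence the same local Segre-SW class. One cannot expect the germs to be biholomorphic --- the complex Thom polynomials already differ --- so this comparison must be genuinely mod $2$; concretely, one would cut down by a transversal to $\Si^s$ and analyse the resulting cones via a normal form for the cusp unfolding in relative codimension $l+1$.

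An alternative is induction on $l$. The base cases $l=0$ and $l=1$ hold in the degree range that is computationally accessible through the restriction-equation method, as already observed. For the inductive step, $\clos{A_2}(l+1)$ is the zero-unfolding of $\clos{A_2}(l)$ and $\clos{\Si}^2(l)$ is the zero-unfolding of $\clos{\Si}^2(l-1)$; Theorem~\ref{thm:lowering} and Remark~\ref{flat0} describe how the avoiding ideals propagate, and one would want the analogue at the level of the \emph{classes}: a single explicit operation on quotient-variable power series sending $\ssu(\clos{A_2}(l))\mapsto\ssu(\clos{A_2}(l+1))$ and $\ssu(\clos{\Si}^2(l-1))\mapsto\ssu(\clos{\Si}^2(l))$. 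The second compatibility is checkable directly against Theorem~\ref{thm:PP}; establishing the first would again require understanding the zero-unfolding behaviour of the SSM/SSW class of the cusp.

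The main obstacle in both routes is the same, and is the one flagged at the start of Section~\ref{sec:fold_vs_morin}: we lack a usable stratification --- or generating-function model --- for $\clos{A_2}(l)$ in arbitrary relative codimension, so the currently available evidence is purely computational. The cleanest resolution would be a direct proof that the local Segre-SW class of $\clos{A_2}(l+1)$ at a generic point of each $\Si^s$, $s\ge 2$, agrees with that of $\clos{\Si}^2(l)$; combined with Theorem~\ref{thm:sswa2(l+1)=sigma2lmod} and the uniqueness in the restriction-equation method, this would force $\Delta_l=0$ and prove the conjecture.
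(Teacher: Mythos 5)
The statement you are addressing is stated in the paper only as a conjecture, and the paper does not prove it: the authors establish the weaker Theorem \ref{thm:sswa2(l+1)=sigma2lmod} (equality modulo the stable avoiding ideal $\A_{\clos{\Si}^2(l+1)}$), record computational agreement in low degrees, and explicitly note at the start of Section \ref{sec:fold_vs_morin} that no method is currently available to compute the full Segre-SW class of $\clos{A_2}(l)$. Your text is accordingly a research plan rather than a proof, and you say as much yourself. Your reduction of the conjecture to controlling the coefficients of the Schur polynomials $s_\la$ with $(l+3)^2\se\la$ is correct and matches the paper's own framing, but neither of your two routes closes that gap.

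Concretely, the missing step is exactly the one you flag at the end: you never establish the local claim that $\clos{A_2}(l+1)$ and $\clos{\Si}^2(l)$ carry the same mod 2 local Segre-SW data along a generic point of $\Si^s$ for $s\ge 2$, nor the zero-unfolding compatibility sending $\ssu(\clos{A_2}(l))$ to $\ssu(\clos{A_2}(l+1))$ needed for the inductive route. Moreover, the uniqueness you want to borrow from the restriction-equation method is not actually available here: as the paper remarks in Section \ref{subsec:restriction}, the restriction equations are typically not solvable with mod 2 coefficients, so in practice they are solved over $\Z$ for the complexification and reduced via Theorem \ref{thm:ssw} (\ref{item:bh}) --- which cannot give the desired equality, since $\ssm_{A_2(l+1)}$ and $\ssm_{\Si^2(l)}$ already differ at the level of Thom polynomials --- and even the complex computation only pins the class down in a bounded degree range before moduli of contact orbits appear (degree 8 for $l=0$, as noted in Appendix \ref{app:charseries}). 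So your proposal correctly identifies what would have to be proven, but does not prove it; the conjecture remains open both in the paper and in your write-up.
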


\begin{remark}\label{rem:a2-si2} 
	Recall that in Corollary \ref{cor:a2-si2} we showed that the stable avoiding ideals of $\clos{A_2}(l+1)$ and $\clos{\Si}^2(l)$ are equal. This means that if there is an obstruction $\xi\in \A_{A_2(l+1)}$, which does not vanish for some $f:M\to N\times \R$, ---implying that $\clos{A_2}(f)$ is nonempty--- then the same obstruction shows that $\clos{\Si}^2(\pi\circ f:M\to N)$ is nonempty. It seems that there is a much stronger relationship:
	We conjecture that these are cobordant classes under an appropriate resolution.\end{remark}

We can formulate obstruction theoretic versions, for example:
\begin{conjecture}\label{conj:fold_Morin} Let $f:M\to N$ be a Morin map. Then there is a lift $\tilde{f}:M\to N\times\R$ of $f$, which is a fold map. \end{conjecture}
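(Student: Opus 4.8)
We have no proof; here is the approach we expect to work. Write $\pi\colon N\times\R\to N$ for the projection, so that any lift of $f$ has the form $(f,g)$ with $g\in C^\infty(M,\R)$; the problem is to choose $g$ so that $(f,g)\colon M\to N\times\R$ is a fold map. Since $f$ is Morin it is locally stable, so near each point it is given by the Morin normal form, and the global singular set $\Si^1(f)$ carries the kernel line bundle $K=\ker df|_{\Si^1(f)}$.

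\emph{A local model.} Near a point of the $A_k$-stratum ($k\ge 2$) the Morin normal form singles out a ``kernel coordinate'' $x$ with $\partial_x$ spanning $\ker df$ along $\Si^1(f)$. For $g=x^2/2$ one checks directly that the singular set of $(f,g)$ is the smooth submanifold $\Si^1(f)\cap\{x=0\}$, of the codimension expected of a fold map into $N\times\R$; that along it $d(f,g)$ has corank one with kernel $\partial_x$; and that $d^2(f,g)(\partial_x,\partial_x)$ is, modulo the image of $d(f,g)$, a non-zero multiple of the new target direction. Hence $(f,g)$ is a fold map on a whole neighbourhood of the $A_k$-point. Near a point of the fold stratum $\Si^{1,0}(f)$ the same computation shows $(f,g)$ is a fold map (or an immersion) for \emph{every} $g$, because the second-order behaviour of $f$ itself already supplies the needed term, and near a regular point $(f,g)$ is an immersion for every $g$.

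\emph{Globalisation.} A short computation of $d^2(f,g)$ shows that $(f,g)$ is a fold map at a point $q$ of its singular set $\Si^1(f)\cap\{dg|_K=0\}$ as soon as $dg|_K$ vanishes transversally there, $(f,g)$ restricted to the singular set is an immersion, and --- at points of the $A_{\ge 2}$-locus $\clos{\Si^{1,1}}(f)$, where the first alternative above fails --- a single second-order nondegeneracy condition on $g$ in the $K$-direction holds; all the remaining requirements, and everything away from $\clos{\Si^{1,1}}(f)$, are satisfied by a generic $g$. So one must produce a $g$ whose behaviour in the $K$-direction is nondegenerate all along $\clos{\Si^{1,1}}(f)$. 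Here one exploits that $x\mapsto x^2/2$, and its positive multiples, are insensitive to the sign of $x$, so the local models glue even when $K$ is non-orientable: over $\clos{\Si^{1,1}}(f)$ the line $K$ sits inside the normal directions of a tubular neighbourhood, and taking $g$ to be a cut-off times one half of the squared length of the $K$-component of the normal displacement --- assembled from the local models by a partition of unity subordinate to Morin charts --- gives, by the previous step, a $g$ for which $(f,g)$ is a fold map near $\clos{\Si^{1,1}}(f)$; extending $g$ generically over the rest of $M$, where $f$ has only fold and regular points, completes the construction.

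\emph{The main obstacle.} We expect the difficulty to lie entirely in the globalisation step, namely in verifying that the second-order nondegeneracy can be arranged simultaneously along the whole nested stratification $\clos{\Si^{1,1}}(f)\supset\clos{\Si^{1,1,1}}(f)\supset\cdots$, i.e.\ that the potential obstruction to doing so vanishes and that $\clos{\Si^{1,1}}(f)$ and the extension of $K$ behave as the normal forms suggest. Recall that a Morin map is a projection of an \emph{immersion} (a prim map) only when $w_1(K)=0$, since an immersive lift would require $dg|_K$ to be a nowhere-zero section of $K^*$ over $\Si^1(f)$; the content of the conjecture is that the weaker fold lift, which is allowed to be singular along the $A_{\ge 2}$-locus, carries no such obstruction. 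This is consistent with Corollary \ref{cor:a2-si2}, Theorem \ref{thm:sswa2(l+1)=sigma2lmod} and Conjecture \ref{conj:sw-a2-si2}, which say that the universal invariants of $\clos{A_2}(l+1)$ and $\clos{\Si}^2(l)$ coincide, exactly as they must if every Morin map admits a fold lift. An alternative route would be a relative, parametric h-principle for fold maps applied to the $f$-dependent differential relation on $g$, whose formal solution is easily seen to be unobstructed.
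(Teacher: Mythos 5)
The statement you were asked about is Conjecture \ref{conj:fold_Morin}: the paper gives no proof of it, offering only indirect evidence (the equality of stable avoiding ideals in Corollary \ref{cor:a2-si2}, the partial equality of Segre-SW classes in Theorem \ref{thm:sswa2(l+1)=sigma2lmod} and Conjecture \ref{conj:sw-a2-si2}, and the analogous statement for cusp maps proved in \cite{CST}). So there is no proof in the paper to compare with, and your submission, by your own admission, is a programme rather than a proof; as it stands the statement remains open.

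Concretely, two gaps. First, your opening assumption is not available: with the paper's definition a Morin map is merely a map with no $\Si^2$ points, i.e.\ of corank $\le 1$ everywhere, and such a map need not be locally stable — its germs can be non-finitely determined, and $\Si^1(f)$, $\clos{A_2}(f)$ and the deeper loci need not be submanifolds — so there is in general no Morin normal form, no kernel coordinate $x$, and no nested stratification to glue along. Second, even for locally stable $f$, the local model is the easy part (your computation is correct: at an $A_k$ point the lift $(f,x^2/2)$ has local algebra $\R[x]/(x^2)$, hence contact class $A_1$, and at fold points of $f$ any $g$ yields class $A_0$ or $A_1$), and the entire content is the globalisation you yourself flag: one must produce a single $g$ whose class in the local algebra $\R[x]/(x^{j+1})$ of $f$ has order $1$ or $2$ at \emph{every} $A_{\ge 2}$ point. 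Your partition-of-unity assembly does not establish this: the kernel coordinate is chart dependent (only the line bundle $K$ is intrinsic), the terms $g_i\,d\rho_i$ and the cut-off can cancel the linear and quadratic parts exactly on chart overlaps or where the cut-off varies, and the ``squared length of the $K$-component of the normal displacement'' presupposes a tubular neighbourhood of $\clos{\Si^{1,1}}(f)$ and a global extension and splitting of $K$ that are not constructed (and that are exactly where a genuine obstruction, if any, would live — compare your own $w_1(K)$ remark for prim maps). The closing appeal to an h-principle is only a pointer: the differential relation is imposed on $g$ with $f$ fixed, which is not covered by the standard h-principles for fold maps. So the reduction to a jet condition on $g$ along $K$ over the $A_{\ge 2}$ locus is a sensible plan of attack, but the conjecture is not proved by it.
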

A similar statement for cusp maps was proved in \cite{CST}. 
\subsection{Euler characteristics of the $A_2$-locus}\label{subsec:A2eulerchar}
We make some observations regarding the Euler characteristic of the $\clos{A}_2$-locus of maps of $\RP^n$ for the case a) described in Theorem \ref{k=t+1}.

Saeki and Sakuma \cite[Theorem 4.1(1)]{SaekiSakuma1999} proved that the Euler characteristic of the $\clos{A_2}$-locus of a Morin map $f:M^4\to \R^4$ is equal to the Euler characteristic of $M$:\begin{equation}\label{eq:A2eulerchar}
	\chi(\clos{A_2}(f))=\chi(M)=\int_M w_4(M).
\end{equation}  
For instance, this implies that if $f:\RP^4\to \R^4$ is a Morin map, then the Euler characteristic of the cusp-locus $\chi(\clos{A_2}(f))$ is odd. We can generalize this example using Theorem \ref{thm:sswa2(l+1)=sigma2lmod} and Theorem \ref{thm:euler-of-sigma2}:

\begin{theorem}\label{thm:A2eulerchar}
	Assume that the binary expansion of $n$ contains no consecutive 1's. Let $p$ be the largest number, such that $2^p$ divides $n$, and assume that $p\geq 1$. Then for a generic smooth, Morin map $f:\RP^n\to \R^{n+l+1}$, where $l=n/2-2^{p-1}-1$, the locus $\clos{A_2}(f)$ is a smooth manifold of dimension $d=2^{p}-2$ of odd Euler characteristic. In particular it is nonempty and for $p\geq 2$, unorientable.
\end{theorem}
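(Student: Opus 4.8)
The plan is to transfer the Euler characteristic computation for $\Si^2$-loci (Theorem~\ref{thm:euler-of-sigma2}) to the $\clos{A_2}$-locus of a Morin map, using the relationship between the Segre-SW classes of $\clos{A_2}(l+1)$ and $\clos{\Si}^2(l)$ established in Theorem~\ref{thm:sswa2(l+1)=sigma2lmod}. First I would set up the dictionary: a Morin map $f:\RP^n\to\R^{n+l+1}$ of relative codimension $l+1$ has virtual normal bundle with $w(f)=\bw(\RP^n)=1/w(\RP^n)$, exactly as in the $\Si^2$ case, because the source and the relevant Stiefel-Whitney data depend only on $\RP^n$, not on the precise target dimension beyond the value of $l$. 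The $\clos{A_2}$-locus of a generic Morin map has expected dimension $d=n-2(l+2)=2^p-2$ by the Thom polynomial $\tp_{A_2(l+1)}=s_{l+2,l+2}$, matching the dimension count for $\Si^2(l)$. By Theorem~\ref{thm:ssw}\eqref{item:trans-pullback}, for generic $f$ the Stiefel-Whitney class $\su(\clos{A_2}(f))$ is obtained by evaluating $\ssu(\clos{A_2}(l+1))$ on $\bw(\RP^n)$ and multiplying by $w(\RP^n)$, so $\chi_2(\clos{A_2}(f))=\int_{\RP^n}\su(\clos{A_2}(f))$ is the coefficient of $x^n$ in $\ssu(\clos{A_2}(l+1))\big|_{\bw}\cdot w(\RP^n)$.

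The key step is then to show that this evaluation gives $x^n$, just as in the $\Si^2$ case. By Theorem~\ref{thm:sswa2(l+1)=sigma2lmod}, the Segre-SW classes of $\clos{A_2}(l+1)$ and $\clos{\Si}^2(l)$ agree modulo the stable avoiding ideal $\A_{\clos{\Si}^2(l+1)}$. So it suffices to check that the elements of $\A_{\clos{\Si}^2(l+1)}$ by which the two classes may differ all evaluate to zero on $\bw(\RP^n)$ in degree $n$. But Proposition~\ref{prop:avoiding_obstructions} (together with Corollary~\ref{cor:a2-si2}, which identifies $\A_{\clos{\Si}^2(l+1)}$ with $\A_{\clos{A_2}(l+2)}$) precisely classifies which Schur-polynomial generators of $\A_{\Si^2(l)}$ evaluate non-trivially; the partitions containing the $2\times(l+3)$ rectangle — i.e.\ the generators of $\A_{\clos{\Si}^2(l+1)}$ — are strictly larger than those of the form \eqref{eq:lambda}, hence lie outside the list of non-vanishing obstructions and evaluate to $0$. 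Therefore $\ssu(\clos{A_2}(l+1))\big|_{\bw}=\ssu(\clos{\Si}^2(l))\big|_{\bw}=x^n$ by the computation inside the proof of Theorem~\ref{thm:euler-of-sigma2}, and multiplying by $w(\RP^n)$ leaves $x^n$ unchanged in top degree. This yields $\chi_2(\clos{A_2}(f))=1$, so the Euler characteristic is odd; in particular $\clos{A_2}(f)$ is non-empty, and since $\tp_{A_2(l+1)}(\bw)=0$ (as $\tp_{A_2(l+1)}=\tp_{\Si^2(l)}$ vanishes in case~a) by Theorem~\ref{k=t+1}), the locus is null-homologous, and for generic $f$ it is a smooth manifold of dimension $d=2^p-2$.

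The final orientability claim for $p\geq2$ follows exactly as in Theorem~\ref{thm:euler-of-sigma2}: $d=2^p-2\equiv 2\pmod 4$ when $p\geq 2$, and a closed orientable manifold of dimension $4k+2$ has even Euler characteristic (by the symplectic intersection form on middle cohomology, as recalled in the preliminaries), contradicting the oddness just established.

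I expect the main obstacle to be a bookkeeping subtlety rather than a conceptual one: verifying carefully that the degree-$n$ part of the ``error term'' in Theorem~\ref{thm:sswa2(l+1)=sigma2lmod} really is spanned only by Schur polynomials indexed by partitions containing the $2\times(l+3)$ rectangle (and not by something smaller that could survive evaluation), and confirming that the smoothness and dimension of $\clos{A_2}(f)$ for generic Morin $f$ are legitimate — i.e.\ that the Thom jet transversality theorem (Theorem~\ref{thm:thom-jet}) applies within the class of Morin maps, so that $\clos{A_2}(f)=A_2(f)$ is a closed submanifold of the expected dimension. The contact-geometry input here is that for generic $f$ avoiding $\Si^2$, the stratum $A_2$ is the top stratum of $\clos{A_2}$ and its boundary strata (which would be $A_{\geq 3}$ and $\Si^{\geq 2}$ strata) are either empty or of lower dimension, so that $\clos{A_2}(f)$ is genuinely a smooth closed manifold; this is where one must be slightly careful, but it is standard.
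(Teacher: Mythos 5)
Your overall route is the paper's: combine Theorem \ref{thm:sswa2(l+1)=sigma2lmod} with the evaluation carried out in the proof of Theorem \ref{thm:euler-of-sigma2}, and use Theorem \ref{thm:ssw} (\ref{item:trans-pullback}) to identify $\ssu_{\clos{A_2}(l+1)}(f)$ with $\ssu(\clos{A_2}(f)\subset \RP^n)$ for a jet-transversal Morin map. The one genuinely different ingredient is how you dispose of the discrepancy term lying in $\A_{\clos{\Si}^2(l+1)}$: the paper's intended argument kills it using the Morin hypothesis itself (elements of $\A_{\clos{\Si}^2(l+1)}$ are supported on the $\clos{\Si}^2$-locus, which is empty for a Morin map, cf.\ Proposition \ref{prop:avoiding_is_obstruction}), whereas you evaluate it directly: every $s_\la$ with $(l+3)^2\se\la$ has $\la_2\geq l+3$, hence is not of the form \eqref{eq:lambda}, hence vanishes on $\bw(\RP^n)$ by Proposition \ref{prop:avoiding_obstructions}. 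That check is correct (also in the $n=2^p$, $l=-1$ case), and it buys something the paper only claims under Conjecture \ref{conj:sw-a2-si2}: the odd Euler characteristic of $\clos{A_2}(f)$ then holds for \emph{any} generic $f:\RP^n\to\R^{n+l+1}$ in case a), the Morin hypothesis being needed only for smoothness. (The appeal to Corollary \ref{cor:a2-si2} in that step is not needed.)

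Two inaccuracies, neither fatal. First, $\tp_{A_2(l+1)}(\bw)=\tp_{\Si^2(l)}(\bw)$ does not vanish in all of case a): by Theorem \ref{k=t+1} (see the $p=1$ discussion in its proof) it equals $x^n$ when $p=1$, so your ``null-homologous'' aside is false for $p=1$; nothing in the theorem depends on it, and smoothness certainly does not follow from it. Second, the smoothness discussion is off: $\clos{A_2}(f)=A_2(f)$ is generally false (higher $A_k$-points occur in nonnegative dimension whenever $n-3(l+2)\geq 0$, e.g.\ $A_3$-curves for Morin maps $\RP^4\to\R^4$), and ``boundary strata of lower dimension'' does not by itself make a closure smooth. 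The statement actually needed is that the singular set of the closed contact class $\clos{A_2}(l+1)$ is contained in the $\Si^2$-type classes (cf.\ Example \ref{ex:DoldWu}, where the singular points of the $\clos{A_2}(1)$-locus are the $\operatorname{III}_{2,2}(1)$-points); since a Morin map avoids these, jet transversality (Theorem \ref{thm:thom-jet}, applied inside the open set of Morin maps) makes $\clos{A_2}(f)=\bigcup_{k\geq 2}A_k(f)$ a closed smooth submanifold of codimension $2(l+2)$, i.e.\ of dimension $2^p-2$. With that correction your argument, including the $4k+2$ orientability step, goes through.
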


We remark that in Theorem \ref{thm:A2eulerchar}, the existence of a Morin map has to be decided on a case-by-case basis. The theorem holds without the Morin condition in the range where we could compute the Segre-SW class (up to degree 9, so for $\RP^4$ and $\RP^8$). For instance:
\begin{example}
	Any smooth 4-manifold $M$ with non-zero signature does not have a Morin map $f:M\to \R^4$, since its integer Thom polynomial (e.g.\ \cite{Ronga}) is nonzero: \[\int_M[\Si^2(f)]=\int_M p_1(M)=3\si(M).\] On the other hand, Saeki and Sakuma's theorem \eqref{eq:A2eulerchar} also holds for smooth generic maps $f$ (by \eqref{eq:A2closedM4R4} below), even though the $\clos{A_2}$-locus is no longer smooth. So for a smooth, generic ---not necessarily Morin--- map $f:\RP^4\to \R^4$, $\chi(\clos{A_2}(f))$ is odd.
	
	In fact, it turns out that the the Thom polynomial of $A_2$ and $\Si^2$-points ($w_2$ and $p_1$), and the Euler characteristic of the $A_2$-points ($w_4$) are the only obstructions for having a fold map $f:M^4\to\R^4$, see \cite[Corollary 3.4]{SadykovSaekiSakuma2010} for the precise statement. 
\end{example}
If Conjecture \ref{conj:sw-a2-si2} holds, the Morin condition in Theorem \ref{thm:A2eulerchar} can be dropped altogether. Then the $\clos{A_2}$-locus will not necessarily be smooth, but the conclusion about odd Euler characteristic still holds.

\section{Calculating Stiefel-Whitney classes and Euler characteristics of other singularity loci}\label{sec:singularity}
In this section we compute the Stiefel-Whitney classes of singularity loci other than the $\Si^r$-classes and $A_r$-classes. For the relevant notions from singularity theory and for a list of prototypes of singularities we refer to \cite{AVGL}, \cite{tpp} and \cite{primer}. 
\subsection{Restriction equations}\label{subsec:restriction}
The method of restriction equations was developed in \cite{Rimanyi2001}. Using the \quot{smallness} axiom (III) of \cite[Theorem 2.7]{FeherRimanyi2018} Rim\'anyi expanded his restriction equation method to calculate CSM and SSM classes of singularities. His paper \cite{csmtp} is in preparation, however the results are already available on \cite{tpp}.
Hoping that his paper will be published soon, we only demonstrate the method on a small example: we calculate $s(A_2):=\ssm(A_2)$ for $l=0$ up to degree 3 using restriction equations. This was first calculated by Ohmoto in \cite[4.3]{Ohmoto2016} by a slightly different method.

$s(A_2)$ is a formal series: $s(A_2)=s_2(A_2)+s_3(A_2)+\cdots$, where $s_2(A_2)$ is the Thom polynomial of $A_2$. It is well known that $[A_2]=s_2(A_2)=c_1^2+c_2$.

Let us write the unknown $ s_3(A_2)$ in the general form $Ac_1^3+Bc_1 c_2+Cc_3$. In \cite{Rimanyi2001} we find that the restriction homomorphisms are given by
\[c|_{A_i}=1+ia-ia^2+ia^3-\cdots.\]
Since $ s_3(A_2)$ is supported on $A_2$, we have
\[  (Ac_1^3+Bc_1 c_2+Cc_3)|_{A_1}=0,\]
which implies that
\begin{equation}\label{eq:1}
	A-B+C=0.
\end{equation}
Recall from \cite{Rimanyi2001} that the source weights are $\mathcal{W}(S_{A_2})=\{a,2a\}$ and $\mathcal{W}(S_{A_3})=\{a,2a,3a\}$. Therefore the restriction to $A_2$ is
\[  \left.\frac{a\cdot2a}{(1+a)(1+2a)}\right|_3=a\cdot2a(1-3a+\cdots)|_3=-6a^3,\]
implying that
\begin{equation}\label{eq:2}
	8A-4B+2C=-6.
\end{equation}

To find a third equation we also restrict to $A_3$. The smallness axiom implies that $\csm_3(A_2)|_{A_3}=0$, therefore
\[ \csm(A_2)|_{A_3}=[A_2]|_{A_3}=(c_1^2+c_2)|_{A_3}=9a^2-3a^2=6a^2,\]
therefore
\[ s_3(A_2)|_{A_3}=\left.\frac{\csm(A_2\subset S_{A_3})}{c(S_{A_3})}\right|_3=\left.\frac{6a^2}{(1+a)(1+2a)(1+3a)}\right|_3=-36a^3.\]
This implies that
\begin{equation}\label{eq:3}
	27A-9B+3C=-36.
\end{equation}
Solving the system of equations \eqref{eq:1}, \eqref{eq:2}, \eqref{eq:3}, we obtain the $\ssm$ class of $A_2$ up to degree 3:
\[
\ssm(A_2)=(c_1^2+c_2)-(3c_1^3+6c_1c_2+3c_3)+\ldots.
\]
\begin{remark}
	Notice that with mod 2 coefficients the method of restriction equations is typically not solvable, e.g.\ \eqref{eq:2} gives the equation $0=0$ modulo 2. However solving the restriction equations for complex contact singularities and applying the Borel-Haefliger type Theorem \ref{thm:ssw}, \eqref{item:bh} we get the corresponding real result.
\end{remark}
\subsection{Stiefel-Whitney classes of singularities}\label{sec:swclasses}

\begin{definition}
  Let $\eta$ be a contact singularity of relative codimension $l$. Then the formal power series
\begin{equation}\label{eq:sweta}
\su_\eta:=\ssu_{\eta}\cdot(1+t_1+t_2+\cdots)\in\F_2[[\www]][[\ttt]]
\end{equation}
is called the \emph{Stiefel-Whitney class of $\eta$}.
\end{definition}
The universal property Theorem \ref{thm:ssw} and the transversality result Theorem \ref{thm:generic-pullback} implies that for a generic map $f:M\to N$ and a singularity $\eta$ we can express the Stiefel-Whitney class of the $\eta$-locus of $f$ in terms of the normal Stiefel-Whitney classes $w(f)$ and the Stiefel-Whitney classes of the source $w(M)$:
\begin{proposition}\label{prop:sw}
	Let $\eta$ be a contact singularity of relative codimension $l$. Then  $\su_{\eta}$
has the universal property that for any generic map $f:M^n\to N^{n+l}$ with $M$ compact of codimension $l$
\begin{equation}\label{eq:su_eta_f}
	 \su(\eta(f))=\su_\eta\big(w_i\mapsto w_i(f), t_i\mapsto w_i(M)\big),
\end{equation}
for $i\leq n$, i.e.\ $w_i\mapsto 0$ and $t_i\mapsto 0$ for $i>n$.
\end{proposition}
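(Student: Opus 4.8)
The plan is to unwind the definition \eqref{eq:sweta} of $\su_\eta$ and combine it with the transversal pullback property of Segre-Stiefel-Whitney classes (Theorem \ref{thm:ssw}, item \eqref{item:trans-pullback}) together with the relation \eqref{eq:ssu_su} between $\su$ and $\ssu$. First I would recall that by Theorem \ref{thm:ssw} \eqref{item:trans-pullback}, for a generic smooth map $f\colon M\to N$ of real algebraic manifolds one has $\ssu_\eta(f)=\ssu(\eta(f)\subset M)$, where $\ssu_\eta(f)$ means the substitution $w_i\mapsto w_i(f^*TN\ominus TM)=w_i(f)$ into the power series $\ssu_\eta\in\F_2[[\www]]$. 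Since $M$ is compact, by the Nash--Tognoli theorem (cited in the remark after Theorem \ref{thm:ssw}) we may assume $M$ is a real algebraic variety, so the algebraicity hypothesis is harmless.

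The second step is to apply \eqref{eq:ssu_su} in the form $\su(Z\subset M)=\ssu(Z\subset M)\cdot w(TM)$, valid for $Z=\eta(f)$ as a stratified submanifold (or constructible set) of the smooth variety $M$. This gives
\[
\su(\eta(f))=\ssu(\eta(f)\subset M)\cdot w(M)=\ssu_\eta(f)\cdot w(M).
\]
Now by the definition \eqref{eq:sweta}, $\su_\eta=\ssu_\eta\cdot(1+t_1+t_2+\cdots)$, so performing the substitution $w_i\mapsto w_i(f)$ and $t_i\mapsto w_i(M)$ in $\su_\eta$ yields exactly $\ssu_\eta(f)\cdot w(M)$, where one uses $w(M)=1+w_1(M)+w_2(M)+\cdots$. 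The truncation claim, that one sets $w_i\mapsto 0$ and $t_i\mapsto 0$ for $i>n$, is automatic since $H^j(M;\F_2)=0$ for $j>n=\dim M$; equivalently, the power series $\su_\eta$ is evaluated in the finite-dimensional ring $H^*(M;\F_2)$. Comparing the two expressions proves \eqref{eq:su_eta_f}.

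The main obstacle is ensuring that the two substitution recipes genuinely agree, i.e.\ that the formal variables $t_i$ in $\su_\eta$ really do specialize to $w_i(TM)$ and not, say, to Stiefel-Whitney classes of the virtual normal bundle. This is a bookkeeping point: in $\ssu_\eta$ the variables $\www$ encode $f^*TN\ominus TM$ via $\rho_{n,n+l}$ (cf.\ \eqref{eq:rhonp}), so that $\ssu_\eta(f)\cdot w(TM)$ recombines into a class that depends on $f^*TN$ and $TM$ only through $w(f)$ and $w(M)$; one must check that the factor $1+t_1+\cdots$ in \eqref{eq:sweta} is precisely the one that supplies the missing $w(TM)$ demanded by \eqref{eq:ssu_su}. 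This is exactly how $\su_\eta$ was defined, so the verification is routine once the variable conventions are written out carefully, but it is the step where sign-of-bundle errors would creep in. No genuinely hard analysis is required beyond invoking Theorem \ref{thm:ssw} \eqref{item:trans-pullback}, whose proof is deferred to \cite{FMS}.
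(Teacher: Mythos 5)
Your proposal is correct and follows essentially the same route as the paper, which gives no separate proof but justifies the proposition exactly by combining Theorem \ref{thm:ssw} (\ref{item:trans-pullback}) (via Theorem \ref{thm:generic-pullback}), the relation \eqref{eq:ssu_su} $\su(Z\se M)=\ssu(Z\se M)\cdot w(M)$, and the definition \eqref{eq:sweta} of $\su_\eta$, with the Nash--Tognoli remark disposing of the algebraicity hypothesis on compact $M$. Your bookkeeping check that the factor $(1+t_1+t_2+\cdots)$ specializes to $w(M)$ is precisely the intended content of the definition, so nothing further is needed.
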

If $N=\R^{n+l}$ or more generally if $f^*TN$ is trivial, then $\su_{\eta}$ can be expressed solely in terms of $t_i$, the Stiefel-Whitney classes of the source. Using that $w(f)=1/w(M)$, and setting
\begin{equation}\label{eq:swhateta}
	\hat\su_{\eta}:=\su_{\eta}(w_i\mapsto \bar t_i)\in \F_2[[\ttt]],
\end{equation}
where $\bar t_i$ are the formal inverse classes:
\[ 1+\bar t_1+ \bar t_2+\cdots:=\frac{1}{1+t_1+t_2+\cdots},\]
we obtain
\[
 \su(\eta(f))=\hat\su_\eta\big(t_i\mapsto w_i(M);i=1,\dots,n\big).
\]
For some explicit computations of $\su_\eta$ and $\hat{\su}_\eta$, see Example \ref{ex:char_series} below as well as Appendix \ref{app:charseries}.
\subsection{Euler characteristics of singularity loci}\label{sec:eulerchar}
Using the relationship \eqref{eq:su} between the Stiefel-Whitney class and the Euler characteristic, we obtain that if $\eta$ is a contact singularity of relative codimension $l$, then
\begin{equation}\label{eq:what-euler}
  \chi(\eta(f))=\int\limits_M\hat\su_\eta\big(t_i\mapsto w_i(M)\big)
\end{equation}
for $i\leq n$, and any generic map $f:M^n\to \R^{n+l}$ with $M$ compact of codimension $l$. This equation shows that the Euler characteristic of the $\eta$-locus can be expressed in terms of Stiefel-Whitney numbers, implying that it is invariant under cobordism. To emphasize this, and to make calculations easier we introduce the \emph{characteristic series} $\chi_\eta: \mathcal{N}_*\to \Z_2$, where $\mathcal{N}_*$ denotes the unoriented cobordism ring. Since Stiefel-Whitney numbers determine the unoriented cobordism class of a manifold, we have a surjective map
\[ \F_2[\tatata]\to \Hom(\mathcal{N}_*,\Z_2),\]
where a monomial $\tau^I$ in the variables $\tau_i$ represents the corresponding Stiefel-Whitney number $w_I[M]$ of $M$. For example
\[\chi_{\bar A_0(l)}=1+\tau_1+\tau_2+\tau_3+\tau_4+\tau_5+\tau_6+\cdots=1+\tau_2+\tau_4+\tau_6+\cdots \]

Notice that these expressions for the characteristic series are not unique: they are only determined up to the universal relations between the Stiefel-Whitney numbers.
\begin{example} \label{ex:char_series} For the (open) singularity locus $A_2(0)$ we can consult Rim\'anyi's table \cite{tpp}. Taking the SSM class modulo 2 by Theorem \ref{thm:ssw} we obtain the Segre-SW class:
\begin{equation}\label{eq:A2ssw}
	\begin{split}
		\ssu_{A_2(0)}=&\,(w_2+w_1^2)+(w_1^3+w_3)+(w_2^2+w_4)+(w_2^2w_1+w_5)+\\
		&\,(w_2^3+w_6+w_1^6+w_4w_2+w_3w_1^3+w_2w_1^4+w_3w_2w_1+w_4w_1^2)+\ldots
	\end{split}
\end{equation}
and after applying the transformations \eqref{eq:sweta} and \eqref{eq:swhateta}, we obtain
\begin{equation}\label{eq:A2su}
	\begin{split}
		\hat\su_{A_2(0)}=&\,t_2+(t_3+t_2t_1)+(t_4+t_1^2t_2+t_3t_1+t_2^2)+(t_5+t_4t_1+t_3t_1^2+t_2t_1^3)\\
		&(t_2t_1^4+t_2^2t_1^2+t_2^3 +t_3t_1^3+t_3t_2t_1+t_4t_1^2+t_4t_2+t_5t_1+t_6)+\ldots
	\end{split}
\end{equation}
This implies that
\begin{equation}\label{eq:A2chi}
	\begin{split}
		\chi_{A_2(0)}=&\,\tau_2+(\tau_3+\tau_2\tau_1)+(\tau_4+\tau_1^2\tau_2+\tau_3\tau_1+\tau_2^2)+(\tau_5+\tau_4\tau_1+\tau_3\tau_1^2+\tau_2\tau_1^3)\\
		&(\tau_2\tau_1^4+\tau_2^2\tau_1^2+\tau_2^3 +\tau_3\tau_1^3+\tau_3\tau_2\tau_1+\tau_4\tau_1^2+\tau_4\tau_2+\tau_5\tau_1+\tau_6)+\ldots
	\end{split}
\end{equation}

Using the Dold relations between characteristic numbers (see Appendix \ref{app:charseries}) we can simplify the expression:
\begin{equation}\label{eq:chia2}
\chi_{A_2(0)}=\,\tau_2+0+(\tau_4+\tau_1^2\tau_2+\tau_2^2)+0+(\tau_2\tau_4)+\ldots
\end{equation}
For example, this means that the Euler characteristic of the (open) cusp locus $A_2(M^6\to \R^6)$ is equal to the Stiefel-Whitney number $\int\limits_M w_2w_4$.
\end{example}

For similar results for the closed singularity loci, we can sum the characteristic series 
computed in \eqref{eq:chi_singularity}: 
\begin{theorem}\label{thm:closure-cusp} The characteristic series for the following singularities of equidimensional maps is
\begin{equation}\label{eq:A2closedM4R4}
		\begin{split}
 \chi_{\clos{A_2}(0)}&=\tau_2+0+\tau_4+0+(\tau_{4}\tau_2+\tau_3\tau_2\tau_1+\tau_6)+0+(\tau_4\tau_2^2 + \tau_6\tau_2 + \tau_4^2 + \tau_8)+\ldots\\
	\chi_{\clos{A_3}(0)}&=0+\tau_2^2+0+(\tau_3\tau_2\tau_1+\tau_6)+0+(\tau_2^4+\tau_4\tau_2^2+\tau_5\tau_3)+\ldots\\
	\chi_{\clos{A_4}(0)}&= 0 + 0 + \tau_3^2 + 0 + (\tau_6\tau_2+\tau_4\tau_2^2)+\ldots\\
	\chi_{\clos{I_{2,2}(0)}}&=\tau_2^2+0+(\tau_6+\tau_4\tau_2)+0+(\tau_2^2\tau_1^4+\tau_2^3\tau_1^2+\tau_2^4+\tau_6\tau_2)+\ldots \\
	\chi_{\overline{I_{2,3}}(0)}&=0+(\tau_6+\tau_4\tau_2)+0+(\tau_5\tau_3+\tau_4\tau_2^2)\\
	\chi_{\clos{I_{2,4}(0)}}&=(\tau_3^2+\tau_4\tau_2)+0+\tau_4^2\ldots \\
	\chi_{\overline{I_{2,5}}(0)}&=\tau_5\tau_2+0+\ldots
\end{split}
\end{equation}
\end{theorem}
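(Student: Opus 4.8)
All seven identities follow from a single principle, additivity of the Stiefel--Whitney transformation. Since $\su$ is motivic, for a closed contact singularity $\clos\eta$ with open $\mathcal{K}$-stratification $\clos\eta=\bigsqcup_\xi\xi$ one has $\su_{\clos\eta}=\sum_\xi\su_\xi$, where $\xi$ runs over the contact types with $\xi\subset\clos\eta$; only finitely many contribute in each fixed cohomological degree, since $\hat\su_\xi$ vanishes below degree $\codim\xi$. The map $\ssu_\eta\mapsto\su_\eta$ of \eqref{eq:sweta} is multiplication by the fixed series $1+t_1+t_2+\cdots$, the map $\su_\eta\mapsto\hat\su_\eta$ of \eqref{eq:swhateta} is a ring homomorphism, and $t_i\mapsto\tau_i$ is $\F_2$-linear; all three commute with the sum, so that
\[ \chi_{\clos\eta}\;=\;\sum_{\xi\subset\clos\eta}\chi_\xi \]
in $\F_2[[\tatata]]$ (equivalently, in $\Hom(\mathcal{N}_*,\Z_2)$), each $\chi_\xi$ for an \emph{open} stratum $\xi$ being one of the series recorded in \eqref{eq:chi_singularity} --- obtained, as in Example~\ref{ex:char_series}, from Rim\'anyi's Segre--SM Thom polynomials \cite{tpp} via the reduction mod $2$ of Theorem~\ref{thm:ssw}\eqref{item:bh} and the transformations above.

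\textbf{Carrying it out.} I would proceed in three steps, for each of $A_2,A_3,A_4,I_{2,2},I_{2,3},I_{2,4},I_{2,5}$ (all with $l=0$). First, read off from the adjacency poset of $\mathcal{K}$-classes of equidimensional germs (\cite{AVGL}, \cite{tpp}) the finite list of contact types occurring in the closure of $\eta$ up to the $\mathcal{K}$-codimension bounding the displayed range (at most $8$): for the Morin series this is $\clos{A_k}(0)=A_k(0)\sqcup A_{k+1}(0)\sqcup\cdots$ together with the lowest corank-$\geq2$ degenerations (the $I_{p,q}$-type strata) it acquires, and $\clos{I_{2,k}}(0)=I_{2,k}(0)\sqcup I_{2,k+1}(0)\sqcup\cdots$ together with its own degenerations. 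Second, substitute the series $\chi_\xi$ from \eqref{eq:chi_singularity} for each contributing $\xi$ and add them graded piece by graded piece. Third, reduce the result modulo the Dold relations among Stiefel--Whitney numbers (Appendix~\ref{app:charseries}) to obtain the stated forms; this step accounts for the many vanishing slots in the displayed series. The leading term $\tau_2$ and the $\tau_4$ of $\chi_{\clos{A_2}(0)}$ recover, respectively, the Thom polynomial $\tp_{A_2}=s_{1,1}$ and the Saeki--Sakuma identity \eqref{eq:chiA2_intro}.

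\textbf{The hard part.} The genuine difficulty is combinatorial and lies in the first step: pinning down the adjacency poset precisely --- which strata lie in which closure (each with multiplicity one, the strata being connected orbits), and in particular how the corank-$\geq2$ locus (the $\Sigma^2$-type and $I_{p,q}$ strata) sits inside $\clos{A_2}$ and $\clos{A_3}$ --- so that no degeneration of $\mathcal{K}$-codimension $\leq8$ is overlooked. Once this input is fixed the remainder is a bounded, routine computation entirely parallel to Example~\ref{ex:char_series}; it does, however, require that Rim\'anyi's table \cite{tpp} reach far enough for each of the $I_{2,k}$, which it does in the asserted range. (One could instead sidestep the stratification by directly reducing mod $2$ the Segre--SM Thom polynomials of the \emph{closed} loci where these are tabulated, but the additive route above is the most economical.)
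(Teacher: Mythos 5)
Your proposal matches the paper's own (very terse) argument: the paper obtains Theorem \ref{thm:closure-cusp} precisely by summing, via the motivic/additivity property, the characteristic series of the open strata contained in each closure, with those series computed as in Example \ref{ex:char_series} and Appendix \ref{app:charseries} from Rim\'anyi's Segre--SM tables \cite{tpp} through the mod 2 reduction of Theorem \ref{thm:ssw}\eqref{item:bh} and simplified by the Dold relations. So your route is essentially identical, including the correct identification of the adjacency data as the only nontrivial input.
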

Note that each term in these expressions describes a geometric problem. For instance, Saeki and Sakuma's theorem (discussed in Section \ref{subsec:A2eulerchar}) about the Euler characteristic of the $\clos{A_2}$-locus of maps $M^4\to\R^4$ corresponds to the term $\tau_4$ in $\chi_{\clos{A_2}(0)}$.
\begin{remark}
	One can extend these formulas to maps $f:M\to N$ with non-parallelizable target $N$. To do so, one can use the formulas \eqref{eq:ssu_singularity} describing the Segre classes of the singularity locus, and integrate the corresponding Stiefel-Whitney classes given by the relation \eqref{eq:ssu_su}. The resulting formulas are more complicated, as they now involve two sets of variables $w_i$ and $t_i$.
\end{remark}
We give a sample computation of Segre classes of more complicated singularity loci.
\begin{example}\label{ex:Grassmannian}
	We compute the Segre classes of some singularity loci $\eta$ of Thom-Boardman type $\Si^2$. In particular, these computations imply that there is no Morin map $\Gr_2(\R^6)\to\R^8$. The $\ssm$ classes of singularity loci in codimension 0 were computed by Rim\'anyi \cite{tpp} up to degree 8. By Theorem \ref{thm:ssw} \eqref{item:bh}, the $\ssu$ classes are the same mod 2, so the next step is to compute $w(\Gr_2(\R^6))$. The Stiefel-Whitney classes of $\Gr_2(6)$ can be computed using standard methods of Schubert calculus:
	\[
	w(\Gr_2(\R^6))=1+\si_{2,2}+\si_{3,1}+\si_{3,3}+\si_4+\si_{4,2}+\si_{4,4}
	\]
	where $\si_\la$ denotes the mod 2 fundamental class of the Schubert variety (cf.\ \cite{BorelHaefliger1961}).
	Substituting this into the respective formulas \cite{tpp} or \eqref{eq:ssu_singularity} (caution, the Chern classes $c_i$ on \cite{tpp} denote inverse Chern classes), we obtain the $\ssu$ classes of the (open) singularity loci:
	\[
	\ssu_{I_{22}}=0,\qquad 	\ssu_{I_{23}}=\si_{4,4},\qquad \ssu_{I_{33}}=0,\]
	\[\ssu_{I_{25}}= 0,\qquad \ssu_{I_{34}}= \si_{4,4},\qquad \ssu_{(x^2,y^3)}=\si_{4,4},\]
	\[\ssu_{I_{26}}= \si_{4,4},\qquad
	\ssu_{I_{35}}= 0,\qquad
	\ssu_{I_{44}}= \si_{4,4},\qquad  \ssu_{(x^2+y^3,xy^2)}=\si_{4,4}.
	\]
	There are some interesting phenomena to note. Since $\su_{\eta}(f)=w(\Gr_2(6))\cdot \ssu_{\eta}(f)$, and since for all of these above singularities, the Segre-SW class is either $\si_{4,4}$ or 0, we computed the mod 2 Euler characteristics of these (open) singularity loci; they are 0 whenever $\ssu=0$, and 1 otherwise.
	
	Second, notice that the sum of the $\ssu$ classes of these singularity loci is 0, i.e.\ $\ssu_{\Si^2(0)}(f)=0$. This can also be directly obtained from the formulas of Parusi\'nski-Pragacz. Since the $\ssu$ class of $\Si^2(0)$ vanishes, it provides no obstruction to the existence of a Morin map of codimension 0. On the other hand, the Thom polynomials of the singularity loci (e.g. $\tp_{I_{44}}= \si_{4,4}$ etc.) do provide such an obstruction.
	
\end{example}
\section{Further examples and observations}\label{sec:hierarchy}
In this section, we consider some further results that can be obtained using the techniques of this paper. Most of these examples are motivated by the following question: what is the minimal set of geometric elements from $\A_\eta$ that one has to compute, in order to conclude that $\A_\eta$ vanishes? We also consider the relationship of $\ssu$ with Steenrod operations.
\subsection{Geometric obstructions in the avoiding ideal}

In some (rare) cases, the Thom polynomial of $\eta$ is the only obstruction for the existence of an $\eta$-map $f:M\to \R^n$. For instance, a manifold of dimension $4k+2$ admits a fold map into $\R^4$ if and only if its Thom polynomial $\tp_{A_2(4k-2)}=t_{4k}$ vanishes by a theorem of Sadykov, Saeki and Sakuma \cite[Theorem 4.6]{SadykovSaekiSakuma2010}.

However, this is not the case in general: the following example illustrates that Stiefel-Whitney classes occasionally provide sharper estimates than the Thom polynomials of more complicated singularities (cf.\ Proposition \ref{prop:avoiding_complicated}).
\begin{example}\label{ex:rp4rp6}
	We show that any map $f:\RP^4\times \RP^6\to \R^{11}$ has $A_2$ points. We have
	\[w(\RP^4\times \RP^6)=(1+x+x^4)(1+y+y^2+y^3+y^4+y^5+y^6)\]
	where $H^*(\RP^4\times \RP^6)=\F_2[x,y]/(x^5,y^7)$. Its inverse is
	\[\bar w = 1 + (x+y)+(x^2+xy)+(x^3+x^2y)+x^3y\]
	Substituting these classes into $\ssu_{A_2(1)}$ (see \cite{tpp}) the Segre-SW class of the $A_2$-locus is
	\[\ssu_{A_2(1)}(f)=x^4y^4.\]
	In fact, the Thom polynomials of the other singularities in $\A_{A_2(1)}$ vanish. In degrees $d\leq 10$, these are (for their Thom polynomials, see \cite{tpp} and the references therein):
	\[
	A_2(1), A_3(1), III_{2,2}(1), A_4(1), III_{2,3}(1), A_2(2), A_2(3), A_2(4), A_3(2),I_{2,2}(2).
	\]
	In conclusion, $(\ssu_{A_2(1)})_{+4}\in \A_{A_2(1)}$ is an obstruction not generated by the Thom polynomials of more complicated singularities.
\end{example}

The following example illustrates that sometimes even though $\ssu_{\eta}(f)=0$, $\A_\eta(f)$ can be nonzero.
\begin{example}\label{ex:DoldWu}
	Let $M=\operatorname{SU}(3)/\operatorname{SO}(3)$, which is a 5-dimensional simply connected manifold also known as the Wu manifold \cite{Barden}. We show that $M$ has no Morin map to $\R^6$. Its cohomology ring is
	\[H^*(M;\F_2)=\Lambda[w_2(M),w_3(M)],\]
	see \cite[Lemma 1.1, 1.2]{Barden}, \cite[Theorem 6.7.2]{MimuraToda}, so its inverse Stiefel-Whitney class is $\bar{w}(M)=1+w_2+w_3$. Let $f:M\to \R^6$ be a generic smooth map. The Thom polynomial $[A_2(1)](f)=\bar w_2^2+\bar w_1\bar w_3\in H^4(M)$ vanishes, since $M$ is simply connected.  The $A_2(1)$-locus is smooth: its singular points are the $\operatorname{III}_{2,2}(1)$-points, which are 6 codimensional, and therefore empty. So the set of $A_2(1)$ points is either empty, or a smooth 1-dimensional submanifold, so its Euler characteristic also vanishes. As we saw in the previous example the avoiding ideal of $A_2(1)$ also contains the element $w_2w_3+w_1w_4$ which has nonzero value on $f$. Therefore any map of the Wu manifold to $\R^6$ has $A_2$-points. As we have remarked in Corollary \ref{cor:a2-si2}, $\A_{A_2(1)}=\A_{\Si^2(0)}$, so there is a corresponding result on Morin maps: any map of the Wu manifold to $\R^5$ has $\Si^2$ points.
	
It is possible to identify the obstruction $w_2w_3+w_1w_4$ as the pushforward of $w_1(\ker df)$, where $\ker df$ is the line bundle of the kernels of the derivative of $f$ over the $A_2(1)$-locus. This calculation will be published elsewhere.
\end{example}

In the following example, we illustrate the simple fact that since the Euler characteristic $\chi(\eta(f))$ is equal to a Stiefel-Whitney number of $M$, it is an invariant of the cobordism class $[M]$ (cf.\ Section \ref{sec:eulerchar}), however the Thom polynomial $\tp_\eta(f)$ is not.

\begin{example}\label{ex:Dold5}
	Consider maps $f:\RP^5\to \R^6$. Since $\bar{w}(\RP^5)=1+x^2$, the Thom polynomial is equal to $\tp_{\Si^1}(f)=x^2$ which is nonzero. Since $\RP^5$ and $(S^1)^{\times 5}$ are both cobordant to zero, and $\chi(\Si^1(f))$ is a Stiefel-Whitney number, $\chi(\Si^1(f))=0$ for both spaces.
	
	In fact, one can show using the Smale-Hirsch theorem that $(S^1)^{\times 5}$ can be immersed to $\R^6$.
\end{example}

Finally, we remark that there exist more complicated obstructions than the avoiding ideal. For instance, for $\RP^{N}\to \R^{N+l}$, $N=2^n-1$ the avoiding ideal is zero $\A_\eta=(0)$ for any singularity $\eta$, since $w(\RP^N)=1$. On the other hand, $\RP^{15}$ does not immerse to $\R^{21}$, nor does $\RP^{31}$ immerse to $\R^{52}$ by \cite{James1963}. See \cite{Davis} for an extensive overview on immersion results of $\RP^n$ to Euclidean space.

\subsection{Stiefel-Whitney classes and Steenrod operations}

	Some Segre-SW classes of $\eta$ always vanish whenever their Thom polynomial vanishes, so they do not provide additional obstructions.  
\begin{proposition}\label{prop:sq1}
		If $Z\se X$ be a closed subvariety with singular set $S$ of codimension $\geq 2$, then $\ssu_{+1}(Z)=\Sq^1[Z]$.
\end{proposition}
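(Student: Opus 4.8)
The plan is to localize away from the singular set and, on the smooth locus, identify $\ssu_{+1}(Z)$ with a pushed-forward normal Stiefel--Whitney class, which is precisely the quantity that $\Sq^1$ of a fundamental class computes. Write $c=\codim_X Z$. By \eqref{eq:ssu_su} (and $w(X)=1+\cdots$) the lowest degree term of $\ssu(Z\se X)$ is $[Z]\in H^c(X;\F_2)$, and $\ssu_{+1}(Z)$ is its degree $c+1$ component; I read the hypothesis as $\codim_X S\ge c+2$, i.e.\ $S$ has codimension $\ge 2$ inside $Z$.

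First I would settle the smooth closed case. Set $U=X\sub S$ and $W=Z\cap U$, the smooth locus of $Z$; this is a smooth closed submanifold of the smooth manifold $U$, with normal bundle $\nu$ of rank $c$ and inclusion $i\colon W\hookrightarrow U$. Using the normalization $\su(W\se W)=w(TW)$ (Remark \ref{rmk:notation}) and the functoriality of the Stiefel--Whitney transformation under the proper inclusion $i$ (the Deligne--Grothendieck axioms, \cite{fu-mccrory}), one gets $\su(W\se U)=i_*w(TW)$ for the Gysin pushforward $i_*$. Dividing by $w(U)=w(TU)$ as in \eqref{eq:ssu_su}, applying the projection formula and $w(TU|_W)=w(TW)w(\nu)$, this yields
\[
\ssu(W\se U)=i_*\!\left(\frac{1}{w(\nu)}\right)=[W]+i_*w_1(\nu)+\cdots ,
\]
so $\ssu_{+1}(W\se U)=i_*w_1(\nu)$. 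On the other hand, Thom's formula $\Sq(u)=w(\nu)\cdot u$ for the Thom class $u$ of $\nu$, together with the Cartan formula, gives the Wu-type identity $\Sq(i_*x)=i_*\big(\Sq(x)\cdot w(\nu)\big)$; taking $x=1$ and reading off degree $c+1$ gives $\Sq^1[W]=\Sq^1 i_*(1)=i_*w_1(\nu)$. Hence $\ssu_{+1}(W\se U)=\Sq^1[W]$ in $H^{c+1}(U;\F_2)$.

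Second, I would transfer this back to $X$. The transformation $\su$ commutes with restriction to the open set $U$ (naturality for open inclusions), and since $w(X)|_U=w(U)$ the same holds for $\ssu=\su/w$; as $\mathbbm{1}_Z|_U=\mathbbm{1}_W$, this gives $\ssu(Z\se X)|_U=\ssu(W\se U)$, while obviously $[Z]|_U=[W]$ and $\Sq^1$ commutes with restriction. Therefore $\ssu_{+1}(Z)$ and $\Sq^1[Z]$ have the same image in $H^{c+1}(U;\F_2)$. Finally, because $X$ is a manifold and $\codim_X S\ge c+2$, the restriction $H^{c+1}(X;\F_2)\to H^{c+1}(U;\F_2)$ is injective: $H^{c+1}(X,U;\F_2)$ is Poincaré--Alexander--Lefschetz dual to $H^{BM}_{\dim X-c-1}(S;\F_2)$, which vanishes because $\dim X-c-1>\dim S$. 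Injectivity upgrades the equality over $U$ to the desired identity $\ssu_{+1}(Z)=\Sq^1[Z]$ in $H^{c+1}(X;\F_2)$.

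The step I expect to need the most care is the last paragraph: getting the cohomological degree bookkeeping exactly right, so that the assumption ``$S$ of codimension $\ge 2$ in $Z$'' is seen to be precisely what makes $H^{c+1}(X)\to H^{c+1}(U)$ injective (equivalently, the motivic correction $\ssu(S\se X)$ is concentrated in degrees $\ge c+2$ and is invisible in the range that matters). The other inputs — the identity $\su(W\se U)=i_*w(TW)$ for smooth closed $W$, which may also be routed through $\csm$ and Theorem \ref{thm:bsy}, and Thom's computation of the Steenrod squares of a Thom class — are classical, and I would only cite them.
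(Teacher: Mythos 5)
Your argument is correct, but it takes a genuinely different route from the paper. The paper resolves the singularities: it takes a resolution $f:Y\to X$ of $Z$, uses $f_!w(Y)=\su(Z_1)$ together with the decomposition $\su(Z)=\su(U)+\su(S)$ (with $U$ the smooth part of $Z$) to get $\su_{+1}(Z)=f_!w_1(Y)$, and then invokes the Atiyah--Hirzebruch Riemann--Roch theorem for Steenrod squares applied to $f$, which yields $\su_{+1}(Z)=\Sq^1[Z]+w_1(X)\cdot[Z]$; dividing by $w(X)$ cancels the $w_1(X)[Z]$ term. You instead never leave the smooth category: you restrict to the open set $U=X\sub S$, where the smooth locus $W=Z\cap U$ is an honest closed submanifold, compute $\ssu_{+1}(W\se U)=i_*w_1(\nu)$ via the normalization $\su(W\se W)=w(TW)$, proper functoriality and the projection formula, identify this with $\Sq^1[W]$ by Thom's formula for $\Sq$ of a Thom class (which is exactly the embedded special case of the Atiyah--Hirzebruch theorem the paper cites), and then upgrade the equality from $U$ to $X$ by the injectivity of $H^{c+1}(X;\F_2)\to H^{c+1}(U;\F_2)$, which your Alexander-duality count shows is precisely what the hypothesis $\codim_Z S\ge 2$ buys. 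Both proofs hinge on the same underlying observation that the singular set is invisible in degree $c+1$; the paper's resolution argument packages this through the constructible-function identity $f_!w(Y)=\su(U)+\su(S_1)$ and generalizes naturally if one wants to push to higher degrees with control on the fibers of $f$, while your version avoids resolution of singularities and the Riemann--Roch theorem for singular images altogether, at the price of being tailored to the degree-$(c+1)$ statement (the restriction map is no longer injective in higher degrees without strengthening the codimension hypothesis). Two small points to make explicit if you write this up: the locality of $\su$ (hence of $\ssu$, since $w(X)|_U=w(U)$) under open restriction should be cited from the characteristic-cycle/constructible-function formulation, and the duality $H^{c+1}(X,U;\F_2)\cong H^{BM}_{\dim X-c-1}(S;\F_2)$ needs $S$ closed and triangulable, which holds here since $S$ is a subvariety.
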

\begin{proof}
Let $f:Y\to X$ be a resolution of $Z$. Then $f_!w(Y)=\su(Z_1)$, where 
		$$Z_1:=\{z\in Z:\chi_2(f^{-1}(z))=1\}.$$
Since $f$ is a resolution, the smooth part $U\se Z$ is contained in $Z_1$. The singular set decomposes as $S=S_0\amalg S_1$ and
      $$f_!w(Y)=\su(U)+\su(S_1).$$
If the singular set $S$ has codimension $\geq 2$ in $Z$, then
		$$ \su_{+1}(Z)=\su_{+1}(U)=f_!w_1(Y)=\Sq^1[Z]+w_1(X)\cdot [Z],$$
where the last statement follows from Atiyah and Hirzebruch's theorem \cite[Satz 3.2]{AtiyahHirzebruch} (a Grothendieck-Riemann-Roch type theorem for $\Sq$): $f_!(1/w(Y))\cdot w(X)=\Sq[Z]$. The Segre-Stiefel-Whitney class is the degree $+1$ part of
		$$ \ssu_{+1}(Z)=\left[([Z]+\Sq^1[Z]+w_1(X)\cdot [Z]+\ldots)(1+w_1(X)+\ldots)\right]_{+1}=\Sq^1[Z]$$
\end{proof}
	For instance, the Schubert varieties of a flag variety have singular sets of codimension $\geq 2$, so they all satisfy the condition of the Proposition. The same holds for the $\clos{\Si}^i$-loci. The conclusion is not always satisfied:
	\begin{example}
		By \cite{tpp} one has $$\ssu(\clos{A_3(0)})=s_{1,1,1}+s_{2,1}+s_{1,1,1,1}+s_{2,1,1}+s_{2,2}+\ldots,$$
		however $\Sq^1(s_{1,1,1}+s_{2,1})=s_{1,1,1,1}+s_{2,1,1}$. Indeed, the singular set of $\clos{A_3(0)}$ is $\clos{I_{2,2}(0)}$ which is 1 codimensional.

In fact, this calculation shows that $\clos{A_3(0)}$ is not smooth at the $I_{2,2}(0)\cup II_{2,2}(0)$ points. This is well known as the geometry is classically well understood. But it shows how to use $\ssu-\Sq^1$ as an obstruction for smoothness in codimension 1.
\end{example}
	
In some particular $i>1$ cases $(\ssu_{\eta})_{+i}$ vanishes, whenever $\tp_\eta$ vanishes.
	\begin{example}\label{ex:hierarchy}
		Whenever $\tp_{A_2(1)}=s_{2,2}$ vanishes, $(\ssu_{A_2(1)})_{+2}$ also vanishes. Indeed,
		\[\Sq^2(s_{2,2})=s_{2,2,1,1} + s_{2,2,2} + s_{3,2,1} + s_{3,3} + s_{4,2}\]
		which can be computed using methods of \cite{Lenart1998} or by applying the splitting principle to $H^*(BGL(n,\R))$. On the other hand, $(\ssu_{A_2(1)})_{+2}$ is equal by \cite{tpp} to
		\[
		(\ssu_{A_2(1)})_{+2} = s_{3,3}+s_{2,2,1,1} = \Sq^2(\tp_{A_2(1)}) + s_2\cdot \tp_{A_2(1)}
		\]
		So the vanishing of the Thom polynomial implies vanishing of $\ssu_{+2}$. This is not true for the higher $\ssu$ classes of $A_2(1)$, e.g.\ see Example \ref{ex:rp4rp6} with $\ssu_{+4}(A_2(1))$.
	\end{example}

\appendix

\section{Computations: Stiefel-Whitney and characteristic series}
\label{app:charseries}

We summarize computations of  Segre-Stiefel-Whitney classes $\ssu_\eta$ and characteristic series $\chi_\eta$ of some singularities $\eta$. For $l=0$, the restriction equations only work up to degree 8 (a moduli of contact orbits appears in codimension 9 singularities). To keep the formulas relatively compact, we only write out the computations up to degree 6. 

First, using Theorem \ref{thm:PP} of Parusi\'nski-Pragacz and Theorem \ref{thm:ssw} (5), we write the Segre-SW class of the first two (open) $\Si^i$'s in codimension 0, up to degree 6:
\begin{equation}
		\begin{split}
	\ssu_{\Si^1(0)}=&\, w_1+w_1^2+w_1^3+w_1^4+(w_1^5+w_2^2w_1+w_3w_1^2)+(w_1^6+w_2^2w_1^2+w_3w_1^3)+\ldots
	\\
		\ssu_{\Si^2(0)}=&\,(w_2^2+w_3w_1)+0+(w_2^2w_1^2+w_3w_1^3+w_3w_2w_1+w_3^2+w_4w_1^2+w_5w_1)+\ldots
%
\end{split}	
\end{equation}
and after rewriting in terms of tangent classes $t_i$ and multiplying with $(1+t_1+t_2+\ldots)$, we obtain the Stiefel-Whitney series defined in \eqref{eq:swhateta}:
\begin{equation}\label{eq:su_degeneracy}
	\begin{split}
		\hat{\su}_{\Si^1(0)}=&\,t_{1} + 0 + t_{1} t_{2} + (t_{1}^{2} t_{2} + t_{1} t_{3})+
		(t_{1}^{3} t_{2} + t_{1} t_{2}^{2} + t_{1} t_{4})+ (t_{1}^{4} t_{2} + t_{1}^{3} t_{3} + t_{1}^{2} t_{4} + t_{1} t_{5})+\ldots\\
		\hat{\su}_{\Si^2(0)}=&\,(t_{1} t_{3} + t_{2}^{2})+(t_{1}^{2} t_{3} + t_{1} t_{2}^{2})+(t_{1}^{3} t_{3} + t_{1}^{2} t_{2}^{2} + t_{1}^{2} t_{4} + t_{1} t_{5} + t_{2}^{3} + t_{3}^{2})+\ldots
	\end{split}	
\end{equation}
After rewriting the $\hat{\su}$-series into characteristic numbers $t_i\mapsto \tau_i$ as described in Section \ref{sec:eulerchar}, we can make further simplifications using the relations between the characteristic numbers. Let us list some of the relations between characteristic numbers (these can be verified by computing the characteristic numbers of the generators of unoriented cobordism or by methods of \cite{Dold1956}):
\[	t_I=\,0, \qquad |I|=1,3,5,7\text{ except for } t_I=t_3t_2\text{ and } t_5t_2=t_4t_2t_1=t_3t_2t_1^2\]
\[
t_2=\,t_1^2,\qquad t_3t_1=\,0,\qquad t_2t_1^2=\,t_1^4\]
\[a:=\,t_6=t_5t_1=t_4t_1^2=t_2^2t_1^2,\qquad b:=\,t_2^3=t_3^2=t_3t_2t_1,\qquad c:=\,t_1^4t_2=t_1^3t_3=t_4t_2=t_1^6,\]
Note that these are linear relations in the vector space of monomials, i.e.\ $t_3t_1=0$ does not imply $t_3t_2t_1=0$.
Using these relations, \eqref{eq:su_degeneracy} yields the following characteristic series:
\begin{equation}\label{eq:chi_degeneracy}
	\begin{split}	
		\chi_{\Si^1(0)}^*=\,&0+0+0+\tau_1^4+0+0+\ldots\\
		\chi_{\Si^2(0)}^*=\,&\tau_2^2+0 +0+\ldots\\
	\end{split}
\end{equation}


Passing to contact singularity loci, we can use Rim\'anyi's results \cite{tpp} together with Theorem \ref{thm:ssw} (5). We can do the same transformations as above for (open) singularity loci:
\begin{equation}\label{eq:ssu_singularity}
	\begin{split}
		\ssu_{A_2(0)}=&\,(w_2+w_1^2)+(w_3+w_1^3)+(w_4+w_2^2)+(w_5+w_2^2w_1)\\
		&+w_6+w_4w_2+w_2^3+w_3w_2w_1+w_4w_1^2+w_3w_1^3+w_2w_1^4+w_1^6+\ldots
		\\
		\ssu_{A_3(0)}=&\,(w_2w_1+w_1^3)+0+(w_4w_1+w_2^2w_1)+(w_3^2+w_4w_2+w_2^3+w_4w_1^2)+\ldots\\
		\ssu_{A_4(0)}=&\,(w_1^4 + w_1w_3) + (w_1^5 + w_1^2w_3) + (w_1^6 + w_1w_2w_3 + w_1w_5 + w_2^3 + w_2w_4 + w_3^2)\ldots\\
		\ssu_{\operatorname{I}_{22}(0)}=&\, (w_1w_3 + w_2^2)+0+(w_1^2w_4 + w_1w_5 + w_2^3 + w_2w_4 )+ \ldots\\
		\ssu_{A_5(0)}=&\,	(w_1^5 + w_1^2w_3) +
		(w_1^4w_2 + w_1^3w_3 + w_1^2w_2^2 + w_1^2w_4) +\ldots
	\end{split}
\end{equation}
and after rewriting in terms of tangent classes and multiplying with $(1+t_1+t_2+\ldots)$ we obtain the Stiefel-Whitney series defined in \eqref{eq:swhateta}:
\begin{equation}\label{eq:su_singularity}
	\begin{split}
		\hat{\su}_{A_2(0)}=&\,t_2+(t_3+t_2t_1)+(t_4+t_1^2t_2+t_3t_1+t_2^2)+(t_5+t_4t_1+t_3t_1^2+t_2t_1^3)\\
		&(t_2t_1^4+t_2^2t_1^2+t_2^3 +t_3t_1^3+t_3t_2t_1+t_4t_1^2+t_4t_2+t_5t_1+t_6)+\ldots\\
		\hat{\su}_{A_3(0)}=&\,t_2t_1+t_2t_1^2+(t_2t_1^3+t_2^2t_1+t_4t_1)+(t_2t_1^4+t_3t_2t_1+t_3^2+t_4t_1^2+t_4t_2)+\ldots\\
		\hat{\su}_{A_4(0)}=&\,t_3t_1+0+(t_3^2+t_4t_1^2+t_4t_2+t_5t_1+t_1^3t_3)+\ldots	\\
		\hat{\su}_{\operatorname{I}_{22}(0)}=&\,	(t_{1} t_{3} + t_{2}^{2})+
		(t_{1}^{2} t_{3} + t_{1} t_{2}^{2})+
		(t_{1}^{3} t_{3} + t_{1}^{2} t_{2}^{2} + t_{1} t_{2} t_{3} + t_{1} t_{5} + t_{2}^{3} + t_{2} t_{4})+\ldots\\
		\hat{\su}_{A_5(0)}=&\,t_1^2t_3+t_1^2t_4 + \ldots
	\end{split}
\end{equation}
Using the relations between characteristic numbers, we can write the characteristic series as
\begin{equation}\label{eq:chi_singularity}
	\begin{split}	
		\chi_{A_2(0)}=\,&\tau_2+0+(\tau_4+\tau_1^2\tau_2+\tau_2^2)+0+(\tau_2\tau_4)+\ldots\\
		\chi_{A_3(0)}=\,&0+\tau_1^4+0+\tau_6+\ldots\\
		\chi_{A_4(0)}=&\,0+0+\tau_3^2+\ldots	\\
		\chi_{\operatorname{I}_{22}(0)}=&\,\tau_2^2+0+0\ldots	\\
		\chi_{A_5(0)}=&\,0+\tau_6+\ldots	\\
	\end{split}
\end{equation}
Each of these coefficients corresponds to the Euler characteristic of the singularity locus of a generic map of an $n$-dimensional manifold to $\R^n$. For instance, $\chi^6_{A_4(0)}=\tau_3^2$ shows that the Euler characteristic of the $A_4$-points of a generic map $\RP^6\to\R^6$ is odd.


\end{document}